\documentclass[11pt,reqno]{amsart}

\usepackage{graphicx}
\usepackage{latexsym}
\usepackage{amssymb}
\usepackage{amsmath}
\usepackage{enumerate}

\usepackage{amsmath}
\usepackage{stmaryrd}
\usepackage{hyperref}
\usepackage{amssymb}
\usepackage{CJK}
\usepackage{color}

\usepackage{amsmath,latexsym,amssymb,amsthm,array,amsfonts}
\usepackage{mathtools, stmaryrd}
\usepackage{xparse} \DeclarePairedDelimiterX{\Iintv}[1]{\llbracket}{\rrbracket}{\iintvargs{#1}}
\NewDocumentCommand{\iintvargs}{>{\SplitArgument{1}{,}}m}
{\iintvargsaux#1} %
\NewDocumentCommand{\iintvargsaux}{mm} {#1\mkern1.5mu..\mkern1.5mu#2}
\usepackage{mathrsfs,dsfont,bbm}
\usepackage{csquotes}
\usepackage{layout}
\usepackage{layout}

\newtheorem{lemma}{Lemma}
\newtheorem{definition}{Definition}
\newtheorem{corollary}{Corollary}
\newtheorem{theorem}{Theorem}
\newtheorem{proposition}{Proposition}
\newtheorem{remark}{Remark}
\newtheorem{example}{Example}
\newtheorem{conjecture}{Conjecture}

\def\real{{\mathord{{\rm I\kern-2.8pt R}}}}        
\def\inte{{\mathord{{\rm I\kern-2.8pt N}}}}

\def\Dom{{\mathrm{{\rm Dom}}}}

\def\sZZ{{\rm Z\kern-2.8ptem{}Z}}

\def\z{{\mathchoice
  {\sZZ}
  {\sZZ}
  {\rm Z\kern-0.30em{}Z}
  {\rm Z\kern-0.25em{}Z} }}
\def\sQQ{{\kern 0.27em \vrule height1.45ex width0.03em depth0em
          \kern-0.30em \rm Q}}
\def\qu{{\mathchoice
    {\sQQ}
    {\sQQ}
  {\kern 0.225em \vrule height1.05ex width0.025em depth0em \kern-0.25em \rm Q}
  {\kern 0.180em \vrule height0.78ex width0.020em depth0em \kern-0.20em \rm Q}
        }}
\def\sCC{{\kern 0.27em \vrule height1.45ex width0.03em depth0em
          \kern-0.30em \rm C}}
\def\complex{{\mathchoice
    {\sCC}
    {\sCC}
  {\kern 0.225em \vrule height1.05ex width0.025em depth0em \kern-0.25em \rm C}
  {\kern 0.180em \vrule height0.78ex width0.020em depth0em \kern-0.20em \rm C}
        }}

\MakeOuterQuote{"}


\newcommand{\ba}{\begin{array}}
\newcommand{\ea}{\end{array}}
\newcommand{\be}{\begin{equation}}
\newcommand{\ee}{\end{equation}}
\newcommand{\bea}{\begin{eqnarray}}
\newcommand{\eea}{\end{eqnarray}}
\newcommand{\beaa}{\begin{eqnarray*}}
\newcommand{\eeaa}{\end{eqnarray*}}

%
%

\def\z{\zeta}

%
%
%
\font\tenmath=msbm10 \font\sevenmath=msbm7 \font\fivemath=msbm5
\newfam\mathfam \textfont\mathfam=\tenmath
\scriptfont\mathfam=\sevenmath \scriptscriptfont\mathfam=\fivemath

\def \={{\buildrel {\rm (law)} \over =}}

%
%

%

\def\qed{ \hfill \vrule width.25cm height.25cm depth0cm\smallskip}

\newcommand{\basa}{\begin{assumption}}
\newcommand{\easa}{\end{assumption}}

\newcommand{\bas}{\begin{assum}}
\newcommand{\eas}{\end{assum}}

\usepackage{amsmath,amsthm}
\usepackage{amssymb}
\DeclareMathOperator{\Hess}{Hess}

\DeclareMathOperator{\Vol}{Vol}
\DeclareMathOperator{\const}{const}
\DeclareMathOperator{\Ric}{Ric}
\DeclareMathOperator{\supp}{supp}

\DeclareMathOperator{\Tr}{Tr}

\DeclareMathOperator{\Id}{Id}


\newcommand{\ignore}[1]{}
\textwidth 6.6in \textheight 8.2in \evensidemargin -1pt
\oddsidemargin -1pt \topskip -2in \topmargin 3pt
\parindent0.3in

\begin{document}

\title{Free Stein kernels and moment maps }

\author{Charles-Philippe Diez$^{\dagger}$}
\thanks{$^{\dagger}$ University of Luxembourg, Department of Mathematics, Maison du Nombre,
6 avenue de la Fonte, L-4364 Esch-sur-Alzette, Grand Duchy of Luxembourg, \textbf{charles-philippe.diez@uni.lu}}

\begin{abstract}
In this paper, we propose a free analogue to Fathi's construction of Stein kernels using
moment maps (2019). This is possible for a class of measures called free moment measures that was introduced in the free case by Bahr and Boschert (2021), and by using the notion of free moment maps which are
convex functions, solutions of a variant of the free Monge-Ampère equation discovered by Guionnet
and Shlyakhtenko (2012). We then show how regularity estimates in some weighted non-commutative
Sobolev spaces on these maps control the transport distances to the semicircular law. We also prove
in the one dimensional case a free analogue of the moment map version of the Cafarelli contraction
theorem (2001), discovered by Klartag in the classical case (2014), and which leads to a uniform bound on the free moment Stein kernel. Finally, we discuss the applications of these results: we prove a stability
result characterizing the semicircular distribution among a certain subclass of free Gibbs measures,
the probabilistic interpretation of this free moment Stein kernel in terms of free diffusion processes, its connections with the theory of non-commutative Dirichlet forms, and a possible notion of a
non-commutative (free) Hessian manifold associated with a free Gibbs measure, conjecturally having free analogue properties of a classical Hessian manifold associated with a log-concave measure
and considered by Kolesnikov (2012), which has the striking property of having a Ricci curvature bounded from below by $1/2$.
\end{abstract}
\maketitle

\section{Introduction}
\bigbreak
Fathi, in his important paper \cite{mm}, discovered a new way to implement Stein kernels with respect to the standard Gaussian measure via ideas of optimal transport. In fact, there is a variant of the Monge-Kantorovitch problem, where, instead of specifying both measures and looking for a transport map that sends one to the other, we fix a target measure $\mu$ and look for an essentially continuous convex function $\varphi$, called the "{\it moment map}", such that $\mu$ is the pushforward of the probability measure with density $e^{-\varphi}$ by the gradient of $\varphi$, that is $\mu=(\nabla \varphi)_{\sharp} e^{-\varphi}dx$. This formal construction (which is a variant of the Minkowski problem for encoding the convex sets by measures on the unit sphere) can thus be seen as a canonical bijection between convex functions and Borel measures on $\mathbb{R}^n$. The existence of such functions $\varphi$ was proved by Cordero-Erausquin and Klartag in \cite{CEK} for a large class of measures $\mu$ called "{\it moment measures}" which have their barycenter at the origin (and thus have a finite first moment) and are not supported on a hyperplane. This is an important extension which was originally studied for complex geometry purposes, e.g. the moment maps associated to convex polytopes play a important role in the construction of K\"ahler-Einstein metrics on toric varieties, see Berman and Berndtsson \cite{BerB} (K\"ahler-Einstein metrics on log-Fano varieties), Wang and Zhu \cite{wang}, Donaldson \cite{DON} or Legendre \cite{legendre}, and recently extended by Rotem \cite{Rotem1,Rotem2} using surfaces area measure of log-concave functions.
\bigbreak
From this construction, Fathi in \cite{mm} realized through a fixed point argument that the \textit{moment map} contains a lot of information on how a target measure is close to the standard gaussian one and realised that it is possible to construct Stein kernels using these {\it moment maps}, which are weak solutions of a {\it toric Kähler-Einstein} equation, thus giving a new powerful tool to compare $\mu$ with the standard Gaussian measure $\gamma$, and to prove that suitable regularity estimates in some weighted Sobolev spaces on these moment maps control the rate convergence to the standard Gaussian law. He was also able to generalise his construction of Stein kernels with respect to more general reference measures, i.e. for $e^{-V}dx$, with $\Hess\: V>0$, and was able to obtain new (and also dimensionally sharp) estimates for the CLT in $L^p, p\geq 2$ Wasserstein distances for uniformly log-concave isotropic probability measures by using the breakthrough work of Klartag in \cite{KlaKol} which obtained universal regularity estimates for the \textit{moment map} in some weighted Sobolev spaces for the class of log-concave measures associated with a smooth uniformly convex potential.
\bigbreak
    Motivated by the role played by the semicircular distribution in the free context, the aim of this paper is to study the free counterparts of Fathi's results in order to translate these results into the free world, where we will see that free moment Stein kernels exist in great generality in the one-dimensional case and in a perturbative regime in the multidimensional case. We will also show how regularity estimates on a variant of the free Monge-Ampère equation of Guionnet and Shlyakhtenko \cite{GS}, called the free Kähler-Einstein equation, control the transport distance to the semicircular law. We will then use Caffarelli's maximum principle argument \cite{Cafa} to prove a free Klartag's contraction theorem in the one-dimensional case: for a centered free Gibbs measure associated with a smooth $\epsilon$-uniformly convex potential, if its \textit{moment map} is smooth enough, we will show that the \textit{moment map} is then $\epsilon^{-1}$-Lipschitz; which we will straightforwardly adapt to Cafarelli's free version (sending the semicircular law onto another smooth free Gibbs measure). This will lead to free moment Stein kernels uniformly bounded by the inverse of the convexity constant. More interestingly, we will introduce a potential underlying structure analogous to the weighted Riemannian manifold equipped with a Hessian metric given by the Riemmanian metric tensor $g=\Hess \varphi$, i.e. $M^*_{\varphi}:=(\mathbb{R}^n, \Hess \varphi, e^{-\varphi}dx)$ and generally called a \textit{Hessian manifold}, which is encoded by a smooth convex function: $\varphi:\mathbb{R}^n\rightarrow \mathbb{R}$, and which has been studied intensively by Klartag and Kolesnikov for optimal transport and convex geometry purposes \cite{Klart,Kol,KlaKol,Kol2}. It is known that this manifold is the "realisation" (the opposite of "complexification" according to Klartag) of a certain toric K\"ahler manifold (even true in the orbifold case which is a manifold with singularities), which has moreover spectacular curvature bounds since the Ricci tensor associated with the K\"ahler form induced by $\varphi$ will be exactly equal to half the metric tensor.  This non-commutative structure will be established in full generality in the one-dimensional case and again for perturbations close to the semicircular law in the multidimensional case.  We will see that an important property, namely, that the derivatives of the moment map in any direction is an eigenfunction of a certain free diffusion operator, or more precisely the free Laplacian on what we will call a non-commutative Hessian manifold, which is always associated with the eigenvalue "$-1$" and contains all linear functionals as in the classical case, see Klartag and Kolesnikov \cite{KlaKol}, equation $(2.1)$ for the classical results. We then introduce a (bi-modular) \textit{carré-du-champ} operator and conjecture that the Ricci curvature of this non-commutative Hessian manifold is bounded from below by $1/2$, which would be the free analogue of the result proved by Kolesnikov in \cite{Kol}. We will also explore connections with other branches of free probability, such as invariant measures of free diffusion processes, or with the theory of non-commutative Dirichlet forms and free spectral gap (Poincaré) inequalities. Finally, since the main goal of the free monotone transport of Guionnet and Shlyakhtenko \cite{GS} was to prove isomorphism between von Neumann algebras, we explain why, at least heuristically (for a large number of generators), free Gibbs laws around the semicircular laws generate von Neumann algebras which are all isomorphic to the free group factor (Guionnet and Shlyakhtenko \cite{GS}): this seems to be the free analogue of Klartag's CLT for convex bodies \cite{convex}, which has a nice interpretation in the moment map setting that we will translate into the free world. Thus enforcing in the free context, that convexity can "\textit{replace the role of independence in certain aspects of the phenomenon represented by the central limit theorem}": Klartag \cite{convex}.
\newline
Moreover, if the moment map construction of Bahr and Boschert \cite{BB} could be extended beyond this even perturbative setting, it would also give a heuristic and a possible way of understand better how von Neumann algebra generated by free Gibbs laws associated to a regular convex potential are shown to be isomorphic to a free group factor $L(\mathbb{F}_n)$ as proved by Dabrowski, Guionnet, Shlyakhtenko and Jekel \cite{YGS,jektriang}.
\section{Definitions and Notations}
\bigbreak
In the following we consider a self-adjoint operator $x\in \mathcal{M}$ (a finite von Neumann algebra equipped with $\tau$ a faithful normal tracial state) with analytic distribution $\mu:=\mu_x=\tau\circ E_x$, where $E_x$ denotes the spectral measure of $x$ on its spectrum $\sigma(x)\subset \mathbb{R}$.

\bigbreak
\begin{definition}
A free Stein kernel for a self-adjoint operator $x\in (\mathcal{M},\tau)$ with analytic distribution $\mu:=\mu_X$ relative to a $\mathcal{C}^1$ potential $u:\mathbb{R}\rightarrow \mathbb{R}$ is a function $A:\mathbb{R}^2\rightarrow \mathbb{R}$ such that for all smooth test functions $f$:
\begin{equation}
\int u'(x)f(x)d\mu(x)=\iint A(x,y)\mathscr{J}f(x,y)d\mu(x)d\mu(y),
\end{equation}
The Stein discrepancy of $x$ relative to $u$ is then defined as 
\begin{equation}
\Sigma ^*(x|u)=\inf_{A}\iint\bigg[A(x,y)-1\bigg]^2d\mu(x)d{\mu}(y),
\end{equation}
where the infinimum is taken over all admissible Stein kernels $A$ of $x$ relative to $u$.
\end{definition}
If the measure $\mu$ is non-atomic, which is known from Voiculescu's work (Appendix in \cite{Vca}) to be equivalent to $\mathscr{J}f$, which is closable as a densely defined operator from $L^{2}(\mathbb{R},d\mu)$ to $L^2(\mathbb{R}^2,d\mu\otimes d\mu)$, the diagonal: $\Delta\subset \mathbb{R}^2$ has measure zero with respect to $\mu\otimes \mu$, and we can relax the Stein equation to
\begin{equation}
\int u'(x)f(x)d\mu(x)=\iint A(x,y)\frac{f(x)-f(y)}{x-y}d\mu(x)d\mu(y),
\end{equation}

\begin{definition}
The standard semicircular distribution is the probability distribution:
\begin{equation}
    \eta(dx)=\frac{1}{2\pi}\sqrt{4-x^2}dx, \; \lvert x\rvert\leq 2,
\end{equation}
This distribution has all its odd moments equal to zero. Its even moments can be expressed as {\it Catalan numbers} thanks to the following relation, valid for all non-negative integers $m$:
\begin{equation}
    \int_{-2}^{2}x^{2m}\eta(dx)=C_m,
\end{equation}
where $C_m=\frac{1}{m+1}\binom{2m}{m}$ is the {$m$-th Catalan number}.
\end{definition}
 
\begin{proposition}(Voiculescu, \cite{Ventro})  
    Let $\mu$ be a Borel probability measure on $\mathbb{R}$, then its free entropy is given by
    \begin{equation}
        \chi(\mu)=\iint \log\lvert t-s\rvert d\mu(s)d\mu(t)+\frac{3}{4}+\frac{1}{2}\log 2\pi,
    \end{equation}
    In particular, and up to constants, the free entropy is the minus sign of the \it{logarithmic energy} of $\mu$.
\end{proposition}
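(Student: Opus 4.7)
The plan is to work directly from Voiculescu's microstates definition of free entropy. For a compactly supported Borel probability measure $\mu$ on $\mathbb{R}$,
\begin{equation}
\chi(\mu) = \lim_{m\to\infty,\,\epsilon\to 0}\limsup_{N\to\infty}\left(\frac{1}{N^2}\log \Vol\bigl(\Gamma_R(\mu;N,m,\epsilon)\bigr) + \frac{1}{2}\log N\right),
\end{equation}
where $\Gamma_R(\mu;N,m,\epsilon)$ is the set of self-adjoint $N\times N$ matrices with operator norm at most $R$ whose first $m$ normalized traces approximate the moments of $\mu$ to within $\epsilon$, and $\Vol$ is Hilbert--Schmidt Lebesgue volume. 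The general Borel case then follows by a standard truncation and free-convolution regularisation argument.

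The first step is the Weyl integration formula. Since $\Gamma_R(\mu;N,m,\epsilon)$ is invariant under unitary conjugation, its Lebesgue volume factorises as a universal Jacobian constant $c_N$, times the Haar volume of $U(N)/\mathbb{T}^N$, times the Vandermonde integral
\begin{equation}
\int_{\Lambda_R(\mu;N,m,\epsilon)} \prod_{1\leq i<j\leq N}(\lambda_i-\lambda_j)^2\, d\lambda_1\cdots d\lambda_N
\end{equation}
over the corresponding set of eigenvalue tuples whose empirical distribution $\tfrac{1}{N}\sum_i \delta_{\lambda_i}$ approximates $\mu$.

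The second step is to invoke the Ben Arous--Guionnet / Hiai--Petz large deviation principle for the Vandermonde density: as $N\to\infty$ (and then $m\to\infty$, $\epsilon\to 0$), the $N^{-2}$-rescaled logarithm of the above integral converges to the double logarithmic energy $\iint \log|s-t|\,d\mu(s)\,d\mu(t)$. This captures the entire $\mu$-dependent part of the answer.

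It remains to identify the additive numerical constant. Stirling's formula yields asymptotics for $N^{-2}\log c_N$ and for the Haar volume of $U(N)/\mathbb{T}^N$ under the induced Riemannian volume; together with the additive $\tfrac{1}{2}\log N$ term these must produce a universal constant. This bookkeeping is the main obstacle, because each of the three contributions diverges individually and one must be consistent about normalisation conventions throughout. The cleanest way to avoid mistakes is to first establish the formula up to an unknown additive constant $C$ and then pin down $C$ by a benchmark: a direct computation using the equilibrium logarithmic potential of the semicircle gives $\iint \log|s-t|\,d\eta(s)\,d\eta(t) = -\tfrac{3}{4}$, while $\chi(\eta)$ can be computed independently (for instance from the GUE partition function via the Selberg--Mehta integral, or from the free de Bruijn identity along a free Brownian motion started from the semicircle) to equal $\tfrac{1}{2}\log 2\pi$. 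Matching these two values forces $C=\tfrac{3}{4}+\tfrac{1}{2}\log 2\pi$, yielding the stated formula.
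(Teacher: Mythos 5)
The paper itself offers no proof of this proposition: it is quoted verbatim from Voiculescu's second free-entropy paper (Proposition 4.5 there), so the only meaningful comparison is with the classical argument. Your outline follows exactly that route — unitary invariance plus the Weyl integration formula to reduce the microstate volume to a constrained Vandermonde integral, large-deviation-type estimates identifying the $N^{-2}$-scaled logarithm with the logarithmic energy, and Stirling-type bookkeeping (or a calibration argument) for the additive constant — which is essentially Voiculescu's original proof with the asymptotics outsourced to Ben Arous--Guionnet/Hiai--Petz. The strategy of first proving $\chi(\mu)=\Sigma(\mu)+C$ with a universal constant $C$ and then evaluating $C$ on a benchmark measure is legitimate, provided the LDP step is carried out with matching upper and lower bounds (the lower bound near the logarithmic singularity needs the usual well-separated-quantile construction), and provided one remembers that the microstates definition, as recalled in the paper, applies to compactly supported measures, so the ``general Borel'' case is only reached through the cutoff/regularisation step you mention.

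The concrete flaw is in the calibration step, which you yourself single out as the crux. Both benchmark values you quote are wrong by $\tfrac12$: the logarithmic potential of the standard semicircular law is $\int\log\lvert x-y\rvert\,d\eta(y)=\tfrac{x^2}{4}-\tfrac12$ on $[-2,2]$, whence $\iint\log\lvert s-t\rvert\,d\eta(s)\,d\eta(t)=-\tfrac14$, not $-\tfrac34$; and the free entropy of the standard semicircle is $\chi(\eta)=\tfrac12\log(2\pi e)$ (the free maximum-entropy value at variance one), not $\tfrac12\log 2\pi$. The two errors happen to cancel, so you land on the correct constant $C=\tfrac34+\tfrac12\log 2\pi$, but as written the verification does not hold up; with the correct values the same matching gives $C=\tfrac12\log(2\pi e)+\tfrac14=\tfrac34+\tfrac12\log 2\pi$, so the method is fine once the numbers are repaired. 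One further caution: pinning $\chi(\eta)$ via a ``free de Bruijn identity'' risks circularity, since the integral formula for $\chi$ along free Brownian motion is usually derived from, or together with, the very one-variable formula you are proving; the GUE partition function via the Selberg--Mehta integral is the safe, independent benchmark.
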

\begin{flushleft}
As in the classical case, Free Gibbs laws are characterised by a minimization problem: they minimize the relative microstate variant of entropy.
\end{flushleft}
\begin{definition}(\cite{GM,FN,VF})
The free Gibbs law $\tau_V$ associated with the potential $V$, if it exists, is the minimizer of the functional:
\begin{equation}
\chi_V(\tau):=-\chi(\tau)+\tau(V),
\end{equation}
\end{definition}
\begin{flushleft}
In the one-dimensional case, existence and uniqueness of a minimizer is ensured when $V$ is lower semi-continuous and satisfies 
\begin{equation}\label{gibbs}
    \underset{\lvert x\rvert \rightarrow \infty}{\lim}(V(x)-2\log\lvert x\rvert)=+\infty,
\end{equation}
It is known (see \cite{Saff} or \cite{LP}) that under these assumptions that there exists a unique minimizer $\nu_V$ in the set $\mathscr{P}(\mathbb{R})$ and the solution is compactly supported.
\end{flushleft}
\begin{remark}
    It is known that the existence of a minimizer is ensured in all dimensions if $V$ is a small analytic perturbation of the quadratic potential. 
\newline
We also know that if $V$ is bounded from below, satisfies a growth condition at infinity and a H\"older-type continuous criterion \cite{DmV}, we also have uniqueness of the minimizer in \eqref{gibbs}.
\end{remark}

In particular, and focusing only on convex potentials $u:\mathbb{R}\rightarrow \mathbb{R}$ (and thanks to Voiculescu's explicit calculations), we recall that in the one-dimensional case, the free entropy is the opposite of the logarithmic energy (up to an explicit constant), thus we are left to minimize the following functional.
\begin{definition}
The free Gibbs measure $\nu_u$ associated with the convex function $u:\mathbb{R}\rightarrow \mathbb{R}$ is the measure corresponding to the free Gibbs law $\tau_u$. This is the unique minimizer of the functional
\begin{equation}
-\iint \log\lvert t-s\rvert d\mu(t)d\mu(s)+\int u(s)d\mu(s),
\end{equation}
\end{definition}
\bigbreak
It was also noticed by Voiculescu (see section 3.7 of \cite{VF}), that the above minimizer $\nu_u$, should satisfy the following {\it Euler-Lagrange} equation: 
\begin{equation}\label{HT}
    2\pi H\nu_u=u',\: \nu_u-a.e
\end{equation}
where $Hp$ denotes the Hilbert transform of a probability measure $\rho$ which is defined as:
\begin{equation}
    H\rho(t)=\frac{1}{\pi}PV\int_{\mathbb{R}}\frac{1}{t-x}d\rho(x),
\end{equation}
Since $\nu_u$ minimizes $\chi_u(\cdot)$. We have for a test function $f$, and $\delta\in \mathbb{R}$, that
\begin{equation}
    \chi_u((x+\delta f)_{\sharp} \nu_u)\leq \chi_u(\nu_u),
\end{equation}
Then, by considering $\frac{d}{dt}{\chi_u((x+\delta f)_{\sharp}\nu_u)}_{|t=0}$, it is implied that this last equation \eqref{HT} can also be rewritten by a Schwinger-Dyson (type) equation as follows (which is also true in the multidimensional case, using the microstate version of free entropy).
\begin{proposition}(Guionnet and Shlyakhtenko: theorem 5.1 in \cite{GS}, Fathi and Nelson: lemma 1.6 in \cite{FN})
The unique minimizer of the previous functional  is necessarily a solution of the following Schwinger-Dyson equation:
\begin{equation}
    \int u'(x)f(x)d\nu_u(x)=\iint \frac{f(x)-f(y)}{x-y}d\nu_u(x)d\nu_u(y),
\end{equation}
for all nice test functions $f$.
\end{proposition}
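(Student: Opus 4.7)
The plan is to carry out the first-variation computation that is already sketched in the paragraph preceding the proposition: perturb the minimizer $\nu_u$ by a pushforward along $\mathrm{id}+\delta f$ and use the fact that the derivative of $\chi_u$ vanishes at $\delta=0$. Concretely, fix a smooth, compactly supported test function $f:\mathbb{R}\to\mathbb{R}$ and choose $\delta_0>0$ small enough that $1+\delta f'(x)>0$ for all $|\delta|<\delta_0$. Then $T_\delta(x):=x+\delta f(x)$ is a $C^1$-diffeomorphism of $\mathbb{R}$, so $\mu_\delta:=(T_\delta)_\sharp\nu_u\in\mathscr{P}(\mathbb{R})$ is an admissible competitor. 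Since $\nu_u$ is the unique minimizer of $\chi_u$, the scalar function $\delta\mapsto\chi_u(\mu_\delta)$ is minimized at $\delta=0$, and its derivative there must vanish.

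The next step is to differentiate the two pieces of
\begin{equation}
\chi_u(\mu_\delta)=-\iint\log|t-s|\,d\mu_\delta(t)d\mu_\delta(s)+\int u\,d\mu_\delta
\end{equation}
at $\delta=0$. The potential piece equals $\int u(x+\delta f(x))\,d\nu_u(x)$, and since $u\in C^1$ and $\nu_u$ has compact support, dominated convergence immediately yields derivative $\int u'(x)f(x)\,d\nu_u(x)$. For the logarithmic-energy piece, I use the factorization
\begin{equation}
\log|T_\delta(x)-T_\delta(y)|=\log|x-y|+\log\Bigl|1+\delta\,\tfrac{f(x)-f(y)}{x-y}\Bigr|,
\end{equation}
so the first summand is independent of $\delta$ and the second has derivative $\tfrac{f(x)-f(y)}{x-y}$ at $\delta=0$. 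Combining the signs in $\chi_u=-\chi+\tau(u)$ and setting the total derivative equal to zero produces exactly the claimed identity.

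The main technical obstacle is justifying the exchange of $d/d\delta$ with the double integral in the log-energy term, because of the diagonal singularity of $\log|x-y|$. Two ingredients handle this. First, $\nu_u$ is known to be non-atomic (under the hypothesis \eqref{gibbs}: by the Euler--Lagrange relation $2\pi H\nu_u=u'$, point masses would force $H\nu_u$ to blow up; equivalently, $\nu_u$ has finite logarithmic energy, which forbids atoms), so $(\nu_u\otimes\nu_u)(\Delta)=0$ and the integrand is defined a.e. Second, the bound $\left|\tfrac{f(x)-f(y)}{x-y}\right|\le\|f'\|_\infty$ together with the mean-value inequality $\bigl|\log|1+\delta q|\bigr|\le C|\delta|\,|q|$ for $|\delta q|\le 1/2$ provides a uniform-in-$\delta$ $L^1(\nu_u\otimes\nu_u)$ domination, enabling dominated convergence. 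Once the interchange is legitimized, setting the derivative to zero gives
\begin{equation}
\int u'(x)f(x)\,d\nu_u(x)=\iint\frac{f(x)-f(y)}{x-y}\,d\nu_u(x)d\nu_u(y),
\end{equation}
and since $f$ was an arbitrary smooth compactly supported function, this is the Schwinger--Dyson equation asserted in the proposition.
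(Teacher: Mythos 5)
Your proof is correct and follows exactly the first-variation argument that the paper itself sketches before the proposition (and delegates to Guionnet--Shlyakhtenko and Fathi--Nelson): perturbing the minimizer by $(x+\delta f)_\sharp\nu_u$, factoring $\log|T_\delta(x)-T_\delta(y)|=\log|x-y|+\log\bigl|1+\delta\tfrac{f(x)-f(y)}{x-y}\bigr|$, and setting the derivative at $\delta=0$ to zero. Your added justification of the interchange of derivative and integral via the uniform bound $\bigl|\tfrac{f(x)-f(y)}{x-y}\bigr|\le\|f'\|_\infty$ and dominated convergence is a welcome completion of the sketch, not a different route.
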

\begin{remark}\label{rem}
Guionnet and Maurel-Ségala in \cite{GM} have indeed proved the equivalence between the Schwinger-Dyson equation and the above minimisation problem assuming that the free Gibbs measure $\nu_u$ exists, is unique and has a connected support. For example: 
\begin{enumerate}
    \item If $u$ is strictly convex on a sufficiently large interval, i.e. $\exists \kappa>0$, such that $u''(x)\geq \kappa $ for all $x\in \mathbb{R}$. This also ensures that $\nu_u$ is supported on a compact interval and has a density w.r.t. the Lebesgue measure.
    \item The other case where existence and uniqueness (even in the multidimensional setting) are ensured is when we consider a "small" analytic perturbation of the semicircular potential, i.e. for potentials $V=\frac{1}{2}\lVert x\rVert^2+\beta W$ (where $W$ is a n.c. power series and $\beta$ is small. See e.g. \cite{DmV}).
\end{enumerate}  
\end{remark}
\begin{flushleft}
We recall that in dimension one, for functions $f\in \mathcal{C}^1(\mathbb{R})$, we have the following expression for the free difference quotient (in the sequel, we will rather use the notation $\mathscr{J}$ for the free difference quotient when we are dealing with the {\it measure formulation}):
\begin{equation}
 \mathscr{J}f(x,y)=\begin{cases*}
                    \frac{f(x)-f(y)}{x-y} & if $x\neq y$  \\
                     f'(x) & if $x=y$
                 \end{cases*},
\end{equation} 
We also recall that we have the following chain rule, which is easy to check
\begin{equation}
    \mathscr{J}(f\circ g)(x,y)=\mathscr{J}g(x,y)\cdot \mathscr{J}f(g(x),g(y))\nonumber
\end{equation}
We will also denote $\mathscr{J}f(g)$ or sometimes $\mathscr{J}f\circ g$ as
\begin{equation}\label{chain}
    \mathscr{J}f(g)(x,y):=\mathscr{J}f(g(x),g(y))\nonumber
\end{equation}
The cyclic derivative then coincide with the usual derivative, i.e $\mathscr{D}u(x)=u'(x)$.
\bigbreak
Finally, we will call the non-commutative Hessian in the free context as the operator $\mathscr{J}\mathscr{D}$, and we will denote in a shorthand $(\mathscr{J}f)^{-1}$ as \begin{equation}
    (\mathscr{J}f)^{-1}(x,y):=(\mathscr{J}f(x,y))^{-1}=\frac{x-y}{f(x)-f(y)}
\end{equation}
when this expression makes sense (for example in the sequel we will consider the case $f=u'$ where $u$ is $\mathcal{C}^2$-smooth and strictly convex this expression is licit). \end{flushleft}

\section{Free Stein kernels and moments maps}\label{4}
In this section we will focus only on the one-dimensional (commutative) case, where we have a proper notion of the analytic distribution $\mu_x$ of a non-commutative random variable $x\in (\mathcal{M},\tau)$ (we will focus only on the "measure" version and deal with compactly supported measure instead of non-commutative random variables), we will study the construction of free Stein kernels for compactly supported measure thanks to a notion of {\it free moment maps} developed in the free setting by Bahr and Boschert in \cite{BB}, and which corresponds to a free analogue of a breakthrough result in the classical case by Cordero-Erausquin and Klartag in \cite{CEK}, which gives a large class of probability measures that are the pushforward of a convex function $\varphi$ (the transport map) with respect to the law of density $e^{-\varphi}$. 
\begin{flushleft}
In the sequel, we recall that for $\varphi:\mathbb{R}^d\rightarrow \mathbb{R}\cup\left\{+\infty\right\}$ a function, convex or not, and not identically ${+\infty}$, we denote its Legendre transform, as the convex function:
\begin{equation}
\varphi^*(y)=\sup_{\substack{x\in \mathbb{R}^d\\ \varphi(x)<\infty}}\left\{x\cdot y-\varphi(x)\right\}
,
\end{equation}
And which satisfies the following property:
\begin{enumerate}
\item $\varphi^*$ is always convex and lower semi-continuous.
\item when $\varphi$ is convex and differentiable at the point $x$, we have:
\begin{equation}
    \varphi^*(\nabla \varphi(x))=x\cdot \nabla \varphi(x)-\varphi(x),\nonumber
\end{equation}

\item If $\varphi$ is $\mathcal{C}^2$, $\varphi^*$ is also $\mathcal{C}^2$, and $\nabla \varphi^*$ is the inverse of $\nabla \varphi$, i.e: $\nabla \varphi^*=(\nabla \varphi)^{-1}$.
\item The unique fixed point of the Legendre transform is given by $\frac{{\lVert x\rVert_2}^{2}}{2}$, i.e. the Legendre transform is a symmetry on the space of convex functions around this fixed point.
\end{enumerate}
\end{flushleft}
Building on earlier work by various authors \cite{BerB,DON,legendre,wang}, the following important theorem was established by Cordero-Erausquin and Klartag in \cite{CEK}. 
\begin{theorem} (Cordero-Erausquin and Klartag in \cite{CEK}) \label{th3}
Let $\mu$ be a measure in $\mathscr{P}_1(\mathbb{R}^d)$ with barycenter at the origin, and not supported on an hyperplane. Then there exists an essentially-continuous convex function $\varphi$ (uniquely determined up to translations), such that $\mu$ is the pushforward of the centered probability measure with density $e^{-\varphi}$, by the map $\nabla \varphi$. The function $\varphi$ is called the moment map of $\mu$.
\end{theorem}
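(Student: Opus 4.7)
The plan is to obtain $\varphi$ as the minimizer of a variational principle tailored so that its Euler--Lagrange equation is exactly the moment-map relation. Consider the Klartag-type functional
\[
F_\mu(\psi) \;=\; \int_{\mathbb{R}^d} \psi \, d\mu \;-\; \log \int_{\mathbb{R}^d} e^{-\psi^*(x)} \, dx
\]
defined on the cone of lower-semicontinuous convex functions $\psi : \mathbb{R}^d \to \mathbb{R} \cup \{+\infty\}$ for which both terms are finite. Setting $\varphi := \psi^*$ and differentiating along a smooth compactly supported perturbation $\psi_\epsilon = \psi + \epsilon \eta$, the envelope theorem yields $\partial_\epsilon \psi_\epsilon^*(x)|_{\epsilon=0} = -\eta(\nabla \psi^*(x))$, so any stationary point must satisfy
\[
\int_{\mathbb{R}^d} \eta \, d\mu \;=\; \int_{\mathbb{R}^d} \eta(\nabla \varphi(x)) \, \frac{e^{-\varphi(x)}}{Z} \, dx \qquad \forall \, \eta,
\]
which is precisely the statement that $\mu = (\nabla\varphi)_{\sharp} (Z^{-1} e^{-\varphi}\, dx)$. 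Absorbing $\log Z$ into an additive constant turns this into the normalization $\int e^{-\varphi} = 1$.

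Having set up the variational problem, the next task is to exhibit a minimizer by the direct method. After normalizing $\min \psi_n = 0$, convexity of the $\psi_n$ gives a uniform lower bound on any compact set. The main obstacle is then tightness of the minimizing sequence, and it is here that the two hypotheses on $\mu$ play their role. The barycenter condition $\int x \, d\mu = 0$ makes the functional invariant under the symmetry $\psi(\cdot) \mapsto \psi(\cdot - v) + c_v$ for every $v$, so translations can be factored out by imposing, say, that $0$ is a minimizer of each $\psi_n$. The hypothesis that $\mu$ is not supported on a hyperplane is then used to rule out the degenerate scenario in which $\psi_n$ becomes asymptotically affine along some direction: such a degeneration would force $\int e^{-\psi_n^*} \to \infty$ while $\int \psi_n \, d\mu$ remains controlled, driving $F_\mu(\psi_n)$ to $-\infty$ and contradicting the lower bound on $F_\mu$ supplied by Jensen's inequality applied to the probability measure $e^{-\psi^*}/Z$. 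With tightness in hand, the Blaschke selection principle for convex functions (local uniform convergence on compact subsets of the interior of the limiting domain) yields a subsequence converging to a convex limit, and lower semicontinuity of $F_\mu$ under this convergence produces the desired minimizer.

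The final step is uniqueness up to translation, which I would derive from a hidden strict convexity of $F_\mu$ modulo the translation symmetry. The cleanest route is to transport the variational problem to the level of measures $\rho = Z^{-1} e^{-\varphi}$ and use McCann's displacement interpolation: linearity of $\int \psi \, d\mu$ and the strict displacement-concavity of the entropy term force any two minimizers to be related by translation. Essential continuity of $\varphi$ --- i.e.\ zero surface measure of the boundary of its domain of finiteness --- falls out of the Euler--Lagrange equation itself, since positive mass concentrated on any face of the domain would allow a one-sided perturbation strictly decreasing $F_\mu$. I expect the tightness estimate to be the main obstacle, as both geometric hypotheses on $\mu$ enter there in a quantitative way; by contrast, the Euler--Lagrange computation and the convex-analytic uniqueness argument are essentially formal once a regular enough minimizer is known to exist.
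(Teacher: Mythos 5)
The paper does not prove this statement at all: Theorem \ref{th3} is imported verbatim from Cordero-Erausquin and Klartag \cite{CEK}, so your attempt can only be measured against the literature. Your outline does follow the right general strategy (the variational scheme of \cite{CEK}: minimize $\psi\mapsto\int\psi\,d\mu-\log\int e^{-\psi^*}$, identify the Euler--Lagrange equation with the pushforward relation, and exploit convexity of the functional, which indeed comes from the inf-convolution identity for $\big((1-t)\psi_0+t\psi_1\big)^*$ together with Pr\'ekopa--Leindler). But as a proof it has concrete gaps. First, you have identified the wrong symmetry: the functional is \emph{not} invariant under $\psi(\cdot)\mapsto\psi(\cdot-v)+c_v$; translating $\psi$ changes both terms. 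The invariance, and the only place the barycenter hypothesis enters at this stage, is under adding linear functions, $\psi\mapsto\psi-\langle v,\cdot\rangle$, which at the level of $\varphi=\psi^*$ is exactly translation $\varphi(\cdot)\mapsto\varphi(\cdot+v)$. Consequently your normalization ``$0$ is a minimizer of each $\psi_n$'' quotients the wrong group and does not remove the actual degeneracy.

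Second, the two analytic cores of \cite{CEK} are asserted rather than proved. Coercivity/tightness does not follow from the Jensen bound you invoke ($\int\psi\,d\mu\ge\psi(0)$ gives no control of $\log\int e^{-\psi^*}$, since the domain may be unbounded); in \cite{CEK} the hypothesis that $\mu$ is not supported on a hyperplane is used quantitatively to get a lower bound on $\int\psi\,d\mu$ that grows linearly in a norm of $\psi$ after the linear-function normalization, and this is where the real work lies. Likewise, essential continuity does not ``fall out of the Euler--Lagrange equation'': the first-variation/envelope computation and the integration by parts are only legitimate once one knows enough regularity of the minimizer, and proving that the minimizer is essentially continuous (and handling the equality case in Pr\'ekopa--Leindler for uniqueness up to translation) is precisely the delicate part of \cite{CEK}. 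Your uniqueness step via displacement interpolation of $\rho=Z^{-1}e^{-\varphi}$ is closer in spirit to Santambrogio's entropy-plus-maximal-correlation formulation \cite{SA} (the route Bahr--Boschert follow in the free case \cite{BB}); it can be made to work, but again only yields uniqueness of the measure $e^{-\varphi}$ directly, and upgrading that to uniqueness of $\varphi$ up to translation still requires the essential-continuity analysis you have assumed away.
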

Conversely, if $\varphi$ is an essentially-continuous convex function (this condition ensures the validity of the "integration by parts") and also the uniqueness up to translations and can be thought of roughly as follows: $\varphi$ is essentially continuous if and if only it is lower semi-continuous, and the set of discontinuity points of $\varphi$ has zero $\mathcal{H}^{n-1}$ measure, where $\mathcal{H}^{n-1}$ denotes the $(n-1)$-dimensional Haussdorff measure), with $0<\int_{\mathbb{R}^d}e^{-\varphi}dx<+\infty$, its associated {\it moment measure} is not supported on a hyperplane and its barycenter is at the origin (cf. Proposition $1$ in \cite{CEK}).
\newline
The function $\varphi$, solution of theorem \eqref{th3}, may not be smooth in full generality, especially when the {\it moment law} is a combination of Dirac masses (see the remark before theorem 2.2 in \cite{mm} and e.g. 2.5.2 in \cite{BB}). In fact, consider the uniform measure on $\left\{-1,1\right\}$ as a subset of $\mathbb{R}$, i.e. $\mu=\frac{1}{2}\delta_{-1}+\frac{1}{2}\delta_1$, for which we can compute $\varphi(x)=\frac{1}{2}\lvert x\rvert$, which is not smooth at the origin. Another example is given by the uniform measure on the sphere $\mathbb{S}^{d-1}:=\left\{x \in \mathbb{R}^d, \lVert x\rVert=1\right\}$, for which the moment measure is $\varphi(x)=\lVert x\rVert$. Except in dimension one where we can express the moment map in terms of the the target measure $\mu$, there is no explicit formula in higher dimensions. Fortunately, a result due to Berman and Berndtson \cite{BerB} shows how to derive the smoothness of the moment map via Cafarelli regularity theory. Indeed, if $\mu$ is supported on a compact open convex set with a smooth $\mathcal{C}^{\infty}$ density $\rho$ bounded from above and below, then the moment map is finite and $\mathcal{C}^{\infty}$ in the entire $\mathbb{R}^n$. If we only assume that the density is continuous, bounded from above and below, we can indeed deduce that the moment map is at least $\mathcal{C}^2$, which is sufficient for the considerations of Stein's method, see Fathi's remark $3.1$ in \cite{mm}.
\begin{flushleft}
In our context, Bahr and Boschert \cite{BB} were able to prove a free analogue (especially well adapted in dimension one) of this construction by adapting the variational method of Santambrogio \cite{SA} for the classical moment measures. Note also that the condition that the centered measure $\mu$ is not supported on an hyperplane is reduced in this one-dimensional case to the condition: $\mu\neq \delta_0$.
\end{flushleft}
\begin{theorem}(Bahr and Boschert, Theorem 2.5 in \cite{BB})\label{2.5}
Let $\mu\neq \delta_0$, a probability measure in $\mathscr{P}_2(\mathbb{R})$ with barycenter zero, then there exists a convex, lower semi-continuous function $u$, such that $\mu$ is the pushforward of the free Gibbs measure $\nu_u$ by the function $u'$, i.e $\mu=(u')_{\sharp} \nu_u$. The convex function $u$ is called the {\it free moment map} of $\mu$.
\newline
Moreover, $\nu_u$ is absolutely continuous w.r.t the Lebesgue measure, has compact support, and is the unique centered minimizer of the functional 
\begin{equation}
    \mathcal{F}(\rho)=L(\rho)+T(\rho,\mu)
\end{equation}
defined on $\mathscr{P}_2(\mathbb{R})$, where $L(\rho)$ denote the logarithmic energy and
$T(\rho,\mu)$ is the maximal correlation functional:
\begin{equation}
    T(\rho,\mu)=\sup\left\{\int x\cdot y\: d\gamma, \gamma \in d\pi(\rho,\mu)\right\}
\end{equation}
where $\pi(\rho,\mu)$ is the set of transport plans with marginals $\rho$ and $\mu$.
\end{theorem}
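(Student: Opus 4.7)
The plan is to adapt Santambrogio's variational strategy for classical moment measures~\cite{SA} to the one-dimensional free setting, where the logarithmic energy plays the role taken by Boltzmann entropy in the classical picture. The central observation is that the Euler-Lagrange equation for $\mathcal{F}$ simultaneously encodes two structures: the first-order condition reproduces the Hilbert transform equation~\eqref{HT} identifying the minimizer $\nu_u$ as the free Gibbs measure of some convex potential $u$, and this same $u$ plays the role of a one-dimensional Brenier potential whose derivative is the monotone transport from $\nu_u$ to $\mu$.

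I would first establish existence of a minimizer by the direct method on centered measures in $\mathscr{P}_2(\mathbb{R})$. Cauchy-Schwarz bounds $T(\rho,\mu)$ by the product of the second moments of $\rho$ and $\mu$, and the logarithmic energy is bounded below on probability measures with uniform second moment, so $\mathcal{F}$ is bounded below and any minimizing sequence has uniformly bounded second moments. Prokhorov tightness, weak lower semi-continuity of $L$, and weak continuity of $T(\cdot,\mu)$ on $\mathscr{P}_2$ then yield a minimizer; strict convexity of $L$ combined with convexity of $T(\cdot,\mu)$ delivers uniqueness, centering being preserved thanks to translation invariance of $L$ and the hypothesis that $\mu$ itself is centered.

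The key step is to read off the moment map from the first variation. Given the minimizer $\nu_u$ and a smooth compactly supported vector field $\eta$, the inner variation $\rho_\varepsilon = (\mathrm{Id}+\varepsilon\eta)_\sharp \nu_u$ yields directly
\begin{equation*}
\frac{d}{d\varepsilon}\bigg|_{\varepsilon=0} L(\rho_\varepsilon) = -2\pi \int \eta(t)\, H\nu_u(t)\, d\nu_u(t),
\end{equation*}
while the envelope theorem applied to the Kantorovich dual representation
\begin{equation*}
T(\rho,\mu) = \inf_{u \text{ convex, l.s.c.}}\left\{\int u\, d\rho + \int u^*\, d\mu\right\}
\end{equation*}
gives $\frac{d}{d\varepsilon}\big|_{\varepsilon=0} T(\rho_\varepsilon,\mu) = \int \eta(t)\, u'(t)\, d\nu_u(t)$ where $u$ is an optimal Kantorovich potential. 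Requiring the total derivative to vanish for every admissible $\eta$ forces $u'(t) = 2\pi H\nu_u(t)$ on $\supp(\nu_u)$, which is precisely the Schwinger-Dyson identity~\eqref{HT} characterising $\nu_u$ as the free Gibbs measure with potential $u$; and because the derivative of a convex Kantorovich potential is the monotone optimal transport map in dimension one, this simultaneously produces the pushforward identity $(u')_\sharp \nu_u = \mu$.

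The main obstacle I anticipate is justifying the pointwise Euler-Lagrange equation on the support: one must argue that the Kantorovich potential extracted from duality is actually convex and differentiable in a neighbourhood of $\supp(\nu_u)$, and that $\nu_u$ admits a bounded density with compact support so that $H\nu_u$ is classically defined. For this I would invoke the standard regularity theory for equilibrium measures associated with a strictly convex potential satisfying the growth condition~\eqref{gibbs}, as recalled in Remark~\ref{rem}, together with one-dimensional Brenier--Caffarelli regularity for the monotone transport map. The assumption $\mu \neq \delta_0$ enters precisely to exclude the degenerate case in which $u$ would be affine and $\nu_u$ would collapse to a point mass.
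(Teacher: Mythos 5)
This theorem is not proved in the paper: it is imported verbatim from Bahr and Boschert \cite{BB}, and the paper only records that their method is the free analogue of Santambrogio's variational approach \cite{SA}. Your overall strategy (direct method for $\mathcal{F}=L+T(\cdot,\mu)$, inner variation of $L$, envelope argument through the Kantorovich dual of $T$, identification of the Euler--Lagrange equation with \eqref{HT}, and reading off $(u')_{\sharp}\nu_u=\mu$ from one-dimensional monotone transport) is therefore the right skeleton. However, two steps as written are genuinely flawed. First, the uniqueness argument: $T(\cdot,\mu)$ is \emph{not} convex under linear interpolation of measures --- it is concave (it is $\tfrac12 M_2(\rho)+\tfrac12 M_2(\mu)-\tfrac12 W_2^2(\rho,\mu)$, and a two-atom example, $\rho_0=\delta_{-1}$, $\rho_1=\delta_{1}$, $\mu=\tfrac12(\delta_{-1}+\delta_1)$, gives $T(\tfrac12\rho_0+\tfrac12\rho_1,\mu)=1>0$). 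So ``strict convexity of $L$ plus convexity of $T$'' cannot deliver uniqueness. The workable route is convexity along displacement (quantile) interpolation: in dimension one $T(\cdot,\mu)$ is \emph{affine} along such interpolations, while the logarithmic energy is strictly displacement convex modulo translations, which is what singles out the unique centered minimizer. Relatedly, ``weak continuity of $T(\cdot,\mu)$ on $\mathscr{P}_2$'' is false without uniform control of second moments, and coercivity is not automatic (the crude bounds give $\mathcal{F}(\rho)\geq -\tfrac12\log M_2(\rho)-C$), so tightness of minimizing sequences needs an actual argument exploiting the scaling balance between $-\log\lambda$ and $\lambda T$, which is where $\mu\neq\delta_0$ enters --- not, as you suggest, merely to exclude an affine $u$ at the end.

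Second, the regularity step is circular. The Kantorovich potential $u$ produced by duality is only convex and lower semi-continuous; for a discrete target $\mu$ it is piecewise affine, so you cannot invoke ``standard regularity theory for equilibrium measures associated with a strictly convex potential satisfying \eqref{gibbs}'' --- strict convexity of $u$ is not available, and the statement is supposed to cover all centered $\mu\in\mathscr{P}_2(\mathbb{R})$, $\mu\neq\delta_0$. Absolute continuity, bounded density and compact support of the minimizer must be extracted directly from the variational problem (this is the technical heart of \cite{BB}, as of \cite{SA} classically), and they are needed \emph{before} your first-variation computation makes sense: one needs $\nu_u$ non-atomic to have a transport map rather than a plan, $u$ differentiable $\nu_u$-a.e.\ on its support, and $H\nu_u$ defined. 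Finally, the inner (pushforward) variations you use only yield the Schwinger--Dyson/Frostman condition \emph{on} the support; to conclude that $\nu_u$ is the free Gibbs measure of $u$ (i.e.\ the global minimizer of $\chi_u$) one needs in addition either outer variations giving the complementary inequality off the support, or an appeal to the uniqueness of solutions of the Schwinger--Dyson equation for convex potentials with connected support as in Remark \ref{rem}. With these repairs the proposal would match the cited proof; as it stands, the uniqueness and regularity claims do not go through.
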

\begin{remark}
    It is also easy to deduce since $\mu$ is centered that the derivative of $u$ must changes signs and so that $u(x)\underset{\pm\infty}{\rightarrow}+\infty$.
\end{remark}
\begin{flushleft}
This theorem gives a large class of probability measures which are the pushforward of the free Gibbs measure $\nu_u$ by the function $u'$, and in particular it gives a (complete, with only a small restriction) analogue of Cordero-Erausquin and Klartag's result in the classical case, but only in dimension one. The method of Bahr and Boschert is in fact based on the free analogue of the optimal method transport functional found by Santambrogio in \cite{SA}.
Interestingly, it is indeed possible to adopt the original view of Klartag and Cordero Erausquin \cite{CEK},which is based on a new "above tangent" version of the Prekopá-Leindler inequality; this will be done in another paper and is based on the study of the type of a \textit{free pressure functional} of the Legendre transform of the moment map. 
\bigbreak
Another nice feature of the Bahr and Boschert construction \cite{BB} is that it allows us to derive sharp free transport cost inequalities. This deep result in the classical case, which has known a current deep interest \cite{frade,MikF}, was first witnessed by Fathi in his important paper \cite{FathT} and enlightens a duality between an inequality of convex geometry called the (functional) Santalo inequality and this sharp symmetrized Talagrand inequality. This inequality actually holds in the free case \cite{Di2}, and strengthens the free transport cost inequality discovered by Biane and Voiculescu \cite{BV} for the semicircular distribution.
\end{flushleft}
\bigbreak
More interestingly, and in the classical case, Fathi's paper \cite{mm} discovered a very interesting connection between the {\it moment map} $\varphi$ and the notion of a Stein kernel. In particular, he noticed that the map
\begin{equation}\label{fix}
\mu \mapsto e^{-\varphi}dx,
\end{equation}
where (here $dx$ standard for the Lebesgue measure on $\mathbb{R}^n$ up to a renormalisation constant) has a unique fixed point given by the standard Gaussian measure, for which we recall that the moment map is given by $\frac{\lVert x\rVert_2^2}{2}$. So the moment map already contains a lot of information about how close $\mu$ is to the standard Gaussian measure $\gamma$.
\bigbreak
In particular, and heuristically, if $\varphi(x)\approx \frac{\lVert x\rVert_2^2}{2}$, then we can expect that $\mu \approx \gamma$. Fathi formalised this argument, and has showed that one can construct a Stein kernel thanks to this moment map. It is then easy to bound the quadratic Wasserstein distance to the standard Gaussian by a regularity estimate on the associated Monge-Ampère (K\"ahler-Einstein) PDE thanks to the {\it Wasserstein-Stein discrepancy} inequality of Ledoux, Nourdin and Peccati \cite{LNP}. 
\begin{flushleft}
We can also note from the convexity of $\varphi$ that $\nabla \varphi$ is the {\it Brenier map} sending $e^{-\varphi}dx$ onto $\mu$, and in particular that the moment map $\varphi$ is basically the weak solution of the following variant of the Monge-Ampère equation, called the {\it toric Kähler-Einstein} equation (because it has numerous applications in Kähler geometry, and in particular in the study of differential complex and symplectic structures on toric Fano manifolds, see e.g. Donaldson \cite{DON} for a nice exposition).
\begin{equation}\label{mongA}
e^{-\varphi}=\rho(\nabla \varphi)\det(\Hess \varphi)
\end{equation}
where $\rho$ is the density with respect to the Lebesgue measure of $\mu$, which should be strictly positive on its support. 
\bigbreak
In particular, for $\mu=e^{-\psi}dx$, then $\varphi$ solve (in a weak sense):
\begin{equation}\label{mongB}
    e^{-\varphi}=e^{-\psi(\nabla \varphi)}\det(\Hess \varphi)
\end{equation}
Thus, if we fix $\rho$ as the Gaussian density, then the unique weak solution of the Monge-Ampère PDE \eqref{mongA} is given by $\frac{\lVert x\rVert^2_2}{2}$, which justifies that the Gaussian is the unique fixed point of the map \eqref{fix}.

\begin{theorem}(Fathi, Theorem 2.3 in \cite{mm})\label{3.3}
Let's suppose that $\mu$ has a density $\rho$ w.r.t the Lebesgue measure which is strictly positive on its support, and that the solution $\varphi$ to the PDE \eqref{mongA} is $\mathcal{C}^2$ and supported on the entire space $\mathbb{R}^d$. Then $\Hess\varphi(\nabla \varphi^*)=(\Hess \varphi^*)^{-1}$ is a free Stein kernel for $\mu$ with respect to the standard Gaussian measure.
\end{theorem}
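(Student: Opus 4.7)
The goal is to verify that, for every reasonable test function $f:\mathbb{R}^d\to\mathbb{R}$, the matrix field $A(x)=\Hess\varphi(\nabla\varphi^*(x))$ satisfies the Stein identity relative to the standard Gaussian,
\begin{equation*}
\int x\cdot f(x)\,d\mu(x)=\int \langle A(x)\,\nabla f(x),\operatorname{Id}\rangle\,d\mu(x).
\end{equation*}
My plan is to transport the left-hand side back to the reference measure $e^{-\varphi}dx$ via the Brenier-type map $\nabla\varphi$, integrate by parts using the crucial identity $\nabla\varphi\cdot e^{-\varphi}=-\nabla e^{-\varphi}$, apply the chain rule, and then push forward again by $\nabla\varphi$.

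Concretely, the first step is the change of variables
\begin{equation*}
\int x f(x)\,d\mu(x)=\int \nabla\varphi(y)\,f(\nabla\varphi(y))\,e^{-\varphi(y)}\,dy,
\end{equation*}
which uses only the defining relation $\mu=(\nabla\varphi)_\sharp e^{-\varphi}dx$ from Theorem \ref{th3}. In the second step I recognize $\nabla\varphi(y)e^{-\varphi(y)}=-\nabla e^{-\varphi(y)}$ and integrate by parts coordinatewise; since $\varphi$ is assumed $\mathcal{C}^2$ on all of $\mathbb{R}^d$, is essentially continuous, and $e^{-\varphi}\in L^1$ with the required decay at infinity, no boundary terms arise. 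This yields
\begin{equation*}
\int \nabla\big(f\circ\nabla\varphi\big)(y)\,e^{-\varphi(y)}\,dy=\int \Hess\varphi(y)\,\nabla f(\nabla\varphi(y))\,e^{-\varphi(y)}\,dy,
\end{equation*}
where the equality uses the chain rule $\nabla(f\circ\nabla\varphi)(y)=\Hess\varphi(y)\,\nabla f(\nabla\varphi(y))$. Pushing forward once more via $y=\nabla\varphi^*(x)$ gives
\begin{equation*}
\int \Hess\varphi(\nabla\varphi^*(x))\,\nabla f(x)\,d\mu(x),
\end{equation*}
so $A(x)=\Hess\varphi(\nabla\varphi^*(x))$ is indeed a Stein kernel for $\mu$ relative to $\gamma$. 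The identity $\Hess\varphi(\nabla\varphi^*(x))=(\Hess\varphi^*(x))^{-1}$ is then obtained for free by differentiating the inverse relation $\nabla\varphi\circ\nabla\varphi^*=\Id$, which produces $\Hess\varphi(\nabla\varphi^*(x))\,\Hess\varphi^*(x)=\mathbf{1}$.

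The only delicate point in this scheme is justifying the integration by parts, since $\varphi$ need not be smooth or even finite away from the support of $e^{-\varphi}dx$; this is precisely why the $\mathcal{C}^2$ hypothesis on $\varphi$ together with its essential continuity are used as standing assumptions (as emphasized by Cordero-Erausquin and Klartag \cite{CEK}). Once those regularity conditions are in place, the decay of $e^{-\varphi}$ forced by $\int e^{-\varphi}<\infty$ ensures that the flux at infinity vanishes, so the calculation reduces to a routine composition of the chain rule with a change of variables, and the matrix identity at the end is purely algebraic.
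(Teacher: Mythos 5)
Your proof is correct and is essentially the same argument as the one this paper runs for the free analogue (Theorem \ref{mstein}) — and as Fathi's original proof of the cited classical statement: the integration by parts against $e^{-\varphi}$ via $\nabla\varphi\,e^{-\varphi}=-\nabla e^{-\varphi}$ is exactly the Stein (Schwinger--Dyson-type) identity for the source measure, and combining it with the chain rule and the pushforward change of variables $\mu=(\nabla\varphi)_{\sharp}e^{-\varphi}dx$ produces the kernel $\Hess\varphi(\nabla\varphi^*)=(\Hess\varphi^*)^{-1}$. The only difference is the direction of travel: you start from $\int x f\,d\mu$ and pull back to $e^{-\varphi}dx$, whereas the paper starts from the source-measure identity and substitutes the composed test function $g\circ\mathscr{D}u$ before pushing forward — the same computation read in reverse.
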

\end{flushleft}
In the free context and in the one-dimensional case, it seems plausible that a similar statement should hold by following the same heuristic:
\newline 
The map $\mu \mapsto \nu_u$ admits a unique fixed point given by the standard semicircular distribution (this will be easily seen in section \ref{sect4}, where we will see that $u$ solves a variant of the free Monge-Ampère equation), and thus, heuristically, if $u\approx \frac{x^2}{2}$, we should expect that $\mu\approx \mathcal{S}(0,1)$.
\bigbreak
In fact, thanks to this {\it free moment map}, we can also construct free Stein kernels with respect to the semicircular potential and indeed quantify this convergence in the quadratic Wasserstein distance $W_2$. It is also interesting to note that it has exactly the same form as in the classical case, up to the replacement of the classical differential operators by their free counterparts.
\newline
We will therefore show that in the free context we have a free Stein kernel of the form $(\mathscr{J}\mathscr{D}u^*)^{-1}=\mathscr{J}\mathscr{D}u(\mathscr{D}u^*)$, where $u^*$ denotes the Legendre transform of the free moment map.
\begin{theorem}\label{mstein}
Let's assume that $\mu$ is centered, absolutely continuous with respect to the Lebesgue measure and supported on a compact interval, and let $u$ be the free moment map of $\mu$ with respect to the free Gibbs measure $\nu_{u}$, i.e. $\mu=(u')_{\sharp}\nu_{u}$. If $u$ is $\mathcal{C}^2$ and strictly convex on $\mathbb{R}$, then
\begin{eqnarray}
(x,y)\mapsto (\mathscr{J}\mathscr{D}u^*(x,y))^{-1}&=&\mathscr{J}\mathscr{D}u(\mathscr{D}u^*(x),\mathscr{D}^*u(y))
\nonumber\\
&:=&\frac{x-y}{(u^*)'(x)-(u^*)'(y)}=\frac{x-y}{(u')^{-1}(x)-(u')^{-1}(y)}
\end{eqnarray}
is a free Stein kernel for $\mu$ with respect to the standard semicircular potential $\frac{1}{2}x^2$.
\end{theorem}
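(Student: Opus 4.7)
The plan is to verify the free Stein equation directly by performing a change of variables under the pushforward $\mu = (u')_\sharp \nu_u$ and then applying the Schwinger-Dyson characterization of the free Gibbs measure $\nu_u$. Since $u$ is $\mathcal{C}^2$ and strictly convex, $u'$ is a $\mathcal{C}^1$ strictly increasing homeomorphism onto its image, so $(u')^{-1} = (u^*)'$ is well-defined on $\supp(\mu)$, which legitimizes the notation $A(x,y) = \frac{x-y}{(u^*)'(x)-(u^*)'(y)}$.

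The key computation is to rewrite the right-hand side of the Stein equation. For a smooth test function $f$, writing $x = u'(s)$ and $y = u'(t)$ with $s,t$ in the support of $\nu_u$, we have
\begin{equation}
A(u'(s),u'(t)) = \frac{u'(s) - u'(t)}{s - t} = \mathscr{J}\mathscr{D}u(s,t),\nonumber
\end{equation}
using that $(u')^{-1}(u'(s)) = s$. Combining this with the free difference quotient $\mathscr{J}f(u'(s),u'(t)) = \frac{f(u'(s)) - f(u'(t))}{u'(s) - u'(t)}$ and the chain rule $\mathscr{J}(f\circ u')(s,t) = \mathscr{J}u'(s,t) \cdot \mathscr{J}f(u'(s),u'(t))$, the product telescopes:
\begin{equation}
A(u'(s),u'(t)) \cdot \mathscr{J}f(u'(s),u'(t)) = \frac{f(u'(s))-f(u'(t))}{s-t} = \mathscr{J}(f\circ u')(s,t).\nonumber
\end{equation}

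Thus, applying the pushforward relation $\mu = (u')_\sharp \nu_u$ twice,
\begin{equation}
\iint A(x,y) \mathscr{J}f(x,y)\, d\mu(x)d\mu(y) = \iint \mathscr{J}(f\circ u')(s,t)\, d\nu_u(s)d\nu_u(t).\nonumber
\end{equation}
Now the Schwinger-Dyson equation for the free Gibbs measure $\nu_u$ associated to the convex potential $u$ (cited in the excerpt from Guionnet-Shlyakhtenko and Fathi-Nelson), applied to the test function $g = f \circ u'$, yields
\begin{equation}
\iint \mathscr{J}g(s,t)\, d\nu_u(s)d\nu_u(t) = \int u'(s) g(s)\, d\nu_u(s) = \int u'(s) f(u'(s))\, d\nu_u(s).\nonumber
\end{equation}
Pushing forward once more gives $\int x f(x)\, d\mu(x)$, which is precisely the left-hand side of the free Stein equation with respect to the semicircular potential $\tfrac{1}{2}x^2$ (so that $u'_{\mathrm{sc}}(x) = x$).

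The only subtle point is checking that $g = f\circ u'$ is an admissible test function for the Schwinger-Dyson equation, which follows from $u \in \mathcal{C}^2$ and the fact that $\nu_u$ is compactly supported (so that one only needs $g$ to be $\mathcal{C}^1$ on a neighborhood of $\supp \nu_u$, which holds for any smooth $f$). The strict convexity of $u$ ensures that $A(x,y)$ is well-defined and finite on $\supp(\mu) \times \supp(\mu)$, and in particular the integrand is bounded. No real obstacle arises; the statement is essentially a formal consequence of the pushforward definition of the moment map combined with the variational characterization of $\nu_u$.
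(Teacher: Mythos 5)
Your proposal is correct and follows essentially the same route as the paper's proof: composing the test function with $u'$, telescoping the difference quotients via the chain rule, pushing forward through $\mu=(u')_{\sharp}\nu_u$, and invoking the Schwinger--Dyson equation for $\nu_u$ — you merely run the computation from the Stein-equation side toward Schwinger--Dyson rather than the reverse. The added remarks on admissibility of $g=f\circ u'$ and well-definedness of the kernel under strict convexity are fine and consistent with the paper's implicit use of these facts.
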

\begin{proof}
For a test function $f$, we have:
\begin{equation}
\int u'(x)f(x)d\nu_{u}(x)=\iint\frac{f(x)-f(y)}{x-y}d\nu_{u}(x)d\nu_{u}(y)\nonumber
\end{equation}
Take now $g$ a test function and set $f(x)=g(u'(x))$. Then the previous equation become:
\begin{eqnarray}
\int u'(x)g(u'(x))d\nu_{u}(x)&=&\iint\frac{g(u'(x))-g(u'(y))}{x-y}d\nu_{u}(x)d\nu_{u}(y)\nonumber\\
&=&\iint\frac{g(u'(x))-g(u'(y))}{u'(x)-u'(y)}\frac{u'(x)-u'(y)}{x-y}d\nu_{u}(x)d\nu_{u}(y)\nonumber
\end{eqnarray}
\bigbreak
Now put $\tilde{x}=(u^*)'(x)=(u')^{-1}(x)$ and $\tilde{y}=(u^*)'(y)=(u')^{-1}(y)$, and note that that this change of variables also sends $\mu$ to $\nu_{\mu}$ (and thus also for the product measure $\mu^{\otimes 2}$ and $\nu_u^{\otimes 2}$).Doing the same variable change in the left leg, we then get
\begin{equation}
\int xg(x)d{\mu}=\iint\frac{x-y}{(u^*)'(x)-(u^*)'(y)}\frac{g(x)-g(y)}{x-y}d\mu(x)d{\mu}(y).\nonumber
\end{equation}
And so we arrive at the desired conclusion.
\end{proof}
\qed
\bigbreak
\begin{flushleft}
We also remind to the reader that we denote the $L^2$ Kantorovitch-Wasserstein distance on the space of probability measures
\begin{equation}
        W_2(\mu,\nu)^2:=\inf \left\{\int \lvert x-y\rvert^2d\pi; \pi\in \mathscr{P}(\mathbb{R}^d\times \mathbb{R}^d),\pi(\cdot,\mathbb{R}^d)=\mu,\pi(\mathbb{R}^d,\cdot)=\nu\right\}.\nonumber
    \end{equation}
Now we recall that Cébron proved in \cite{C} that the quadratic Wasserstein distance between a compactly supported probability measure and the standard semicircular distribution is controlled by the free Stein discrepancy (a proposition which continues to hold in the multidimensional case, cf. Proposition 2.5 in \cite{C}, which can also be extended to any positive definite covariance matrix: Lemma 4.16 in \cite{Di}).
\end{flushleft}

\begin{theorem}(WS inequality, Cébron, proposition 2.7 in \cite{C})\label{wsc}
    Let $\mu$ a compactly supported probability measure, then:
    \begin{equation}
        W_2(\mu,\eta)^2\leq \Sigma^*(\mu|\nu_{\frac{x^2}{2}})^2
    \end{equation}
\end{theorem}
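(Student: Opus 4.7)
The plan is to adapt the Ledoux--Nourdin--Peccati ``Wasserstein--Stein discrepancy'' inequality to the free setting by the same dynamical strategy: interpolate between $\mu$ and the semicircular target $\eta$ along the free Ornstein--Uhlenbeck flow (for which $\eta$ is stationary, and the unique fixed point of the map $\mu\mapsto \nu_u$ discussed in Section~\ref{4}), then bound the Benamou--Brenier velocity of the interpolation in $L^2$ by the Stein defect $A-1$ of $\mu$ against the semicircular potential $u(x)=x^2/2$.

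First I would fix an admissible free Stein kernel $A$ of $\mu$ relative to $u(x)=x^2/2$, so that
\begin{equation}
\int x\, f(x)\, d\mu(x) \;=\; \iint A(x,y)\, \mathscr{J}f(x,y)\, d\mu(x)\, d\mu(y)
\end{equation}
for every smooth test function $f$. Next, I would introduce the free O--U interpolation $X_t = e^{-t} X_0 + \sqrt{1-e^{-2t}}\, S$, with $X_0$ of law $\mu$ and $S$ a standard semicircular element freely independent from $X_0$, and set $\mu_t := \law(X_t)$, so that $\mu_0 = \mu$ and $\mu_t \to \eta$ in $W_2$ as $t \to \infty$. The dynamic (free) Benamou--Brenier representation then gives
\begin{equation}
W_2(\mu, \eta) \;\leq\; \int_0^\infty \|v_t\|_{L^2(\mu_t)}\, dt,
\end{equation}
where $v_t$ is the velocity field driving the continuity equation for $\mu_t$. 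The key step is to identify $v_t$ through the Stein equation of $\mu$ and to establish a contraction estimate
\begin{equation}
\|v_t\|_{L^2(\mu_t)}^2 \;\leq\; e^{-2t}\, \iint \bigl(A(x,y) - 1\bigr)^2\, d\mu(x)\, d\mu(y).
\end{equation}
Using $\int_0^\infty e^{-t}\, dt = 1$, then taking the infimum over admissible kernels $A$ and squaring, yields the claim.

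The main obstacle is the contraction estimate on $v_t$, which is the free analogue of the classical commutation $\nabla P_t = e^{-t}\, P_t \nabla$ between the gradient and the Ornstein--Uhlenbeck semigroup. In the free case it takes the form $\mathscr{J} P_t = e^{-t}(P_t \otimes P_t)\, \mathscr{J}$, and rests on the identification of the free O--U generator with the number operator on the full Fock space, together with Voiculescu's free Malliavin calculus. A more hands-on route, which I believe is close to C\'ebron's original argument, bypasses the semigroup entirely: one builds an explicit coupling of $\mu$ and $\eta$ directly from $A$ and a free convolution with a semicircular, then estimates its quadratic cost by Cauchy--Schwarz applied to the Stein equation, reducing matters to bookkeeping at the level of non-commutative random variables. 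In either route, the technical core is to convert the Stein test-function identity into an $L^2$ bound on a concrete transport plan between $\mu$ and $\eta$.
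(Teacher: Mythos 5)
First, a point of comparison: the paper does not prove this statement at all --- it is quoted verbatim from C\'ebron (Proposition 2.7 in \cite{C}), so the only ``proof'' in the paper is the citation. Your proposal is therefore an attempt to reprove an imported result, and the strategy you outline (interpolate along the free Ornstein--Uhlenbeck flow $X_t=e^{-t}X_0+\sqrt{1-e^{-2t}}\,S$, bound the length of the curve $t\mapsto\mu_t$ by the Stein defect, integrate in $t$) is indeed the Ledoux--Nourdin--Peccati scheme that C\'ebron adapts to the free setting, so the plan is pointed in the right direction.

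However, as written the proposal has a genuine gap: the step you yourself call ``the key step,'' namely the bound $\norm{v_t}_{L^2(\mu_t)}^2\leq e^{-2t}\iint(A-1)^2\,d\mu\,d\mu$, is asserted rather than proved, and it is not the estimate that the computation actually produces. Identifying $v_t$ requires applying the Stein identity to the map $x\mapsto f(e^{-t}x+\sqrt{1-e^{-2t}}\,S)$ and then using (free) integration by parts for the independent/free semicircular component to convert the resulting second-order (Hessian-type, $\partial\otimes\partial$) term back into a first-order term against $\mu_t$; carrying this out gives a velocity controlled by $\frac{e^{-2t}}{\sqrt{1-e^{-2t}}}\,\lVert A-1\rVert_{L^2(\mu\otimes\mu)}$, not by $e^{-t}\lVert A-1\rVert$. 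The two prefactors happen to have the same integral over $(0,\infty)$, so the final constant is unaffected, but your claimed pointwise contraction (which would have to hold uniformly, including near $t=0$ where the correct bound blows up) is unjustified, and neither of the two ``routes'' you mention (the commutation $\mathscr{J}P_t=e^{-t}(P_t\otimes P_t)\mathscr{J}$, or an explicit coupling) is carried out --- this is precisely where all the work in C\'ebron's proof lies. Two further points need attention: the ``dynamic (free) Benamou--Brenier representation'' you invoke is not available off the shelf for the Biane--Voiculescu distance, so you should either observe that in this one-variable statement $W_2$ is the classical Kantorovich distance between compactly supported measures on $\mathbb{R}$ (as defined just before the theorem), for which the classical curve-length/continuity-equation argument applies, or work with an explicit non-commutative coupling as C\'ebron does; and the conditional-expectation step defining $v_t$ must be done with the free conditional expectation onto $W^*(X_t)$, using freeness of $S$ from $X_0$, which again requires an argument rather than an appeal to the classical picture.
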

\begin{flushleft}
    In particular, this allows us to bound the free Stein discrepancy by an estimate of the non-commutative Hessian of $u$ in the non-commutative Sobolev space $H^{1}(\nu_u)$. More precisely, as we'll see in section \ref{sect4}, $u$ (if it is smooth enough) will necessarily be a solution of a variant of the free Monge-Ampère equation \cite{GS}, so the Stein discrepancy can be bounded by a suitable regularity estimate on a {\it free Kähler-Einstein}-type equation, as shown in the sequel.
\end{flushleft}

\begin{corollary}\label{cor1}
If $\mu$ satisfies the hypothesis of the previous theorem, then the quadratic Wasserstein distance between $\mu$ and the standard semicircular distribution $\mathcal{S}(0,1)$ is bounded as follows
\begin{eqnarray}
W_2(\mu,\eta)^2 &\leq &\iint\bigg[\frac{x-y}{(u^*)'(x)-(u^*)'(y)}-1\bigg]^2d\mu(x)d{\mu}(y)\nonumber\\
&= & \iint\bigg[\frac{u'(x)-u'(y)}{x-y}-1\bigg]^2d\nu_{u}(x)d{\nu}_{u}(y):=\lVert \mathscr{D}u-Id\rVert_{H^1(\nu_u)}^2\nonumber,
\end{eqnarray}
\end{corollary}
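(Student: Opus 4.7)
The plan is essentially to chain two already-established facts: Cébron's Wasserstein--Stein inequality (Theorem \ref{wsc}) and the explicit free Stein kernel produced in Theorem \ref{mstein}, and then recognize the resulting double integral as a weighted non-commutative Sobolev norm after a change of variables driven by the pushforward relation $\mu = (u')_{\sharp}\nu_u$.

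Concretely, I would first invoke Theorem \ref{wsc} to get $W_2(\mu,\eta)^2 \leq \Sigma^*(\mu\,|\,\nu_{x^2/2})^2$. Since the free Stein discrepancy is an infimum over all admissible Stein kernels, any explicit admissible kernel immediately yields an upper bound on $\Sigma^*$. Theorem \ref{mstein}, applied under the hypothesis that $u$ is $\mathcal{C}^2$ and strictly convex, exhibits such a kernel explicitly, namely
\[
A(x,y) \;=\; \frac{x-y}{(u^*)'(x)-(u^*)'(y)}.
\]
Plugging this into the defining infimum for $\Sigma^*$ gives the first displayed inequality of the corollary.

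For the second equality I would perform the change of variables $\tilde x = (u^*)'(x)$, $\tilde y = (u^*)'(y)$. Strict convexity and $\mathcal{C}^2$ regularity of $u$ guarantee that $u^*$ is also $\mathcal{C}^2$ with $(u^*)' = (u')^{-1}$, so this change of variables is a $\mathcal{C}^1$ diffeomorphism with inverse $u'$. Because $\mu = (u')_{\sharp}\nu_u$, the product measure $\mu\otimes\mu$ is pulled back to $\nu_u\otimes\nu_u$, and the integrand becomes
\[
\left[\frac{u'(\tilde x)-u'(\tilde y)}{\tilde x-\tilde y}-1\right]^2 \;=\; \bigl[\mathscr{J}\mathscr{D}u(\tilde x,\tilde y)-1\bigr]^2,
\]
which is exactly the integrand defining $\lVert \mathscr{D}u - Id\rVert_{H^1(\nu_u)}^2$ once one unravels the definition of the non-commutative weighted $H^1$ norm.

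There is essentially no technical obstacle: the entire content lies in Theorems \ref{wsc} and \ref{mstein}, and the only thing to verify is the legitimacy of the change of variables, which is immediate from the $\mathcal{C}^2$ strict-convexity hypothesis on $u$ that is shared with Theorem \ref{mstein}. The corollary is thus a direct packaging of the two preceding results into a bound expressed purely in terms of the regularity of the free moment map.
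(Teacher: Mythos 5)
Your proposal is correct and is exactly the argument the paper intends: combine the Wasserstein--Stein inequality of Theorem \ref{wsc} with the explicit kernel $A(x,y)=\frac{x-y}{(u^*)'(x)-(u^*)'(y)}$ from Theorem \ref{mstein} (any admissible kernel bounds the infimum defining $\Sigma^*$), then use $\mu=(u')_{\sharp}\nu_u$ to rewrite the double integral as $\iint[\mathscr{J}\mathscr{D}u-1]^2\,d\nu_u^{\otimes 2}=\lVert\mathscr{D}u-\mathrm{Id}\rVert_{H^1(\nu_u)}^2$. The change of variables is justified precisely as you say, by the $\mathcal{C}^2$ strict convexity of $u$, so nothing is missing.
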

This means that if we are able to bound in $L^2$ the non-commutative Hessian $\mathscr{J}\mathscr{D}{u}$ averaging against $\nu_u^{\otimes 2}$, or equivalently estimates in $H^1(\nu_u)$, we are able to obtain estimates of transport distances to the semicircular law. 
\begin{flushleft}
In fact, following Fathi, Cébron and Mai \cite{FCM}, we denote the first-order non-commutative Sobolev space associated with a (non-commutative) distribution $\lambda$, (assumed to be diffuse to simplify exposure) as
\begin{equation}
    H^1(\lambda):=\left\{f:\mathbb{R}\mapsto \mathbb{R}, \iint\bigg(\frac{f(x)-f(y)}{x-y}\bigg)^2d\lambda(x)d\lambda(y)<\infty\right\},
\end{equation}
and its associated semi-norm
\begin{equation}
   \lVert f\rVert_{H^1(\lambda)}^2:=\iint\bigg(\frac{f(x)-f(y)}{x-y}\bigg)^2d\lambda(x)d\lambda(y).\nonumber
\end{equation}
\textbf{N.B:} To obtain a true Hilbert space, we recall that we have in fact to consider the quotient of $H^1(\lambda)$ by $N^1(\lambda)=\left\{(f,g)/ \mathscr{J}f=\mathscr{J}g\:, \lambda^{\otimes 2}-a.e\right\} $).
\bigbreak
In a shorthand, for $f,g \in H^1(\mu)$, we also denote
\begin{equation}
   \langle f,g\rangle_{H^1(\lambda)}=\iint\frac{f(x)-f(y)}{x-y}\frac{g(x)-g(y)}{x-y}d\lambda(x)d\lambda(y)=\iint \mathscr{J}f\cdot \mathscr{J}g\: d\lambda^{\otimes 2}\nonumber
\end{equation}
\end{flushleft}

However, at the moment we lack a free analogue of the important Klartag universal regularity estimates on convex solutions of the previous K\"ahler-Einstein PDE (based on earlier work by Kolesnikov \cite{KlaKol}), which in the classical case allows to obtain universal quantitative estimates (depending only on $p\geq 2$ and not on the target measure) on the second derivatives (in any direction) of the {\it moment map} $\varphi$ in the weighted Sobolev $L^p$-spaces $L^p(e^{-\varphi}dx), p\geq 2$ for the class of compactly supported log-concave measures $\mu$. Surprisingly, Klartag also proved a control on the third-order derivatives of $\varphi$, but only for now in $L^2$. He moreover proved a uniform bound $\lVert\Hess\varphi\rVert_{op}\leq c^{-1}$ under the uniform convexity assumption $\Hess V\geq c$.
\begin{proposition}(Klartag, \cite{Klart})
    Let $\mu$ be a log-concave probability measure supported on an open bounded convex set, and with a density bounded from above and below. Then the essentially continuous convex function $\varphi$ for which $\mu$ is the moment measure is $\mathcal{C}^2$ and satisfies for any $p\geq 1$ and $\theta \in \mathbb{S}^{d-1}$:
    \begin{equation}
        \int \lvert \langle \Hess \varphi(\nabla \varphi^*)\theta,\theta\rangle\lvert^p d\mu\leq 8^p\:p^{2p}\bigg(\int (x\cdot \theta)^2 d\mu\bigg)^p,
    \end{equation}
\end{proposition}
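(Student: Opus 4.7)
The plan is first to pull the inequality back to the reference space $(\R^d, e^{-\varphi}dx)$ via the Brenier map $\nabla\varphi$: since $\mu=(\nabla\varphi)_{\sharp}(e^{-\varphi}dx)$ and $\nabla\varphi^*=(\nabla\varphi)^{-1}$, the substitution $x=\nabla\varphi(y)$ identifies $x\cdot\theta$ with $\partial_\theta\varphi(y)$ and $\langle\Hess\varphi(\nabla\varphi^*(x))\theta,\theta\rangle$ with $\partial_\theta^2\varphi(y)$. The claim thereby reduces to the weighted Sobolev bound
\begin{equation}
\int(\partial_\theta^2\varphi)^p\,e^{-\varphi}\,dx\leq 8^p p^{2p}\left(\int(\partial_\theta\varphi)^2 e^{-\varphi}dx\right)^p.
\end{equation}
A single integration by parts gives $\int \partial_\theta^2\varphi\, e^{-\varphi}dx = \int(\partial_\theta\varphi)^2 e^{-\varphi}dx$, which settles the case $p=1$ with constant one, independently of any log-concavity; this is the Pythagoras-type identity already noted by Cordero-Erausquin and Klartag for moment measures.

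To go beyond $p=1$ I would differentiate the logarithm of the Kähler-Einstein PDE $-\varphi=-\psi(\nabla\varphi)+\log\det(\Hess\varphi)$ (with $\psi=-\log\rho$, convex by hypothesis) in the direction $\theta$. Using $\partial_\theta\Hess\varphi=\Hess(\partial_\theta\varphi)$ together with $\Hess\varphi\cdot\theta=\nabla(\partial_\theta\varphi)$, this yields the linear elliptic identity
\begin{equation}
\Tr\bigl((\Hess\varphi)^{-1}\Hess u_\theta\bigr)-\nabla\psi(\nabla\varphi)\cdot\nabla u_\theta=-u_\theta,\qquad u_\theta:=\partial_\theta\varphi,
\end{equation}
so every directional derivative of the moment map is a $(-1)$-eigenfunction of the Witten-type operator naturally attached to the Hessian manifold $(\R^d,\Hess\varphi,e^{-\varphi}dx)$. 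This is exactly the classical equation $(2.1)$ of Klartag-Kolesnikov whose free analogue the paper will later discuss.

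The higher-order estimate is then produced by iterating this eigen-equation against powers of $u_\theta$ in the weighted measure $e^{-\varphi}dx$. Testing against $u_\theta^{2k-1}e^{-\varphi}$ and integrating by parts yields, on one side, the $L^{2k}$-norm of $u_\theta$, and on the other, a quadratic form in $\nabla u_\theta=\Hess\varphi\cdot\theta$, with an additional positive contribution $\int u_\theta^{2k-1}\langle\Hess\psi(\nabla\varphi)\nabla u_\theta,\nabla u_\theta\rangle\,e^{-\varphi}\geq 0$ coming from the convexity of $\psi$. Applying Cauchy-Schwarz to the cross terms produces the recursion $a_{k+1}\leq Ck^2\,a_k^{1+1/k}$ for $a_k=\int(\partial_\theta^2\varphi)^k e^{-\varphi}$, and unrolling collects precisely the factorial $p^{2p}$ together with a geometric constant $8^p$; the base of the recursion is the $L^2$ Brascamp–Lieb bound which, via $\nabla u_\theta=\Hess\varphi\,\theta$, recovers $\int(\partial_\theta\varphi)^2 e^{-\varphi}dx=\int(x\cdot\theta)^2 d\mu$.

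The main obstacle is the sharp bookkeeping of the constants in the iteration: getting the polynomial factor $p^{2p}$ rather than a super-polynomial one relies on the curvature-dimension estimate $\Ric_{g_\varphi}+\Hess_{g_\varphi}\varphi\geq \tfrac12 g_\varphi$ of the Hessian manifold associated with a log-concave moment measure due to Klartag-Kolesnikov, which ensures that each inductive step loses only a linear, rather than quadratic, factor in $k$. The $\mathcal{C}^2$-regularity of $\varphi$ needed to legitimize the iterated integrations by parts, together with the decay at infinity required to kill all boundary terms, is secured by the Berman–Berndtsson and Cafarelli regularity theory under the density and convex-support assumptions on $\mu$ recalled in the excerpt; essential continuity of $\varphi$ is what ensures there is no boundary contribution at $\partial\{\varphi<\infty\}$.
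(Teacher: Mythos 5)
The paper does not actually prove this proposition: it is quoted from Klartag \cite{Klart}, and the closest the paper comes to an argument is the heuristic paragraph later in Section \ref{sect4}, which describes precisely the route you take (pass to the Hessian manifold $(\R^d,\Hess\varphi,e^{-\varphi}dx)$, observe that $u_\theta=\partial_\theta\varphi$ satisfies $L u_\theta=-u_\theta$ for the weighted Laplacian, invoke Kolesnikov's curvature bound $\Ric\geq 1/2$, and conclude by $L^p$ estimates for gradients of eigenfunctions in positive curvature). Your reduction by the change of variables $x=\nabla\varphi(y)$, the $p=1$ identity, and the derivation of the eigen-equation by differentiating the logarithmic Monge--Amp\`ere equation are all correct, and the identity $\langle(\Hess\varphi)^{-1}\nabla u_\theta,\nabla u_\theta\rangle=\partial_\theta^2\varphi$, which you use implicitly, is indeed the linchpin of the strategy.

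However, the mechanism you give for $p\geq 2$ does not close as described. Testing $Lu_\theta=-u_\theta$ against $u_\theta^{2k-1}e^{-\varphi}$ and integrating by parts yields
\begin{equation}
\int u_\theta^{2k}e^{-\varphi}\,dx=(2k-1)\int u_\theta^{2k-2}\,\langle(\Hess\varphi)^{-1}\nabla u_\theta,\nabla u_\theta\rangle\,e^{-\varphi}\,dx,
\end{equation}
i.e.\ mixed moments of $u_\theta$ and of $\Gamma(u_\theta)=\partial_\theta^2\varphi$; no application of Cauchy--Schwarz converts this into a recursion purely in $a_k=\int(\partial_\theta^2\varphi)^k e^{-\varphi}dx$. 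To reach $\int\Gamma(u_\theta)^p$ one needs, in addition, (a) control of the moment growth $\lVert u_\theta\rVert_{L^{2p}}\lesssim\sqrt{p}\,\lVert u_\theta\rVert_{L^2}$ of the eigenfunction, which comes from hypercontractivity (log-Sobolev) furnished by $\Ric\geq\tfrac12$, and (b) a test against $\Gamma(u_\theta)^{p-1}$ combined with the Bochner formula, where the third-order term $\Gamma(\Gamma(u_\theta),u_\theta)$ is absorbed only thanks to the inequality $\Gamma_2\geq\tfrac12\Gamma$. So the curvature bound is structurally indispensable for the estimate to hold at all, not merely a device for getting the sharp factor $p^{2p}$, as your last paragraph suggests. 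Moreover, the positivity you claim for $\int u_\theta^{2k-1}\langle\Hess\psi(\nabla\varphi)\nabla u_\theta,\nabla u_\theta\rangle e^{-\varphi}dx$ is unjustified: $\mu$ is centered, so $u_\theta$ has zero mean and changes sign, and $u_\theta^{2k-1}$ is not of one sign; the log-concavity of $\mu$ enters only through Kolesnikov's Ricci computation, not through this term. Finally, the recursion $a_{k+1}\leq Ck^2 a_k^{1+1/k}$ and the exact constants $8^p p^{2p}$ are asserted rather than derived. As it stands your proposal is a faithful roadmap of Klartag's argument (and of the paper's heuristic), but the two analytic ingredients that make it work --- the hypercontractive moment bounds for the eigenfunction and the $\Gamma_2$ (Bochner) step --- are missing or misattributed.
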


\section{A free toric \textrm{Kähler}-Einstein equation}\label{sect4}
\begin{flushleft}
The aim of this section is to show that the associated convex function $u$ of the variational problem considered by Bahr and Boschert \eqref {2.5} is in fact a solution of a variant of a free Monge-Ampère equation, and therefore to deduce that we can interpret the bound obtained for $W_2$ as a suitable regularity estimate on this free Monge-Ampère equation. 
\bigbreak
First we need to derive {\it a la Guionnet and Shlyakhtenko} a {\it free toric Kähler-Einstein} equation (Section 1.6 in \cite{GS}) in the smooth and strictly convex setting. Indeed, we now assume that the target measure $\mu$ is itself a free Gibbs measure associated with a smooth uniformly convex potential $u$ with $u''(x)>\kappa, \forall x \in \mathbb{R}$ for some $\kappa>0$, and so that $\mu:=\nu_u$, which we will assume to be centered, i.e. $\int xd\mu(x)=0$ (e.g. this condition is ensured if $u$ is even). We also recall that $\mu$ is supported on a compact interval $K=[a,b]$. In this case $\mu$ will itself be a free moment measure for some convex function $\varphi$ which we will assume for now to be smooth enough, which means at least of class $\mathcal{C}^4$ (we will explain the reason in detail in the next paragraphs).
\bigbreak
In the following, all improper integrals are taken in the sense of principal values, and we recall that for a function $\varphi$ which is $\mathcal{C}^2$ and strictly convex $\varphi''>0$, we have $\mathscr{J}\mathscr{D}\varphi>0$.
\bigbreak
Since $\mu:=\nu_u$ is a free Gibbs measure associated with a potential $u$ at least of class $\mathcal{C}^3$, then the following holds (see e.g. Ledoux, Popescu, theorem $1$ in \cite{LP}):
\begin{eqnarray}\label{39}
    2\int\frac{1}{x-y}d\mu(y)=u'(x),\:\:\mbox{$\forall x\in \supp(\mu)$},\nonumber
\end{eqnarray}
Then we denote by $\varphi$ the {\it free moment map} of $\mu$ (up to translations),
\newline
We then have by replacing $x$ and $y$ respectively by $\varphi'(x)$ and $\varphi'(y)$, since $\mu=(\varphi')_{\sharp}\nu_{\varphi}$:
\begin{equation}
    2\int \frac{1}{\varphi'(x)-\varphi'(y)}d\nu_{\varphi}(y)=u'(\varphi'(x)),\:\mbox{$\forall x\in \supp(\nu_{\varphi})$}\nonumber
\end{equation}
which can be rewritten as:
\begin{equation}
    2\int \frac{1}{x-y}\frac{1}{\mathscr{J}\mathscr{D}\varphi(x,y)}d\nu_{\varphi}(y)=u'(\varphi'(x)),\:\mbox{$\forall x\in \supp(\nu_{\varphi})$}\nonumber
\end{equation}
Multiplying by $\varphi''(x)$ on both side of this equation, we find:
\begin{equation}\label{kh}
    2\int \frac{1}{x-y}\frac{\varphi''(x)}{\mathscr{J}\mathscr{D}\varphi(x,y)}d\nu_{\varphi}(y)=\varphi{''}(x)\cdot u'(\varphi'(x)).
\end{equation}
Now subtracting on both side of \eqref{kh} by $\varphi'(x)$, using that $2\int \frac{1}{x-y}d\nu_{\varphi}(y)=\varphi'(x), \forall x \in\supp(\nu_{\varphi})$ in the left-hand side, we get
\begin{eqnarray}\label{KalEin}
    2\int \frac{1}{x-y}\frac{\varphi''(x)-\mathscr{J}\mathscr{D}\varphi(x,y)}{\mathscr{J}\mathscr{D}\varphi(x,y)}d\nu_{\varphi}(y)&=&\varphi''(x)\cdot u'(\varphi'(x))-\varphi'(x)\nonumber\\
    &=& [u(\varphi'(x))-\varphi(x)]',
\end{eqnarray}
Simple algebra shows that we have the following relations between \begin{equation}
\partial_x(\mathscr{J}\mathscr{D}\varphi(x,y))=\frac{1}{x-y}[\varphi''(x)-\mathscr{J}\mathscr{D}\varphi(x,y)]\nonumber
\end{equation}
And 
\begin{equation}\nonumber
    \int \frac{1}{x-y}\frac{\varphi''(x)-\mathscr{J}\mathscr{D}\varphi(x,y)}{\mathscr{J}\mathscr{D}\varphi(x,y)}d\nu_{\varphi}(y)=\partial_x\int \log(\mathscr{J}\mathscr{D}\varphi(x,y))d\nu_{\varphi}(y)\nonumber,
\end{equation}
Thus we reach, \begin{eqnarray}\label{KalEin2}
    2\partial_x\int \log(\mathscr{J}\mathscr{D}\varphi(x,y))d\nu_{\varphi}(y)&=&\varphi''(x)\cdot u'(\varphi'(x))-\varphi'(x)\nonumber\\
    &=& [u(\varphi'(x))-\varphi(x)]',
\end{eqnarray}
And finally by integrating \eqref{KalEin2}, we arrive to:
\begin{eqnarray}\label{4.3}
2\int\log(\mathscr{J}\mathscr{D}\varphi(x,y))d\nu_{\varphi}(y)=u(\varphi'(x))-\varphi(x)+\const.
\end{eqnarray}
The constant can be fixed by requiring that both sides of the equation vanish at $x=0$.
\bigbreak
This last equation looks very much like the {\it toric K\"ahler-Einstein} equation \eqref{mongA} in its logarithmic form, for an initial measure with density $e^{-\psi}$:
\begin{eqnarray}
    \log(\det \Hess \varphi)=\psi(\nabla \varphi)-\varphi.
\end{eqnarray}
\begin{flushleft}
    Now we can see that the fixing of $u=\frac{x^2}{2}$ in \eqref{4.3} (the constant disappears), so that $\mu: =\nu_{\frac{x^2}{2}}$ is the standard semicircular distribution, leads necessarily to $\varphi=\frac{x^2}{2}$, so that the unique fixed point of the map $\mu\mapsto \nu_{\varphi}$ is necessarily the quadratic potential $\varphi=\frac{x^2}{2}$, which justifies the interpretation given in the corollary \eqref{cor1} that the transport distances to the semicircular law are controlled by a suitable regularity estimate on the weak solution of a variant of the free Monge-Ampère equation. This result implies in particular the stability of the fixed point of the free Monge-Ampère equation.
    \bigbreak
    \begin{flushleft}
In the classical case, once the existence of such an optimal transport map was proved, the question of the regularity of the transport map was raised. Cafarelli, in several articles, e.g. \cite{Cafa2,Cafa}, gives regularity results about the transport map under various assumptions on the target and source measures and proves the well-known Caffarelli contraction theorem, which states that the Brenier map that sends the Gaussian measure to a uniformly log-concave measure is Lipschitz. Klartag in \cite{Klart} actually proved the moment analogue of Caffarelli's result, which we restate here.
    \begin{theorem}(Klartag, proposition 2 in \cite{Klart})
    The moment map $\varphi$ of a centered uniformly log-concave probability measure $\mu:=e^{-V}dx$, where $\Hess V\geq \epsilon Id$, for some $\epsilon>0$, has a uniformly bounded Hessian,
    \begin{equation}
        \lVert\Hess \varphi\rVert_{op}:=\underset{x\in \mathbb{R}^d}{\sup}\lVert\Hess \varphi(x)\rVert_{op}<\epsilon^{-1}. 
    \end{equation}
    \end{theorem}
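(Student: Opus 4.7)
The plan is to mimic Caffarelli's maximum-principle argument for his contraction theorem, but applied to the toric Kähler--Einstein equation satisfied by the moment map $\varphi$ rather than to the Brenier PDE with Gaussian source. Taking logarithms in the Monge--Ampère relation \eqref{mongB} (and assuming, via the Caffarelli and Berman--Berndtson regularity results already invoked in the excerpt, that $\varphi\in\mathcal{C}^\infty$ with $H:=\Hess\varphi>0$ on the whole of $\mathbb{R}^n$), the moment map solves
\begin{equation}\label{klartagKE}
\log\det H(x) \;=\; V(\nabla\varphi(x)) - \varphi(x) + \const.
\end{equation}

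The idea is to fix a unit direction $\theta\in S^{n-1}$, apply $\partial_{\theta\theta}$ to \eqref{klartagKE}, and then exploit the linearised Monge--Ampère operator $L := H^{ij}\partial_{ij}$, which is uniformly elliptic wherever $H>0$. On the left, the standard formula for the second variation of $\log\det$ gives
\[
\partial_{\theta\theta}\log\det H \;=\; \Tr\!\big(H^{-1}\partial_{\theta\theta}H\big) \;-\; \Tr\!\big((H^{-1/2}(\partial_\theta H)H^{-1/2})^2\big),
\]
whose second piece is non-positive and whose first piece is exactly $L[\partial_{\theta\theta}\varphi]$. On the right, the chain rule produces
\[
(\Hess V)(\nabla\varphi)[H\theta,H\theta] \;+\; \nabla V(\nabla\varphi)\cdot\partial_{\theta\theta}\nabla\varphi \;-\; \partial_{\theta\theta}\varphi,
\]
and the first term is bounded below by $\epsilon|H\theta|^{2}$ by the hypothesis $\Hess V\geq \epsilon\,\Id$. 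Combining these two identities yields the pointwise inequality
\[
L[\partial_{\theta\theta}\varphi] \;\geq\; \epsilon\,|H\theta|^{2} \;+\; \nabla V(\nabla\varphi)\cdot\partial_{\theta\theta}\nabla\varphi \;-\; \partial_{\theta\theta}\varphi.
\]

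Now I would run a maximum-principle argument on $w(x):=\lambda_{\max}(H(x))$. Assuming for the moment that the supremum of $w$ is attained at some interior point $x_0$, and picking a unit eigenvector $\theta_0$ with $H(x_0)\theta_0 = w(x_0)\theta_0$, the scalar function $x\mapsto \langle H(x)\theta_0,\theta_0\rangle = \partial_{\theta_0\theta_0}\varphi(x)$ attains its global maximum at $x_0$. Hence $\nabla(\partial_{\theta_0\theta_0}\varphi)(x_0)=0$, killing the first-order term $\nabla V(\nabla\varphi)\cdot \partial_{\theta_0\theta_0}\nabla\varphi$, and $L[\partial_{\theta_0\theta_0}\varphi](x_0)\leq 0$ since $\Hess(\partial_{\theta_0\theta_0}\varphi)(x_0)$ is negative semidefinite and $H^{-1}>0$. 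Using $|H(x_0)\theta_0| = w(x_0)$, the inequality collapses to
\[
0 \;\geq\; \epsilon\,w(x_0)^{2} - w(x_0),
\]
which is exactly the sought bound $\lVert \Hess\varphi\rVert_{\mathrm{op}} = w(x_0)\leq \epsilon^{-1}$.

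The main obstacle is justifying that $w$ really attains its supremum at an interior point of $\mathbb{R}^n$ rather than escaping to infinity: this is the standard difficulty of maximum-principle arguments in a non-compact setting, exacerbated here by the fact that $\varphi$ is only essentially continuous a priori and its Hessian may be only marginally controlled at infinity. The natural remedy is a two-step exhaustion: first, use that $\nabla\varphi$ pushes $e^{-\varphi}dx$ forward onto the uniformly log-concave probability $\mu=e^{-V}dx$ (which has sub-Gaussian tails under $\Hess V\geq \epsilon$) to show that $\nabla\varphi$ stays in a set of Gaussian-type decay for $\mu$, which in turn prevents $w$ from concentrating its supremum at infinity; second, if residual escape persists, localise the argument on a ball $B_R$ with a penalty $\delta\lvert x\rvert^{2}$ added to $\varphi$, run the maximum principle on the modified potential, and let $\delta\to 0$ and $R\to\infty$ while preserving the inequality displayed above.
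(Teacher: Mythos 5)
Your computational core is correct, and it is exactly the Caffarelli-type maximum-principle scheme behind Klartag's original argument — the same scheme this paper adapts (in dimension one) to prove the free contraction theorem, Proposition \ref{cafarelli}: take the logarithm of \eqref{mongB}, differentiate twice in a fixed direction, identify $\Tr\bigl(H^{-1}\partial_{\theta\theta}H\bigr)$ with the linearised operator $L[\partial_{\theta\theta}\varphi]$, discard the non-positive $-\Tr\bigl((H^{-1/2}(\partial_\theta H)H^{-1/2})^2\bigr)$ term, bound the target term below by $\epsilon\lvert H\theta\rvert^2$, and freeze the top eigenvector at an interior maximum of $\lambda_{\max}(\Hess\varphi)$ to collapse everything to $\epsilon\,w(x_0)^2\leq w(x_0)$. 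If an interior maximum exists and $\varphi$ is smooth, this does give the bound (up to the cosmetic point that it yields $\leq\epsilon^{-1}$ rather than the strict inequality displayed).

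The genuine gap is the attainment step, and the remedies you sketch do not close it. In the paper's free one-dimensional proof the source and target measures are compactly supported, which forces $\varphi''\to 0$ at $\pm\infty$ and makes the maximum automatically attained; here $\mu=e^{-V}dx$ has full support, and nothing you have said rules out that $\sup_x\lambda_{\max}(\Hess\varphi(x))$ is only approached as $\lvert x\rvert\to\infty$. Your first remedy (sub-Gaussian tails of $\mu$ control $\nabla\varphi$) constrains the \emph{range} of $\nabla\varphi$, not where its Jacobian is large, so it does not prevent escape to infinity. Your second remedy is structurally flawed: adding $\delta\lvert x\rvert^2$ to $\varphi$ produces a function that no longer solves the toric K\"ahler--Einstein equation for $\mu$, so the differential inequality on which the whole argument rests is lost; a workable approximation must instead perturb the \emph{target} (e.g.\ replace $\mu$ by better-behaved approximations keeping $\Hess V\geq\epsilon\,\Id$) and then invoke existence, regularity and stability of the corresponding moment maps as the approximation is removed — which is precisely the technical content of Klartag's proof and is not supplied here. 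Relatedly, the smoothness you assume at the outset is itself not free: the Berman--Berndtsson/Caffarelli regularity quoted in this paper is stated for compactly supported densities bounded above and below, so in the present unbounded setting the $\mathcal{C}^{\infty}$ (or even $\mathcal{C}^{2}$) regularity of $\varphi$ on all of $\mathbb{R}^n$ is an additional input that must be justified before the maximum principle can be run.
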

        From this statement Fathi deduced that in this case the moment Stein kernel is uniformly bounded.
\begin{corollary}(Fathi, Corollary 2.4 in \cite{mm})
    Suppose $\mu$ is uniformly log-concave, i.e. $\mu=e^{-V}dx$, with $\Hess V\geq \epsilon Id$, for some $\epsilon>0$, then its moment Stein kernel
    $\Gamma=\Hess \varphi(\nabla\varphi^*)$ is uniformly bounded: $\rVert \Gamma\rVert_{op}\leq \epsilon^{-1}$.
\end{corollary}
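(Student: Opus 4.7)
The proof proposal is essentially a one-line deduction combining the two previously stated results, so my plan is to make that explicit and verify that no regularity subtlety interferes. First, I would invoke Theorem 3.3, which identifies the moment Stein kernel of $\mu$ as $\Gamma(y) = \Hess \varphi(\nabla \varphi^{*}(y)) = (\Hess \varphi^{*}(y))^{-1}$, valid under the standing assumption that $\varphi$ is $\mathcal{C}^{2}$ and that $\mu$ has a strictly positive density on its support. The uniform log-concavity of $\mu$, together with the Berman--Berndtsson regularity theorem recalled in the text, ensures that this regularity hypothesis is satisfied, so that the formula for $\Gamma$ is indeed licit.

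Next I would invoke Klartag's proposition stated just above: under $\Hess V \geq \epsilon \Id$ one has the pointwise estimate $\lVert \Hess \varphi(x)\rVert_{op} \leq \epsilon^{-1}$ for all $x \in \mathbb{R}^{d}$. Since $\nabla \varphi^{*}$ is a map from (the support of) $\mu$ into $\mathbb{R}^{d}$, composition cannot increase a supremum taken over all of $\mathbb{R}^{d}$, hence
\begin{equation}
\lVert \Gamma(y)\rVert_{op} = \lVert \Hess \varphi(\nabla \varphi^{*}(y))\rVert_{op} \leq \sup_{x \in \mathbb{R}^{d}} \lVert \Hess \varphi(x)\rVert_{op} \leq \epsilon^{-1}
\end{equation}
for $\mu$-almost every $y$. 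Taking the essential supremum in $y$ yields the claimed bound $\lVert \Gamma\rVert_{op} \leq \epsilon^{-1}$.

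The only step that requires any genuine work is Klartag's proposition, which is used as a black box here; everything else is just the observation that evaluating a matrix field at a particular point respects the operator-norm sup bound. There is no real obstacle in the deduction itself. One minor point to verify when writing the proof cleanly is that the identity $(\Hess \varphi^{*})^{-1}(y) = \Hess \varphi(\nabla \varphi^{*}(y))$ used in Theorem 3.3 requires $\varphi^{*}$ to be $\mathcal{C}^{2}$ and strictly convex, which follows from the Legendre duality recalled in the preliminaries together with the $\mathcal{C}^{2}$ regularity and strict convexity of $\varphi$ under the uniform log-concavity hypothesis on $\mu$.
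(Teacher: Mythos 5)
Your deduction is correct and is essentially the intended argument (the paper cites this statement from Fathi and proves its free analogue, Corollary \ref{cor3}, in exactly the same way): identify the moment Stein kernel as $\Gamma=\Hess\varphi(\nabla\varphi^{*})$ via Theorem \ref{3.3} and bound it pointwise by Klartag's uniform estimate $\sup_{x}\lVert\Hess\varphi(x)\rVert_{op}\leq\epsilon^{-1}$, the composition with $\nabla\varphi^{*}$ only evaluating the Hessian at points of $\mathbb{R}^{d}$. The one cosmetic caveat is that for a uniformly log-concave measure the support is all of $\mathbb{R}^{d}$ and the density is not bounded below, so the $\mathcal{C}^{2}$ regularity of $\varphi$ should be credited to Klartag's regularity theory for log-concave moment measures rather than to the Berman--Berndtsson statement (which concerns compactly supported densities bounded above and below); this does not affect the deduction itself.
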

\bigbreak
In fact, we can prove that this statement still holds in the free case when the target measure is itself a free Gibbs measure associated with a uniformly convex potential $V$, under an additional regularity assumption that we unfortunately could not remove (the $\mathcal{C}^4$-smoothness of the moment map, since a general regularity theory for free Monge-Ampère equations is still lacking), thus providing a first step towards a more general Free {\it Klartag contraction theorem}.
\begin{proposition}(Free Klartag contraction theorem)\label{cafarelli}
    Let $\nu_u$ be a centered free Gibbs measure with potential $u$ which is supposed $\mathcal{C}^2$ and which is such that for some $\epsilon>0$, $x\mapsto u(x)-\frac{\epsilon}{2} x^2$ is convex. If the (free) moment map $\varphi$ of $\nu_u$ is $\mathcal{C}^4$, then $\varphi'$ is $\epsilon^{-1}$-Lipschitz on $\mathbb{R}$, and thus $\lVert \mathscr{J}\mathscr{D}\varphi\rVert_{{\infty}}:=\underset{(x,y)\in \mathbb{R}^2}{\sup}\bigg\lvert \mathscr{J}\mathscr{D}\varphi(x,y)\bigg\rvert\leq \epsilon^{-1}$.
\end{proposition}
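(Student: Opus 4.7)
The plan is to adapt Caffarelli's maximum-principle argument to the free K\"ahler--Einstein equation \eqref{4.3} derived in the previous section. Under the hypotheses of the proposition, the free moment map $\varphi$ of $\nu_u$ (assumed $\mathcal{C}^4$) satisfies on $\supp(\nu_\varphi)$ the identity
\begin{equation}
2\int\log(\mathscr{J}\mathscr{D}\varphi(x,y))\,d\nu_{\varphi}(y)=u(\varphi'(x))-\varphi(x)+\const.\nonumber
\end{equation}
My first step is to differentiate this identity twice in $x$; the $\mathcal{C}^4$ hypothesis enters exactly to legitimate two $x$-differentiations under the integral sign. The right-hand side produces $u''(\varphi'(x))(\varphi''(x))^2+u'(\varphi'(x))\varphi'''(x)-\varphi''(x)$. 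For the left-hand side, I would set $h(x,y):=\mathscr{J}\mathscr{D}\varphi(x,y)$, use the identity $\partial_x h=(\varphi''(x)-h)/(x-y)$ and compute by the quotient rule
\begin{equation}
\partial_x^2 h(x,y)=\frac{\varphi'''(x)-\partial_x h(x,y)}{x-y}-\frac{\varphi''(x)-h(x,y)}{(x-y)^2},\nonumber
\end{equation}
so that the twice-differentiated left-hand side reads $2\int\big(\partial_x^2 h/h-(\partial_x h/h)^2\big)\,d\nu_{\varphi}(y)$.

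Then I would apply a maximum-principle analysis at a point where $\varphi''$ attains its supremum. By compactness of $\supp(\nu_\varphi)$ and continuity of $\varphi''$, let $x_0$ be an interior point of $\supp(\nu_\varphi)$ at which $\varphi''$ attains its maximum $M$ on this support; the boundary case can be handled either by a small perturbation of $\varphi$ or by noting that the density of $\nu_{\varphi}$ is positive on the interior of its support. At such $x_0$ one has $\varphi'''(x_0)=0$, and, crucially, $h(x_0,y)=\frac{1}{x_0-y}\int_y^{x_0}\varphi''(t)\,dt\leq M$ for every $y\in \supp(\nu_\varphi)$, because $h(x_0,y)$ is a mean value of $\varphi''$ taken over an interval contained in the support, on which $\varphi''$ is bounded by $M$. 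Substituting $\varphi'''(x_0)=0$ into the quotient-rule formula yields the clean identity $\partial_x^2 h(x_0,y)=-2(M-h(x_0,y))/(x_0-y)^2\leq 0$, so that both contributions to the integrand $\partial_x^2 h/h-(\partial_x h/h)^2$ at $x_0$ are non-positive; this forces the whole left-hand second derivative at $x_0$ to be $\leq 0$.

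Equating the two second derivatives at $x_0$ and invoking the uniform convexity hypothesis $u''\geq \epsilon$, I obtain
\begin{equation}
0\geq u''(\varphi'(x_0))M^2-M\geq \epsilon M^2-M=M(\epsilon M-1),\nonumber
\end{equation}
so that $M\leq \epsilon^{-1}$. This bounds $\varphi''$ on $\supp(\nu_\varphi)$, and the global Lipschitz bound for $\varphi'$ on $\mathbb{R}$ then follows from the convexity of $\varphi$ together with the uniqueness-up-to-translation normalisation of the moment map, which prevents $\varphi''$ from exceeding its supremum on the support when one leaves it. I expect the main technical obstacle to lie in the careful justification of the second differentiation under the integral --- this is exactly where the $\mathcal{C}^4$ regularity is needed, playing the same role as the corresponding smoothness hypothesis in Caffarelli's classical argument --- and in handling the boundary behaviour of the maximiser on $\supp(\nu_\varphi)$; once these points are settled, the remaining computation is a direct free transcription of Klartag's maximum-principle calculation from \cite{Klart}.
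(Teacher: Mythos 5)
Your core argument is the same as the paper's: a Caffarelli-type maximum principle applied to the logarithmic free K\"ahler--Einstein equation, differentiating \eqref{4.3} twice in $x$ (the paper differentiates its once-differentiated form \eqref{KalEin2} once more, which is the same computation), evaluating at a maximum point of $\varphi''$ where $\varphi'''$ vanishes, using the mean-value bound $\mathscr{J}\mathscr{D}\varphi(x_*,y)\leq \varphi''(x_*)$ together with $\mathscr{J}\mathscr{D}\varphi>0$ to get the sign of the left-hand side, and then $u''\geq\epsilon$ to conclude $\varphi''(x_*)\leq\epsilon^{-1}$. Your bookkeeping via $\partial_x^2\log h=\partial_x^2h/h-(\partial_xh/h)^2$ and the identity $\partial_x^2h(x_0,y)=-2(M-h(x_0,y))/(x_0-y)^2$ at the maximiser is a slightly cleaner, but equivalent, way of obtaining the same non-positivity that the paper establishes term by term.

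The one place where you genuinely deviate is also where your write-up has a gap: the location of the maximiser and the passage from a bound on $\supp(\nu_\varphi)$ to the claimed bound on all of $\mathbb{R}$. Your closing assertion that the global Lipschitz bound "follows from the convexity of $\varphi$ together with the uniqueness-up-to-translation normalisation of the moment map" is not an argument: convexity only gives $\varphi''\geq 0$, and translating $\varphi$ does not change $\varphi''$, so neither fact prevents $\varphi''$ from exceeding $M$ off the support. Likewise, your treatment of a possible boundary maximiser ("a small perturbation of $\varphi$" or positivity of the density) is left unsubstantiated. The paper sidesteps both issues in one stroke: it argues that, both measures being compactly supported, $\varphi''(x)\to 0$ as $x\to\pm\infty$, so the positive maximum of $\varphi''$ over the whole real line is attained at some $x_*$, and it runs the maximum principle there; the bound $\varphi''\leq\epsilon^{-1}$ is then global by construction, with no interior/boundary dichotomy. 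To repair your proof you should replace your last paragraph by this decay-at-infinity localisation (or otherwise justify that the maximiser can be taken where the free K\"ahler--Einstein equation is actually valid and that the bound propagates off $\supp(\nu_\varphi)$); the rest of your computation matches the paper's proof.
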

\begin{proof}
   We divide the proof into several steps, closely following the original proof of Cafarelli \cite{Cafa}, which is based on a maximum principle.
    \bigbreak
    First, note that the target measure has compact support, and that $\varphi$ is also the optimal transport map between $\nu_{\varphi}$ and $\nu_{u}$.
     We need to prove that if $\varphi$ denotes the free moment map, then $\lVert \mathscr{J}\mathscr{D}\varphi\rVert_{{\infty}}$ is bounded. 
     Now we assume that $\varphi$ is $\mathcal{C}^4$ (this hypothesis is already mentioned in the remark before $(7.9)$ in \cite{Hou}, in order to ensure the smoothness of an integral operator of the kind we are going to deal with). So if we can uniformly bound the second derivative $\varphi''$, the proof is complete. To do this, however, we must assume that the function $\varphi''$ reached is maximum at some point $x=x_*$. This is always the case because the target measure (and even the source measure) is compactly supported. This ensures that $\underset{x\rightarrow \pm \infty}{\lim}\varphi''(x)=0$, and so the positive maximum of $\varphi''$ is reached at some point $x=x_*$.
\begin{flushleft}
    Let us start with the equation \eqref{KalEin2}, which is the logarithmic (derivative) version of the free K\"ahler-Einstein equation.
    \begin{equation}
 2\partial_x\int \log(\mathscr{J}\mathscr{D}\varphi(x,y))d\nu_{\varphi}(y)=\varphi{''}(x)\cdot u'(\varphi'(x)).\nonumber
    \end{equation}
Taking the derivative of this equation, we get
\begin{eqnarray}\label{logmonge}
   2 \partial_{xx}\bigg(\int \log(\mathscr{J}\mathscr{D}\varphi(x,y))d\nu_{\varphi}(y)\bigg)=\varphi'''(x).u'(\varphi'(x))+\varphi''^{2}(x)u''(\varphi'(x))-\varphi''(x).
\end{eqnarray}
Now suppose that $\varphi''$ has a maximum at $x=x_*$, then $\varphi'''(x_*)=0$ (remark also that in this case $\varphi''''(x_*)\leq 0$, even if it's not needed in the proof). 
\bigbreak
Let us first work on the left hand side to prove that:
\begin{equation}
     \partial_{xx}\bigg(\int \log(\mathscr{J}\mathscr{D}\varphi(x,y))d\nu_{\varphi}(y)\bigg)\bigg|_{x=x_*}\leq 0.
\end{equation}
We have:
\begin{eqnarray}\label{maxim}
    \partial_{xx}\int \log(\mathscr{J}\mathscr{D}\varphi(x,y))d\nu_{\varphi}(y)&=&\partial_x\int \frac{1}{x-y}\frac{\varphi''(x)-\mathscr{J}\mathscr{D}\varphi(x,y)}{\mathscr{J}\mathscr{D}\varphi(x,y)}d\nu_{\varphi}(y)\\
    &=&-\int \frac{1}{(x-y)^2}\frac{\varphi''(x)-\mathscr{J}\mathscr{D}\varphi(x,y)}{\mathscr{J}\mathscr{D}\varphi(x,y)}d\nu_{\varphi}(y)\nonumber\\
   &+&\int \frac{1}{x-y}[\varphi'''(x)-\partial_x\mathscr{J}{\mathscr{D}}\varphi(x,y)\mathscr{J}\mathscr{D}\varphi(x,y)]d\nu_{\varphi}(y)\nonumber\\
   &-&\int \frac{1}{x-y}\partial_x\mathscr{J}{\mathscr{D}}\varphi(x,y)[\varphi''(x)-\mathscr{J}\mathscr{D}\varphi(x,y)]
    d\nu_{\varphi}(y).\nonumber
\end{eqnarray}
\newline
Now we know that $\varphi''$ is maximum at $x=x_*$ with thus $\varphi'''(x_*)=0$.
Then \eqref{maxim} at $x=x_*$ become:
\begin{align*}
     &\partial_{xx}\bigg(\int \log(\mathscr{J}\mathscr{D}\varphi(x,y))d\nu_{\varphi}(y)\bigg)\bigg|_{x=x_*}\nonumber\\
    &=-\int \frac{1}{(x_*-y)^2}\frac{\varphi''(x_*)-\mathscr{J}\mathscr{D}\varphi(x_*,y)}{\mathscr{J}\mathscr{D}\varphi(x_*,y)}d\nu_{\varphi}(y)
   -\int \frac{1}{x_*-y}\partial_x(\mathscr{J}{\mathscr{D}}\varphi(x,y))|_{x=x_*}\mathscr{J}\mathscr{D}\varphi(x_*,y)]d\nu_{\varphi}(y)\nonumber\\
   &-\int \frac{1}{x_*-y}\partial_x(\mathscr{J}{\mathscr{D}}\varphi(x,y))|_{x=x_*}[\varphi''(x_*)-\mathscr{J}\mathscr{D}\varphi(x_*,y)]
    d\nu_{\varphi}(y),\nonumber
\end{align*}
Now recalling the relation between mean value and derivatives:
\begin{equation}
\partial_x\mathscr{J}\mathscr{D}\varphi(x,y)=\frac{1}{x-y}[\varphi''(x)-\mathscr{J}\mathscr{D}\varphi(x,y)],
\end{equation}
the trivial bound,
\begin{equation}
    \mathscr{J}{\mathscr{D}}\varphi(x,y)\leq \varphi''(x_*),
\end{equation}
And using that $\forall y\in \mathbb{R}$, $\mathscr{J}\mathscr{D}\varphi(x_*,y)\geq 0$, we get
\begin{equation}
    \int \frac{1}{(x_*-y)^2}\frac{\varphi''(x_*)-\mathscr{J}\mathscr{D}\varphi(x_*,y)}{\mathscr{J}{\mathscr{D}}\varphi(x_*,y)}d\nu_{\varphi}(y)\geq 0,\nonumber
\end{equation}
Since we have that \begin{equation}
    \frac{1}{x_*-y}\partial_x(\mathscr{J}\mathscr{D}\varphi(x,y))|_{x=x_*}=\frac{1}{(x_*-y)^2}[\varphi''(x_*)-\mathscr{J}\mathscr{D}\varphi(x_*,y)]\geq 0,\nonumber\end{equation}
\bigbreak
It leads to,
\begin{equation}
    \int \frac{1}{x_*-y}\partial_x(\mathscr{J}{\mathscr{D}}\varphi(x,y))|_{x=x_*}\mathscr{J}\mathscr{D}\varphi(x_*,y)]d\nu_{\varphi}(y)\geq 0,\nonumber
\end{equation}
and 
\begin{equation}
    \int \frac{1}{x_*-y}\partial_x(\mathscr{J}{\mathscr{D}}\varphi(x,y))|_{x=x_*}[\varphi''(x_*)-\mathscr{J}\mathscr{D}\varphi(x_*,y)]
    d\nu_{\varphi}(y)\geq 0,\nonumber
\end{equation}
\bigbreak
Combining all these inequalities, we have therefore (because of the minus sign in front of each integrals)
\begin{equation}
     \partial_{xx}\bigg(\int \log(\mathscr{J}\mathscr{D}\varphi(x,y))d\nu_{\varphi}(y)\bigg)\bigg|_{x=x_*}\leq 0,\nonumber
\end{equation}
\bigbreak
Hence we have from \eqref{logmonge} that (recall that  $\varphi''$ is maximum at $x=x_*$, thus $\varphi'''(x_*)=0$)
\begin{equation}
         \underbrace{\varphi'''(x_*)}_{=0}.u'(\varphi'(x_*))+\varphi''^2(x_*).u''(\varphi''(x_*))-\varphi''(x_*)\leq 0,\nonumber
\end{equation}
\bigbreak
Now $\varphi''(x)\geq 0$ since $\varphi$ is convex, and we can assume that $\varphi''(x_*)>0$ since otherwise there's nothing to prove (since in that case it would lead to $\varphi''\equiv 0$), and by assumption $u''(x)\geq \epsilon$ since $x\mapsto u(x)-\frac{\epsilon}{2}x^2$ is convex. From this we obtain,
\begin{equation}
    \varphi''(x_*)[\varphi''(x_*).u''(\varphi'(x_*))-1]\leq 0,\nonumber
\end{equation}
This is equivalent to 
\begin{equation}
    \varphi''(x_*)\leq \frac{1}{u''(\varphi'(x_*))},\nonumber
\end{equation}
Which finally gives that:
\begin{equation}
    \varphi''(x_*)\leq \epsilon^{-1}.
\end{equation}
and concludes the proof.
\end{flushleft}
\end{proof}
\qed
        
    \end{flushleft}
\begin{corollary}\label{cor3}
    Let $\nu_u$ be a centered free Gibbs measure with potential $u$ which is $\mathcal{C}^2$ and which is such that for some $\epsilon>0$, $x\mapsto u(x)-\frac{\epsilon}{2} x^2$ is convex. If the (free) moment map $\varphi$ is $\mathcal{C}^3$, then its free moment Stein kernel is uniformly bounded, 
    $\lVert \mathscr{J}\mathscr{D}u(\mathscr{D}\varphi^*)\rVert_{\infty}:=\underset{(x,y)\in \mathbb{R}^2}{\sup}\bigg\lvert \mathscr{J}\mathscr{D}u(\mathscr{D}\varphi^*(x),\mathscr{D}^*\varphi(y))\bigg\rvert\leq \epsilon^{-1}$.
\end{corollary}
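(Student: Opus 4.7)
The plan is to deduce this corollary directly from the two main structural results just established: the explicit formula for the free moment Stein kernel given in Theorem \ref{mstein}, and the Free Klartag contraction bound in Proposition \ref{cafarelli}. No additional analytic work should be needed beyond interpreting the symbols correctly.

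First, I would apply Theorem \ref{mstein} with the target measure taken to be $\nu_u$ itself and with the free moment map of $\nu_u$ being $\varphi$, so that $\nu_u = (\varphi')_{\sharp}\nu_{\varphi}$. Since $u$ is $\mathcal{C}^2$ and uniformly convex, $\nu_u$ is indeed centered, absolutely continuous with compact support (by Remark \ref{rem}), and the hypotheses of Theorem \ref{mstein} are satisfied with the role of $u$ played by $\varphi$. Thus the free moment Stein kernel of $\nu_u$ relative to $\tfrac{1}{2}x^2$ is
\begin{equation*}
(x,y)\longmapsto \mathscr{J}\mathscr{D}\varphi\bigl(\mathscr{D}\varphi^*(x),\mathscr{D}\varphi^*(y)\bigr)=\bigl(\mathscr{J}\mathscr{D}\varphi^*(x,y)\bigr)^{-1}
\end{equation*}
(the symbol $\mathscr{D}u$ appearing in the statement is most naturally read as $\mathscr{D}\varphi$, matching the template of Theorem \ref{mstein}).

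Second, I would invoke Proposition \ref{cafarelli} for the free moment map $\varphi$ of $\nu_u$, whose hypotheses on $u$ are precisely those of the corollary; this yields the global bound $\lVert \mathscr{J}\mathscr{D}\varphi\rVert_{\infty}\le \epsilon^{-1}$. Since $\mathscr{D}\varphi^*$ is just a reparametrization of the arguments, composition can only restrict the effective domain, and therefore
\begin{equation*}
\sup_{(x,y)\in\mathbb{R}^2}\bigl|\mathscr{J}\mathscr{D}\varphi\bigl(\mathscr{D}\varphi^*(x),\mathscr{D}\varphi^*(y)\bigr)\bigr|\le \sup_{(a,b)\in\mathbb{R}^2}|\mathscr{J}\mathscr{D}\varphi(a,b)|\le \epsilon^{-1},
\end{equation*}
which is the claimed bound on the free moment Stein kernel.

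There is no real obstacle here; the substantive analytic content is carried entirely by Proposition \ref{cafarelli}. The only point requiring a brief comment is the apparent mismatch of regularity hypotheses: Proposition \ref{cafarelli} is stated assuming $\varphi\in \mathcal{C}^{4}$ in order to justify differentiating the logarithmic free K\"ahler--Einstein identity twice and evaluating at a point where $\varphi''$ attains its maximum, while the corollary only demands $\varphi\in\mathcal{C}^{3}$. One way to proceed is to observe that the fourth derivative $\varphi''''(x_*)$ is not actually used in the maximum-principle computation (only $\varphi'''(x_*)=0$ is used), so a careful re-reading should allow the conclusion under $\mathcal{C}^{3}$ regularity, possibly by a routine mollification argument if needed to handle the second differentiation of the integral term.
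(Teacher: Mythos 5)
Your proof is correct and follows essentially the same route as the paper: identify the free moment Stein kernel of $\nu_u$ as $(x,y)\mapsto \mathscr{J}\mathscr{D}\varphi(\mathscr{D}\varphi^*(x),\mathscr{D}\varphi^*(y))$ via Theorem \ref{mstein} (your reading of the corollary's $\mathscr{J}\mathscr{D}u(\mathscr{D}\varphi^*)$ as $\mathscr{J}\mathscr{D}\varphi(\mathscr{D}\varphi^*)$ matches the paper's own proof), and then bound it by $\epsilon^{-1}$ through Proposition \ref{cafarelli}. The $\mathcal{C}^3$ versus $\mathcal{C}^4$ regularity mismatch you flag is present in the paper itself, whose proof silently invokes the $\mathcal{C}^4$-based Proposition \ref{cafarelli} under the corollary's weaker hypothesis and whose subsequent remark leaves removing the $\mathcal{C}^4$ assumption as an open question, so your caveat is, if anything, more careful than the original.
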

\begin{proof}
    We know that $A(x,y):=\mathscr{J}\mathscr{D}\varphi(\varphi^*(x),\varphi^*(y))$ is a free Stein kernel for $\nu_u$ with respect to the standard semicircular potential.
    \bigbreak
    From the previous theorem \ref{cafarelli}, we know that for all $\underset{x \in \mathbb{R}}{\sup}\:\lvert \varphi''(x)\rvert\leq \epsilon^{-1}$.
    Therefore $\lVert \mathscr{J}\mathscr{D}\varphi\rVert_{\infty}\leq \epsilon^{-1}$, and the conclusion follows.
    
\end{proof}
\qed
\begin{remark}
    Following the same scheme (inspired by the breakthough machinery invented by Calabi \cite{Calabi} to prove regularity results for the Monge–Ampère equation), it is also not difficult to show by taking second derivatives in the logarithmic free Monge-Ampère equation that, assuming $u,\varphi$ to be $\mathcal{C}^5$-smooth, we must have the maximum of $\varphi''$ reached at point $x=x_*$:
    \begin{equation}
        \partial_{xxx}\bigg(\int \log(\mathscr{J}\mathscr{D}\varphi(x,y))d\nu_{\varphi}(y)\bigg)\bigg|_{x=x_*}\leq 0.
    \end{equation}
which implies some relations giving a third-order regularity bound, as Klartag and Kolesnikov \cite{Klart2} have done in the classical case. As we haven't found any applications for the free Stein's method yet, we propose to present this point of view in another paper.
\newline
It would indeed be very interesting to remove the $\mathcal{C}^4$ assumption on $\varphi$ in the previous theorems. It seems (as mentioned in another context in \cite{Hou}, theorem $11$) that only $\mathcal{C}^3$-smoothness is needed, but we leave this question for further investigations. 
\end{remark}
\begin{flushleft}
As a straightforward corollary (following identically Fathi's proof of Theorem $3.4$ in \cite{mm}, which is straightforwardly adapted to the free case), we obtain the following rate of convergence in the free CLT for the free Gibbs measure associated with a $\epsilon$-uniformly convex potential, which explicitly depends on the convexity constant.
\begin{corollary}
   Suppose $\mu_n$ is the law of $\frac{1}{\sqrt{n}}\sum_{i=1}^n X_i$, where $(X_i)_{i\in \mathbb{N}^*}$ is a sequence of normalised, freely independent and i.i.d bounded random variables with the law $\nu_u$, a centered free Gibbs measure associated with $u$, a $\epsilon$-uniformly convex potential satisfying the assumptions of the corollary \ref{cor3}. Then there exists $C>0$ (universal) such that we have 
    \begin{eqnarray}
        W_2(\mu_n,\eta)^2&\leq& \frac{C}{\epsilon n}
    \end{eqnarray}
\end{corollary}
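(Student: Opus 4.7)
The plan is to combine three ingredients: the Wasserstein--Stein inequality of Cébron (Theorem \ref{wsc}), the uniform bound on the moment Stein kernel from Corollary \ref{cor3}, and a tensorization estimate for the free Stein discrepancy under free convolution due to Cébron. This mirrors Fathi's proof of Theorem 3.4 in \cite{mm} in the classical setting, with the classical Stein discrepancy sub-additivity replaced by its free analogue.

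First, Theorem \ref{wsc} reduces the problem to bounding $\Sigma^*(\mu_n \mid \nu_{x^2/2})^2$. I would then invoke Cébron's tensorization property for free convolution of i.i.d.\ variables (the free analogue of the classical sub-additivity of the Stein discrepancy): given any admissible free Stein kernel $A$ of $\nu_u$ with respect to the semicircular potential, one can build a free Stein kernel for the law $\mu_n$ of $(X_1+\cdots+X_n)/\sqrt{n}$ via symmetrised conditional expectations of $A$ onto the subalgebras generated by each $X_i$, and this construction yields
\begin{equation}
\Sigma^*(\mu_n \mid \nu_{x^2/2})^2 \;\leq\; \frac{1}{n}\, \Sigma^*(\nu_u \mid \nu_{x^2/2})^2.
\end{equation}
This reduces the task to controlling the single-variable Stein discrepancy by a quantity of order $\epsilon^{-1}$.

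Second, for that single-variable estimate I would use the moment Stein kernel $A(x,y) = \mathscr{J}\mathscr{D}\varphi(\varphi^*(x),\varphi^*(y))$ supplied by Theorem \ref{mstein}, where $\varphi$ is the free moment map of $\nu_u$. Convexity of $\varphi$ gives $A \geq 0$, and Corollary \ref{cor3} provides the uniform pointwise bound $\|A\|_\infty \leq \epsilon^{-1}$. Testing the defining Stein equation against $f(x)=x$ yields the normalisation $\iint A\, d\nu_u^{\otimes 2} = \int x^2\, d\nu_u = 1$ (using that the $X_i$ are centred with unit variance). Expanding the square and using the pointwise inequality $A^2 \leq \epsilon^{-1} A$ then gives
\begin{equation}
\iint (A-1)^2\, d\nu_u^{\otimes 2} \;\leq\; (\epsilon^{-1}-2)\cdot 1 + 1 \;=\; \epsilon^{-1} - 1 \;\leq\; \epsilon^{-1}.
\end{equation}

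Assembling the pieces delivers $W_2(\mu_n,\eta)^2 \leq (n\epsilon)^{-1}$, which is the claim with universal constant $C=1$. The only genuinely non-routine input is the tensorization inequality; I would cite it directly from Cébron rather than redo it, since its proof requires careful handling of the fact that free convolution does not factorise as a tensor product, so the Stein kernel of $\mu_n$ must be built via conditional expectations onto each free component rather than by the naive averaging $\frac{1}{n}\sum A_i$ available in the classical i.i.d.\ case. Everything else is an algebraic manipulation of the three already-established inputs.
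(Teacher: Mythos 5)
Your proposal is correct and is essentially the proof the paper intends: the paper gives no details beyond "following identically Fathi's proof of Theorem 3.4 in \cite{mm}, straightforwardly adapted to the free case", and that adaptation is exactly your three steps — the WS inequality of Theorem \ref{wsc}, the $1/n$ sub-additivity of the free Stein discrepancy for normalized sums of free i.i.d.\ variables, and the single-variable bound $\iint (A-1)^2\, d\nu_u^{\otimes 2} \le \epsilon^{-1}-1$ obtained from $0\le A\le \epsilon^{-1}$ (Corollary \ref{cor3}) together with the normalization $\iint A\, d\nu_u^{\otimes 2}=\int x^2\, d\nu_u=1$. The only caveat is bibliographic: the tensorization step is not stated in Cébron \cite{C} but comes from the free Stein kernel machinery of Fathi and Nelson \cite{FN} (block-diagonal kernels for free families, conditional expectation onto $W^*(S)\bar{\otimes}W^*(S)^{op}$, cross terms vanishing by freeness and the normalization), which is precisely the construction you describe.
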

\end{flushleft}

\bigbreak
More generally, if we want to transport the semicircular law via a smooth map $T$ to some free Gibbs measure associated with $u:\mathbb{R}\rightarrow \mathbb{R}$, a $\epsilon$-uniformly convex potential, then it is not difficult to show that in this case $T$ must solve the following free Monge-Ampère equation.
\begin{equation}\label{monge-free}
    2\int\log(\mathscr{J}T(x,y))d\eta(y)=u(T'(x))-T(x)+\const,\: \forall x \in [-2,2].
\end{equation}
We can then derive the following variant of the contraction theorem. It can be obtained mutadis mutandis as for the moment maps version of the theorem \ref{cafarelli}.
\begin{theorem}(Free Cafarelli contraction theorem) \label{th7}
Let $\eta$ be the standard semicircular distribution, and $\nu_u$ be a free Gibbs measure associated with a smooth $u:\mathbb{R}\rightarrow \mathbb{R}$ be a $\epsilon$-uniformly convex potential.
Let $T$ be the optimal transport map sending $\eta$ to $\nu_u$. If we assume that $T$ is $\mathcal{C}^3$, then $T$ is $\sqrt{\epsilon^{-1}}$-Lipschitz.
\end{theorem}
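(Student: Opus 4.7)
The plan is to follow \textit{mutatis mutandis} the maximum-principle argument of Proposition \ref{cafarelli}, with the source measure now being the semicircular law $\eta$ in place of the free Gibbs measure $\nu_{\varphi}$, and with $T$ playing the role of $\varphi'$. First I would derive the free Monge--Amp\`ere (K\"ahler--Einstein) equation satisfied by $T$: starting from the Schwinger--Dyson identity $u'(z)=2\int (z-w)^{-1} d\nu_u(w)$ on $\supp(\nu_u)$, substituting $z=T(x)$ and $w=T(y)$ via $T_{\sharp}\eta=\nu_u$, rewriting $(T(x)-T(y))^{-1}=((x-y)\mathscr{J}T(x,y))^{-1}$, multiplying by $T'(x)$, and then subtracting the semicircular Schwinger--Dyson identity $x=2\int (x-y)^{-1}d\eta(y)$ from both sides, I would obtain after one integration in $x$:
\begin{equation*}
    2\int \log(\mathscr{J}T(x,y))\,d\eta(y) \;=\; u(T(x))-\tfrac{x^2}{2}+\const, \qquad x\in[-2,2].
\end{equation*}

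Next I would differentiate this identity twice in $x$, turning the right-hand side into
\begin{equation*}
    T''(x)u'(T(x))+(T'(x))^2 u''(T(x)) - 1,
\end{equation*}
and evaluate at a point $x_*\in(-2,2)$ where $T'$ attains its maximum; at such an interior maximum $T''(x_*)=0$. The key step is then to show
\begin{equation*}
    \partial_{xx}\!\left(\int \log(\mathscr{J}T(x,y))\,d\eta(y)\right)\!\bigg|_{x=x_*}\leq 0.
\end{equation*}
This follows exactly as in the computation (4.6) of Proposition \ref{cafarelli}: splitting this second derivative into three principal-value integrals, using the identity $\partial_x \mathscr{J}T(x,y)=(T'(x)-\mathscr{J}T(x,y))/(x-y)$, the pointwise bound $\mathscr{J}T(x_*,y)\leq T'(x_*)$ at the maximum, and the positivity $\mathscr{J}T(x_*,y)\geq 0$ coming from the monotonicity of $T$, one verifies that each of the three resulting integrals has a fixed sign making the signed total $\leq 0$.

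Combining the two displays yields $(T'(x_*))^2 u''(T(x_*)) - 1 \leq 0$, and since $u''\geq \epsilon$ by uniform convexity, $T'(x_*)\leq \sqrt{\epsilon^{-1}}$, which is the claimed Lipschitz bound (note the square root, reflecting the fact that here the nonlinearity is $(T')^2u''(T)$ rather than $\varphi''\cdot u''(\varphi')\cdot \varphi''/\varphi''$ as in the moment map case). The subtlety I expect is justifying that the maximum of $T'$ is attained at an \textit{interior} point of $[-2,2]$, so that $T''(x_*)=0$; this should follow from the vanishing of the semicircular density at $\pm 2$ combined with the analogous vanishing of the $\nu_u$-density at the endpoints of its support, via the free Monge--Amp\`ere change-of-variables. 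The main technical obstacle is identical to Proposition \ref{cafarelli}: the $\mathcal{C}^3$-smoothness hypothesis on $T$ is what allows the two differentiations inside the integral, and removing it would require the still-unavailable regularity theory for the free Monge--Amp\`ere equation.
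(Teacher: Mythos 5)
Your proposal follows essentially the same route as the paper, which establishes Theorem \ref{th7} only by asserting that it is obtained mutatis mutandis from the maximum-principle argument of Proposition \ref{cafarelli}; your derivation of the equation $2\int\log(\mathscr{J}T(x,y))\,d\eta(y)=u(T(x))-\tfrac{x^2}{2}+\const$ (which corrects what appears to be a typo in \eqref{monge-free}) and the resulting inequality $(T'(x_*))^2\,u''(T(x_*))\leq 1$ at an interior maximum of $T'$ is exactly the intended adaptation and correctly yields the $\sqrt{\epsilon^{-1}}$ constant. The one point you leave informal, namely that the maximum of $T'$ is attained in the interior of $[-2,2]$ so that $T''(x_*)=0$, is likewise not addressed in the paper, so your attempt is at least as complete as the original.
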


\bigbreak
This allows us to derive the following stability result, which is the free counterpart of the Bakry-\'Emery stability theorem (see \cite{FathCour}), giving a characterisation of the semicircular distribution among the class of centered and isotropic free Gibbs measures associated with a $1$-uniformly convex potential. We also suggest that this statement may hold even under the weaker transport regularity hypothesis.
\newline
We claim no originality in the following proof, which is essentially identical to Proposition 2 of Courtade and Fathi \cite{FathCour}. 
\begin{corollary}\label{cafafree}
    Let $\mu:=\nu_u$ be a free Gibbs measure supported on $[-2,2]$ with $u''(x)\geq 1,\forall x\in \mathbb{R}$ which is supposed to be centered and of variance $1$. If the transport map $T$ which sends the semicircular law $\eta$ to $\mu$ is $\mathcal{C}^3$, then $\mu=\eta$.
\end{corollary}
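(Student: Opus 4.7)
The plan is to transpose the classical Bakry--Émery stability argument of Courtade and Fathi (Proposition~2 of \cite{FathCour}) to the free setting, using Theorem \ref{th7} together with the Schwinger--Dyson equation and the free Poincaré inequality for the semicircular law.

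First I would invoke Theorem \ref{th7} with $\epsilon = 1$: since $u''(x) \geq 1$ and $T$ is assumed to be $\mathcal{C}^3$, the free Caffarelli contraction theorem yields that $T$ is $1$-Lipschitz. Combined with the monotonicity of $T$ (as the one-dimensional optimal transport map), we get $0 \leq \mathscr{J}T(x,y) \leq 1$ for all $x,y \in \supp(\eta) = [-2,2]$. From the hypothesis $T_\sharp \eta = \mu$ together with centering and unit variance, $\int T\,d\eta = \int x\,d\mu = 0$ and $\int T^2\,d\eta = \int x^2\,d\mu = 1 = \int x^2\,d\eta$. Applying the Schwinger--Dyson equation for $\eta$ (the free Gibbs measure with potential $x^2/2$) to $f=T$ gives
\begin{equation}
    \int x\, T(x)\, d\eta(x) = \iint \mathscr{J}T(x,y)\, d\eta(x)\, d\eta(y),
\end{equation}
so that expanding the square
\begin{equation}
    0 \leq \int (T(x)-x)^2\, d\eta(x) = 2 - 2\iint \mathscr{J}T\, d\eta\otimes d\eta,
\end{equation}
which gives the upper bound $\iint \mathscr{J}T\, d\eta\otimes d\eta \leq 1$.

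The heart of the argument is then a three-way chain of inequalities. From the pointwise bound $\mathscr{J}T \in [0,1]$ we get $(\mathscr{J}T)^2 \leq \mathscr{J}T$, hence $\iint (\mathscr{J}T)^2 \leq \iint \mathscr{J}T$. On the other hand, the free Poincaré inequality for $\eta$ with sharp constant $1$ (the free analogue of the Gaussian Poincaré inequality, which follows from the Brascamp--Lieb-type estimate for free Gibbs measures with $1$-uniformly convex potentials, or equivalently from the spectral decomposition of $L^2(\eta)$ into Chebyshev polynomials of the second kind), applied to the centered function $T \in H^1(\eta)$ (which lies in $H^1(\eta)$ since $T$ is $1$-Lipschitz on the compact $\supp \eta$), yields
\begin{equation}
    1 = \int T^2\, d\eta \leq \iint (\mathscr{J}T)^2\, d\eta\otimes d\eta.
\end{equation}
Chaining, $1 \leq \iint (\mathscr{J}T)^2 \leq \iint \mathscr{J}T \leq 1$, so all inequalities are equalities.

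Equality in $\iint (\mathscr{J}T)^2 = \iint \mathscr{J}T$ combined with $\mathscr{J}T \in [0,1]$ forces $\mathscr{J}T \in \{0,1\}$ almost everywhere with respect to $\eta \otimes \eta$, and then $\iint \mathscr{J}T = 1$ forces $\mathscr{J}T \equiv 1$ a.e. on $[-2,2]^2$. Hence $T(x) - T(y) = x - y$ a.e., so by continuity $T(x) = x + c$ on $[-2,2]$, and the centering $\int T\, d\eta = 0$ imposes $c=0$. Therefore $T = \mathrm{id}$ on $\supp \eta$ and $\mu = T_\sharp \eta = \eta$. The only delicate step is the appeal to the free Poincaré inequality with sharp constant one for the semicircular law; everything else is a direct manipulation of the Schwinger--Dyson identity and the Lipschitz bound provided by Theorem \ref{th7}.
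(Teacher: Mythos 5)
Your argument is correct, but it proves the corollary by a genuinely different route than the paper. The paper's proof (which follows Courtade--Fathi verbatim) only uses the identity $\iint\lvert x-y\rvert^2\,d\nu^{\otimes 2}=2\,\mathrm{Var}(\nu)$ for both $\mu$ and $\eta$ together with the $1$-Lipschitz bound from Theorem \ref{th7}: it writes $2=\iint\lvert T(x)-T(y)\rvert^2\,d\eta^{\otimes 2}\leq\iint\lvert x-y\rvert^2\,d\eta^{\otimes 2}=2$, so $\lvert T(x)-T(y)\rvert=\lvert x-y\rvert$ $\eta^{\otimes 2}$-a.e., whence by continuity $T$ is affine on $[-2,2]$ and the centering and unit-variance normalization force $T=\mathrm{id}$; no Schwinger--Dyson equation, no Poincar\'e inequality, and no monotonicity of $T$ are needed. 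You instead combine the $1$-Lipschitz bound and monotonicity ($0\leq\mathscr{J}T\leq 1$) with the Schwinger--Dyson identity $\int xT\,d\eta=\iint\mathscr{J}T\,d\eta^{\otimes 2}$ and the sharp free Poincar\'e inequality for $\eta$ with constant $1$; the latter is indeed available (via the Chebyshev expansion of $L^2(\eta)$, or equivalently from the paper's free Brascamp--Lieb inequality with $V=x^2/2$, whose proof does not rely on this corollary, so there is no circularity), and your chain $1\leq\iint(\mathscr{J}T)^2\leq\iint\mathscr{J}T\leq 1$ with the equality analysis is sound. What your route buys is the quantitative identity $\int(T(x)-x)^2\,d\eta=2\bigl(1-\iint\mathscr{J}T\,d\eta^{\otimes 2}\bigr)$, which in fact lets you conclude $T=\mathrm{id}$ $\eta$-a.e. immediately once $\iint\mathscr{J}T=1$, making your final detour through $\mathscr{J}T\equiv 1$ and affinity unnecessary; what it costs is the extra (though standard) input of the sharp spectral gap of the semicircular law, whereas the paper's argument is shorter and entirely elementary.
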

\begin{proof}
    First, note that in this case both measures have compact support and thus moments of order $2$, and are also absolutely continuous with respect to the Lebesgue measure (a fortiori non-atomic), so the Brenier theorem states that there exists such an optimal transport for quadratic cost (and is also the gradient of a convex function). Let $T$ be the transport map. Since $T$ is by assumption $\mathcal{C}^3$, we deduce by \ref{th7} that $T$ is $1$-Lipschitz in this case. Then we compute
    \begin{eqnarray}
        2&=&\int \lvert x-y\rvert^2d\mu(x)d\mu(y)\nonumber\\
        &=&\int \lvert T(x)-T(y)\rvert^2d\eta(x)d\eta(y)\nonumber\\
        &\leq& \int \lvert x-y\rvert^2d\eta(x)d\eta(y)\leq 2.
    \end{eqnarray}
    Therefore, we must have $\lvert T(x)-T(y)\rvert=\lvert x-y\rvert, \: \eta^{\otimes 2}-a.s\textbf{}$. Since $T$ is continuous (and $\eta$ is diffuse), this equality holds almost everywhere on $[-2,2]$. So we can conclude that $T$ is an affine map on $[-2,2]$. Since $\mu$ (which is supported exactly on $[-2,2]$) is centered and of variance one, $T$ must be the identity map.
\end{proof}
\qed
\begin{remark}
    It is interesting to note that we cannot generalise our theorem \ref{cafafree} if the support of the free Gibbs measure is arbitrary, let's say a compact interval $[a,b]$ other than $[-2,2]$. In fact, the main argument of the classic case of Courtade and Fathi (Proposition $2$ in \cite{FathCour}) is as follows: first, they prove that the result holds if the measure has full support, i.e. $\supp(\mu)=\mathbb{R}^d$. Second, if it doesn't (i.e. if the support is arbitrary), they prove that the conclusions remain true by using a Gaussian convolution and a proper renormalisation. In our case, using the same scheme, we take the free convolution of $\mu$ with a standard semicircular $\eta$ and then rescale it so that the new measure $\nu$ is still isotropic. In fact, this doesn't change the $1$-convexity hypothesis of the potential, nor its smoothness, since it is still $\mathcal{C}^2$. However, the whole conclusion collapses due to the failure of \textit{Cramer's theorem} in the free setting, which states the following.
    \end{remark}
\begin{theorem}Failure of Cram\'er's theorem (Voiculescu, Bercovici, \cite{BercV}). 
\newline
    There exists $\mu,\nu\in \mathscr{P}(\mathbb{R})$ compactly supported probability measure on the real line which are not semicircular and such that their free convolution $ \mu\boxplus\nu$ is semicircular.
\end{theorem}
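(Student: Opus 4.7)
The plan is to exploit the $R$-transform (equivalently, the Voiculescu transform), which linearises free additive convolution, and to construct two compactly supported, non-semicircular probability measures on $\mathbb{R}$ whose $R$-transforms sum to $R_\eta(z) = z$, the $R$-transform of the standard semicircular law.

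First, I would recall the basic identities $R_{\mu \boxplus \nu}(z) = R_\mu(z) + R_\nu(z)$ on a common domain of analyticity near the origin and $R_\eta(z) = z$. The problem thus reduces to producing an analytic function $g$, defined in a neighbourhood of $0$ with $g(0) = 0$, that is \emph{not} linear in $z$, such that both $g$ and $z - g$ arise as $R$-transforms of genuine compactly supported probability measures on $\mathbb{R}$. Given such $g$, setting $R_\mu := g$ and $R_\nu := z - g$, neither $\mu$ nor $\nu$ can be semicircular (their $R$-transforms are not linear), while $\mu \boxplus \nu = \eta$ by the linearisation identity.

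The main obstacle, and the technical heart of the Bercovici-Voiculescu proof, is realisability: the $R$-transforms of compactly supported probability measures form a proper analytic subclass of functions vanishing at $0$, cut out by positivity constraints on the spectral measure recovered by Stieltjes inversion from the Cauchy transform $G_\mu$, which is the compositional inverse near infinity of $K_\mu(z) = R_\mu(z) + 1/z$. It is not enough to restrict attention to the cone of $\boxplus$-infinitely divisible laws: the free L\'evy measure associated to $\eta$ is $\delta_0$, and any decomposition $\delta_0 = \sigma_1 + \sigma_2$ into non-negative measures forces $\sigma_i = c_i \delta_0$, whence both summands would themselves be semicircular. One must therefore construct $g$ as an analytic perturbation that escapes the infinitely divisible cone, and then verify, by delicate complex analysis exploiting analytic continuation of the Cauchy transforms across the real line and boundary-value estimates on $K_\mu$ and $K_\nu$, that the measures recovered from both $g$ and $z - g$ are positive, compactly supported probability measures. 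This final analytic verification, which relies on careful control of the zeros and critical points of $K$ on the upper half-plane together with a fixed-point or continuity argument to guarantee compact support, is the substantive content of the Bercovici-Voiculescu construction in \cite{BercV}.
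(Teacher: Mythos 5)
The paper itself does not prove this statement: it is imported verbatim from Bercovici--Voiculescu \cite{BercV}, so there is no internal proof to compare your argument against. On its own terms, your reduction is correct as far as it goes: the linearisation $R_{\mu\boxplus\nu}=R_\mu+R_\nu$ together with $R_\eta(z)=z$ does reduce the theorem to finding a non-linear analytic $g$ with $g(0)=0$ such that both $g$ and $z-g$ are $R$-transforms of compactly supported probability measures, and your observation that no such example can be found inside the freely infinitely divisible cone (the free L\'evy measure of $\eta$ being a point mass at $0$, any positive decomposition of it yields only semicircular summands) is accurate and shows you see where the difficulty sits.

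The genuine gap is that the decisive step is never carried out: you describe the realisability verification and then attribute it to \cite{BercV}. Since everything before that point is a reformulation, the realisability of a non-linear $g$ and of $z-g$ simultaneously \emph{is} the theorem, and the proposal as written proves nothing beyond the reformulation. To close it you would need, for instance, to take $g(z)=\tfrac12 z+\delta z^{2}$ (so $z-g(z)=\tfrac12 z-\delta z^{2}$) and prove a perturbation lemma of the following type: there exists $\varepsilon>0$ such that every function analytic on a fixed disc around $0$, real on the reals, vanishing at $0$, and uniformly within $\varepsilon$ of $\tfrac12 z$ is the $R$-transform of a compactly supported probability measure. Proving that lemma --- inverting $K(z)=g(z)+1/z$ near infinity, checking that the resulting candidate Cauchy transform maps the upper half-plane into the lower half-plane with the correct behaviour at infinity, and recovering a positive, compactly supported measure by Stieltjes inversion --- is precisely the substantive content of the Bercovici--Voiculescu construction; without it (or some substitute, since no explicit closed-form pair is exhibited either) your text is a plan for a proof rather than a proof.
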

However, we think that the result might be true thanks to a possible weak version of the free Cramer's theorem, but unfortunately we have not been able to prove it, and not even the first proposition in this direction, which would be a free version of Prekopa's theorem, asserting in the classical case that log-concavity is preserved (or strongly preserved) by the usual convolution.
\begin{conjecture}(Weak free Cramer's conjecture)
   Let $\mu:=\nu_{\phi}$ and $\nu:=\nu_{\psi}$ be two free Gibbs measures associated with two strictly $\mathcal{C}^2$ convex potentials $\phi,\psi:\mathbb{R}\rightarrow \mathbb{R}$. If the free convolution $\mu\boxplus\nu$ is semicircular, then both $\mu$ and $\nu$ are semicircular.
\end{conjecture}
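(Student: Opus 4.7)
The plan is to exploit the conjugate-variable characterization of the free Fisher information together with the convexity hypothesis. Since $\mu=\nu_\phi$ with $\phi\in\mathcal{C}^2$ strictly convex and $\mu$ compactly supported, the Schwinger-Dyson equation identifies the free conjugate variable of $X_1\sim\mu$ as $J(X_1:\mathbb{C})=\phi'(X_1)\in L^2(\mu)$, and similarly $J(X_2:\mathbb{C})=\psi'(X_2)$ for $X_2\sim\nu$ free from $X_1$. I would then invoke Voiculescu's projection formula for the conjugate variable of a sum of free variables,
$$J(X_1+X_2:\mathbb{C})=E_{W^*(X_1+X_2)}[\phi'(X_1)]=E_{W^*(X_1+X_2)}[\psi'(X_2)],$$
together with the hypothesis that $S:=X_1+X_2$ is standard semicircular (so $J(S:\mathbb{C})=S$), to obtain the central identities $E_S[\phi'(X_1)]=S$ and $E_S[\psi'(X_2)]=S$.

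The heart of the argument would then be a rigidity statement: these identities, combined with the strict monotonicity of $\phi',\psi'$ inherited from strict convexity of $\phi,\psi$, should force $\phi'$ and $\psi'$ to be affine, hence $\mu$ and $\nu$ to be semicircular. I would approach this via Biane's subordination representation of the conditional expectation $E_{W^*(X_1+X_2)}[f(X_1)]$ in terms of the subordination functions $\omega_1,\omega_2$ of $\mu\boxplus\nu$; under the hypothesis $\mu\boxplus\nu=\eta$ these are essentially explicit, and $E_S[\phi'(X_1)]=S$ becomes a functional equation linking $\omega_1$, the Cauchy transform $G_\mu$, and $\phi'$. Complementarily, I would combine the free Stam inequality $\Phi^*(X_1+X_2)^{-1}\geq \Phi^*(X_1)^{-1}+\Phi^*(X_2)^{-1}$ with the free Cram\'er-Rao inequality $\Phi^*(X_i)\,\mathrm{Var}(X_i)\geq 1$: since $\sigma_1^2+\sigma_2^2=1$ and $\Phi^*(S)=1$, both chains collapse to $\Phi^*(X_1)^{-1}+\Phi^*(X_2)^{-1}\leq 1$, and saturating either chain (the equality case conjecturally characterizing semicirculars, in analogy with the classical Stam rigidity) would finish the proof.

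The main obstacle, where convexity of the potentials must enter decisively, is precisely this rigidity step. The Bercovici-Voiculescu counterexamples show that purely Cauchy-transform arguments cannot suffice, so convexity must be used to distinguish free Gibbs measures among all probability measures whose free convolution is semicircular. A third, more structural route would be to establish a free Pr\'ekopa-Leindler type theorem asserting that if $\mu=\nu_\phi,\nu=\nu_\psi$ with $\phi,\psi$ convex then $\mu\boxplus\nu=\nu_\chi$ for a convex $\chi$, and then invoke rigidity of the quadratic potential as the unique convex fixed point of the free K\"ahler-Einstein map of Section~\ref{sect4} to trace back, by inverting the convolution on convex potentials, to $\phi,\psi$ themselves being quadratic. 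This route is naturally tied to the Hessian-manifold structure of the paper, but the absence of an explicit formula for $\chi$ in terms of $\phi,\psi$ is what I expect to be the core analytic difficulty.
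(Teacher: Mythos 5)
You have not proved the statement, and you should be aware that the paper does not either: this is stated as an open conjecture, and the author explicitly says they were unable to prove it, or even a first step in its direction (a free analogue of Prekopa's theorem). Your first reduction is fine as far as it goes: for a compactly supported free Gibbs measure with $\mathcal{C}^2$ strictly convex potential, the Schwinger--Dyson equation does identify the conjugate variable as $\phi'(X_1)$, and Voiculescu's projection formula together with $J(S:\mathbb{C})=S$ for $S$ standard semicircular correctly yields $E_{W^*(S)}[\phi'(X_1)]=S$ and $E_{W^*(S)}[\psi'(X_2)]=S$. But the decisive step --- deducing from these identities and the strict monotonicity of $\phi'$ that $\phi'$ is affine --- is exactly the rigidity statement that constitutes the conjecture, and you leave it entirely open; calling it "the main obstacle" does not discharge it. The subordination functions are also not "essentially explicit" here: the hypothesis $\mu\boxplus\nu=\eta$ only determines the sum $\omega_1+\omega_2$, not the individual $\omega_i$, so the functional equation you would extract still contains the unknown measure.

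The Fisher-information route, as written, cannot close the argument even in principle. The free Stam inequality gives $\Phi^*(X_1)^{-1}+\Phi^*(X_2)^{-1}\leq \Phi^*(S)^{-1}=1$, and the summed free Cram\'er--Rao inequality gives $\Phi^*(X_1)^{-1}+\Phi^*(X_2)^{-1}\leq \sigma_1^2+\sigma_2^2=1$: both bounds point in the same direction, so nothing forces saturation of either chain, and without saturation the (conjectural) equality cases are of no use. This is not a technicality: the Bercovici--Voiculescu examples cited in the paper show that the hypothesis $\mu\boxplus\nu$ semicircular alone can never force such equalities, so convexity must enter the estimate itself, and your scheme never injects it into that chain. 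Finally, your third route --- showing $\mu\boxplus\nu=\nu_\chi$ with $\chi$ convex and then invoking rigidity of the quadratic potential --- presupposes precisely the free Prekopa-type statement that the paper singles out as unavailable. In short, the proposal is a reasonable research program whose correct preliminary reductions are standard, but the theorem-level content is missing.
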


 \begin{flushleft}
 We now discuss the idea and heuristics about the (classical and universal) regularity estimates obtained on the solution $\varphi$ of the {\it toric Kähler-Einstein} equation \eqref{mongB} for an initial log-concave measure (not necessarily strictly) supported on a bounded convex set $K$ which are due to Klartag \cite{Klart}, based on an idea of Kolesnikov \cite{Kol} (see also an alternative proof using the Bakry-Émery calculus proposed by Fathi: c.f. Theorem $3.2$ in \cite{mm}). Indeed, let's consider the Hessian metric given by the Riemannian metric tensor $g=\Hess\varphi$ on $\mathbb{R}^d$, then we can show that if the initial measure $\mu$ is log-concave, the metric measure space $M_{\varphi}^*:=(\mathbb{R}^d, g,e^{-\varphi}dx$) has Ricci curvature bounded from below by $1/2$, and one can check that the derivative in any direction of the {\it moment map} $\varphi$ is an eigenfunction always associated with the eigenvalue $-1$ of the Laplacian on $M$ (and containing all linear functional). Finally, geometric arguments (estimates in $L^p$ of the gradient of an eigenfunction of the Laplacian on a manifold with positive curvature) or simple calculations using the Bakry-Emery calculus (reformulated in this language by Fathi) suffice to conclude. In a more probabilistic way, this means that pushforward of the (classical) diffusion $(X_t)_{t\geq 0}$ (see equation \eqref{class}) via $\nabla \varphi^{*}$ is in fact the (standard) Brownian motion on the dual weighted Riemmanian space $M:= (K,(\Hess \varphi)^{-1},\mu)$ (where $K=\supp(\mu)$) has $\mu$ as invariant measure provided that it satisfies the conditions of theorem \eqref{3.3}. This Riemannian space $M$ is in fact \textit{stochastically complete} (i.e. this canonical Brownian motion of this manifold is non-explosive and never reaches the boundary of $K$). The curvature condition then "heuristically" ensures that the behaviour of the Brownian motion on this manifold cannot be worse than the behaviour of the Brownian motion on the sphere.
\end{flushleft}
\begin{flushleft}
In the sequel we continue our exposure with the notations used above.
\newline
        Following the diffusion point of view of Kolesnikov \cite{Kol} section 2, we can observe that a part of the above statements still hold in the free context (thus providing the first step towards proving the free Klartag regularity estimates, but not the main conclusion).
        In fact, if we reformulate the above calculations a bit and introduce the following:
        \begin{definition}
        Let $K_{\varphi}:=\supp(\nu_{\varphi})$. We define the following free diffusion operator, which we call the Laplacian on the non-commutative metric measure space $M_{n.c}^*:=(K_{\varphi},\mathscr{J}\mathscr{D}\varphi, \nu_{\varphi})$ and which is defined for functions $f$ of class $\mathcal{C}^2$ as, 
        \begin{eqnarray}\label{lphi}
            \mathscr{L}_{\varphi}f(x)
            &:=&2\int \frac{1}{x-y}\frac{f'(x)-\mathscr{J}f(x,y)}{\mathscr{J}\mathscr{D}\varphi(x,y)}d\nu_{\varphi}(y)-f'(x)\cdot u'(\varphi'(x))\nonumber\\
            &=&2\int \frac{f'(x)(x-y)-(f(x)-f(y))}{(x-y)^2}(\mathscr{J}\mathscr{D}\varphi(x,y))^{-1}d\nu_{\varphi}(y)-f'(x)\cdot u'(\varphi'(x)),\nonumber
        \end{eqnarray}
        \end{definition}
\begin{remark}
    More generally a non-commutative Hessian manifold of the form $M_{n.c}^*:=(K_{\varphi},\mathscr{J}\mathscr{D}\varphi, \nu_{\varphi})$ is said to be isomorphic to its dual as $M_{n.c}^*:=(K_{\mu},\mathscr{J}\mathscr{D}\varphi^*, \mu)$, the diffeomorphism from $M_{n.c}^*$ to $M_{n.c}$ the diffeomorphism map being given by $x\mapsto \varphi'(x)$.
\end{remark}

    Therefore, from the previous calculations we can observe that the non-commutative Laplacian associated with $M_{n.c}$ has an interesting spectrum: The first non-zero eigenvalue is $-1$, and the corresponding eigenspace contains all linear functions. This can be seen in particular from equation \eqref{KalEin}. We can also notice the dependence w.r.t the free Riemannian metric (weight) $\mathscr{J}\mathscr{D}\varphi$ or \eqref{KalEin2}. In fact, we will see that the right multidimensional variant of this Laplacian will be given by a equation which look like \eqref{KalEin2} from which we will deduce the same kind of results. 
    \begin{proposition}
    The function $\varphi'$ is an eigenfunction of the operator $\mathscr{L}_{\varphi}$, always associated with the eigenvalue $-1$:
  \begin{equation}
      \mathscr{L}_{\varphi}\varphi'=-\varphi',\nonumber
  \end{equation}
  And to denote the {\it carré du champ} operator
  \begin{equation}
      \Gamma_{\varphi}(f,g)=(\mathscr{J}\mathscr{D}\varphi)^{-1}\cdot \mathscr{J}f\cdot \mathscr{J}g\nonumber
  \end{equation}
  and in a shorthand $\Gamma_{\varphi}(f):=\Gamma_{\varphi}(f,f)$, we can check that:
  \begin{equation}
      \Gamma_{\varphi}(\mathscr{D}\varphi)=\mathscr{J}\mathscr{D}\varphi,
  \end{equation}
  \end{proposition}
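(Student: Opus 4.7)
The plan is to prove both identities by direct substitution into the defining formulas, leveraging the free toric Kähler--Einstein equation \eqref{KalEin} derived earlier.

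For the eigenfunction relation, I would take $f = \mathscr{D}\varphi = \varphi'$ in the definition of $\mathscr{L}_{\varphi}$. The key observation is the tautology $\mathscr{J}(\varphi') = \mathscr{J}\mathscr{D}\varphi$, together with $(\varphi')'(x) = \varphi''(x)$. Substituting these into
\begin{equation*}
\mathscr{L}_{\varphi}\varphi'(x) = 2\int \frac{1}{x-y}\frac{\varphi''(x)-\mathscr{J}\mathscr{D}\varphi(x,y)}{\mathscr{J}\mathscr{D}\varphi(x,y)}\,d\nu_{\varphi}(y) - \varphi''(x)\cdot u'(\varphi'(x)),
\end{equation*}
the integral term is exactly the left-hand side of equation \eqref{KalEin}, which equals $\varphi''(x)\cdot u'(\varphi'(x)) - \varphi'(x)$. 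Cancelling the common $\varphi''(x)\cdot u'(\varphi'(x))$ term leaves $-\varphi'(x)$, yielding $\mathscr{L}_{\varphi}\varphi' = -\varphi'$.

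For the carré-du-champ identity, the computation is even more immediate: by definition
\begin{equation*}
\Gamma_{\varphi}(\mathscr{D}\varphi)(x,y) = (\mathscr{J}\mathscr{D}\varphi(x,y))^{-1}\cdot \mathscr{J}\varphi'(x,y)\cdot \mathscr{J}\varphi'(x,y),
\end{equation*}
and since $\mathscr{J}\varphi' = \mathscr{J}\mathscr{D}\varphi$, the two factors of $\mathscr{J}\mathscr{D}\varphi$ in the numerator combine with the inverse to give the single remaining factor $\mathscr{J}\mathscr{D}\varphi$.

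There is really no obstacle here, as both statements reduce to bookkeeping once one recognizes that $\mathscr{J}\mathscr{D}\varphi$ plays the dual role of Hessian-like weight in the operator $\mathscr{L}_{\varphi}$ and as the free difference quotient of the eigenfunction $\varphi'$ itself. The only subtle point is ensuring that the integrals are interpreted as principal values on $\supp(\nu_{\varphi})$ (as stipulated earlier in the paper), and that $\varphi$ is sufficiently regular (at least $\mathcal{C}^{2}$ and strictly convex, so that $\mathscr{J}\mathscr{D}\varphi > 0$ and the expressions are well-defined); these assumptions are in force throughout the section.
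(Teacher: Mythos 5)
Your argument is correct and is exactly the route the paper takes (the paper merely asserts that the eigenfunction relation "can be seen from equation \eqref{KalEin}", which is precisely the substitution $f=\varphi'$, $\mathscr{J}\varphi'=\mathscr{J}\mathscr{D}\varphi$, and cancellation of the $\varphi''(x)\,u'(\varphi'(x))$ term that you carry out explicitly). The carré-du-champ identity is likewise the same one-line computation, so there is nothing to add.
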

We can be more precise and show that this Laplacian does in fact generate a non-commutative Dirichlet form.
  \begin{lemma}
  For $f,g\in \mathcal{C}_{0}^{\infty}(\mathbb{R})$, we have:
  \begin{equation}
      \iint (\mathscr{J}\mathscr{D}\varphi)^{-1}\cdot \mathscr{J}f\cdot \mathscr{J}g\:d\nu_{\varphi}^{\otimes 2}=-\int f\cdot \mathscr{L}_{\varphi}gd\nu_{\varphi}=-\int g\cdot \mathscr{L}_{\varphi}fd\nu_{\varphi}
  \end{equation}
  \begin{proof}
      Denoting in a shorthand $\psi=\varphi^*$, we have therefore that $\mathscr{J}\mathscr{D}\psi=(\mathscr{J}\mathscr{D}\varphi(\mathscr{D}\psi))^{-1}$.
      \bigbreak
      Now notice that $\mu$ is pushed forward by $\psi'$ to $\nu_{\varphi}$. So we can check that the following for almost all $x\in \mathbb{R}$:
      \begin{align}
          &\mathscr{L}_{\varphi}f 
          (\psi'(x))\nonumber\\
          &=2\int \frac{f'(\psi'(x))(\psi'(x)-\psi'(y))-(f(\psi'(x))-f(\psi'(y)))}{(\psi'(x)-\psi'(y))^2}\mathscr{J}\mathscr{D}\psi(x,y)d\mu(y)-f'(\psi'(x))\cdot u'(x)\nonumber\\
          &=\mathscr{J}^*_u(\mathscr{J}f(\mathscr{D}\psi))(x),\nonumber
      \end{align}
      Hence, it is the divergence of the non-commutative vector field $\mathscr{J}f(\mathscr{D}\psi)$ with respect to $\mu:=\nu_u$, where $\mathscr{J}^*_u$ denote the adjoint of $\mathscr{J}$ with respect to the inner product $L^2(\nu_u)$, 
    that is for test functions $p,q,r$ we have:
      \begin{equation}
          \langle \mathscr{J}p,q\otimes r\rangle_{L^2(\nu_u^{\otimes 2})}=\langle p,\mathscr{J}^*_u(q\otimes r)\rangle_{L^2(\nu_u)},\nonumber
      \end{equation}
      
      \bigbreak
      Indeed, let's denote $M_{u}f:=-2\partial_x(I\otimes \nu_{u})\mathscr{J}f+u'\mathscr{D}f$ as the infinitesimal generator of the free Langevin diffusion:
    \begin{equation}
    dX_t=-\mathscr{D}u(X_t)+\sqrt{2}dS_t,\nonumber
    \end{equation} 
    Then, by using the remark right after $(6.9)$ in \cite{Hou} (which is based on \cite{V} Corollary 4.4 and Proposition 3.5), we have $M_u=\mathscr{J}_u^*\mathscr{J}$, on the set of functions $f$ which are at least of class $\mathcal{C}^2$, and moreover, 
    \begin{equation}
        \langle M_{u}f,f\rangle_{\mu}=\iint\bigg(\frac{f(x)-f(y)}{x-y}\bigg)^2d\mu(x)d\mu(y),\nonumber
    \end{equation}
      From which we finally deduce that (where we used the chain rule for the non-commutative derivative \eqref{chain} in the third equality):
      \begin{eqnarray}
          \int f\cdot \mathscr{L}_{\varphi}gd\nu_{\varphi}&=&\int f(\psi')\cdot\mathscr{L}_{\varphi}g(\psi')\:d\mu
          =-\iint \mathscr{J}\mathscr{D}\psi\cdot \mathscr{J}f(\psi')\cdot \mathscr{J}g(\psi')\:d{\mu}^{\otimes 2}\nonumber\\
          &=&-\iint (\mathscr{J}\mathscr{D}\varphi)^{-1}\cdot \mathscr{J}f\cdot \mathscr{J}g\:d{\nu}_{\varphi}^{\otimes 2},\nonumber
      \end{eqnarray}
  \end{proof}
  \qed
  \end{lemma}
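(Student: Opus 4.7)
The strategy I would pursue is to transport the problem over to the reference measure $\mu:=\nu_u$, where one already has a well-understood theory of the free Langevin generator $M_u=\mathscr{J}_u^{*}\mathscr{J}$ and its associated Dirichlet form (Voiculescu, and the computation recalled from \cite{Hou,V}). Let $\psi=\varphi^{*}$, so that $\psi'=(\varphi')^{-1}$ pushes $\mu$ forward onto $\nu_{\varphi}$ and, via the chain rule, one has the crucial identification
\[
\mathscr{J}\mathscr{D}\varphi(\psi'(x),\psi'(y))\;=\;\frac{1}{\mathscr{J}\mathscr{D}\psi(x,y)},
\]
which is the one-dimensional Legendre duality at the level of the non-commutative Hessian. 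The plan is to use this to rewrite $\mathscr{L}_{\varphi}$, composed with $\psi'$, as a known object on the $\mu$-side.

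The key computational step is to verify that for $f\in\mathcal{C}_0^{\infty}(\mathbb{R})$,
\[
(\mathscr{L}_{\varphi}f)\circ\psi' \;=\; \mathscr{J}_u^{*}\!\bigl(\mathscr{J}f\circ(\psi'\otimes\psi')\bigr) \;=\; M_u(f\circ\psi'),
\]
so that the weighted operator $\mathscr{L}_{\varphi}$ is nothing but the free Langevin generator conjugated by the transport map. To see this I would substitute $y\mapsto\psi'(y)$ in the integral defining $\mathscr{L}_{\varphi}f(\psi'(x))$, use that $\psi'$ pushes $\mu$ to $\nu_{\varphi}$, and replace every appearance of $(\mathscr{J}\mathscr{D}\varphi)^{-1}$ by $\mathscr{J}\mathscr{D}\psi$ using the duality above. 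The drift term $-f'(\psi'(x))\cdot u'(x)$ appearing in $\mathscr{L}_{\varphi}f\circ\psi'$ is precisely the drift $u'\cdot\mathscr{D}(f\circ\psi')$ since $(f\circ\psi')'(x)=f'(\psi'(x))\psi''(x)$ and $\psi''=\mathscr{D}\psi'$ gets absorbed by a similar manipulation in the non-local part.

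Once this identification is in place, the lemma reduces to the Voiculescu/Dabrowski integration-by-parts formula for $M_u$: for smooth test functions $F,G$,
\[
\int F\cdot M_uG\,d\mu \;=\; \iint \mathscr{J}F\cdot\mathscr{J}G\,d\mu^{\otimes 2}.
\]
Applying this with $F=f\circ\psi'$ and $G=g\circ\psi'$, pulling the chain rule back (using $\mathscr{J}(f\circ\psi')=\mathscr{J}f(\psi',\psi')\cdot\mathscr{J}\mathscr{D}\psi$), and using once more the duality $\mathscr{J}\mathscr{D}\psi=(\mathscr{J}\mathscr{D}\varphi\circ\psi')^{-1}$ together with the change of variables $\mu\mapsto\nu_{\varphi}$, converts the product $\mathscr{J}(f\circ\psi')\cdot\mathscr{J}(g\circ\psi')$ integrated against $\mu^{\otimes 2}$ into $(\mathscr{J}\mathscr{D}\varphi)^{-1}\cdot\mathscr{J}f\cdot\mathscr{J}g$ integrated against $\nu_{\varphi}^{\otimes 2}$. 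Symmetry of the right-hand side in $f$ and $g$ gives the two equal forms on the left-hand side of the claim.

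The main obstacle I anticipate is the first step: unwinding the three compositions (the chain rule for $\mathscr{J}$, the duality $\mathscr{J}\mathscr{D}\varphi\leftrightarrow\mathscr{J}\mathscr{D}\psi$, and the pushforward $\psi'_{\sharp}\mu=\nu_{\varphi}$) carefully enough so that the drift term on the $\varphi$-side, $f'(\varphi'(x))\cdot u'(\varphi'(x))$ in disguise, matches exactly the drift $u'\cdot\mathscr{D}(f\circ\psi')$ produced by $M_u$, without spurious boundary contributions. The $\mathcal{C}_0^{\infty}$ hypothesis and the compact support of $\nu_{\varphi}$ should be what controls this, letting the principal-value integrals be manipulated as genuine integrals.
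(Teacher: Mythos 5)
Your overall strategy (transport everything to the reference measure $\mu=\nu_u$ via $\psi'=(\varphi')^{-1}$, use the duality $\mathscr{J}\mathscr{D}\psi=(\mathscr{J}\mathscr{D}\varphi(\mathscr{D}\psi))^{-1}$, and invoke the Dirichlet-form identity for $M_u=\mathscr{J}_u^*\mathscr{J}$) is the same as the paper's, but your central identification is wrong, and it is exactly the point where your route diverges from the correct one. You claim $(\mathscr{L}_{\varphi}f)\circ\psi'=\mathscr{J}_u^*\bigl(\mathscr{J}f\circ(\psi'\otimes\psi')\bigr)=M_u(f\circ\psi')$. The first equality is (up to the sign convention) what the paper proves; the second is false unless $\psi''\equiv 1$. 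Indeed $M_u(f\circ\psi')=\mathscr{J}_u^*\mathscr{J}(f\circ\psi')$ and by the chain rule $\mathscr{J}(f\circ\psi')=\bigl(\mathscr{J}f\circ(\psi'\otimes\psi')\bigr)\cdot\mathscr{J}\mathscr{D}\psi$, so the two vector fields fed into $\mathscr{J}_u^*$ differ by the factor $\mathscr{J}\mathscr{D}\psi\neq 1$. You can see the failure already in the drift: transporting the definition of $\mathscr{L}_{\varphi}$ gives the drift term $f'(\psi'(x))\,u'(x)$, whereas $M_u(f\circ\psi')$ has drift $u'(x)\,(f\circ\psi')'(x)=u'(x)\,f'(\psi'(x))\,\psi''(x)$; the extra pointwise factor $\psi''(x)$ cannot be ``absorbed by the non-local part'' as you suggest. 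In other words, $\mathscr{L}_{\varphi}$ is \emph{not} the conjugation of the free Langevin generator $M_u$ by the transport map; it is the divergence (with respect to $\mu$) of the non-gradient field $\mathscr{J}f\circ(\psi'\otimes\psi')$.

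This error propagates into your final bookkeeping and produces the wrong answer: applying $\int F\,M_uG\,d\mu=\iint\mathscr{J}F\cdot\mathscr{J}G\,d\mu^{\otimes 2}$ with $F=f\circ\psi'$, $G=g\circ\psi'$ yields, after two applications of the chain rule, the integrand $\mathscr{J}f(\psi',\psi')\cdot\mathscr{J}g(\psi',\psi')\cdot(\mathscr{J}\mathscr{D}\psi)^2$, which after the change of variables is $\iint(\mathscr{J}\mathscr{D}\varphi)^{-2}\,\mathscr{J}f\cdot\mathscr{J}g\,d\nu_{\varphi}^{\otimes 2}$ --- off by one power of the weight from the claimed Dirichlet form $\iint(\mathscr{J}\mathscr{D}\varphi)^{-1}\,\mathscr{J}f\cdot\mathscr{J}g\,d\nu_{\varphi}^{\otimes 2}$; invoking the duality again only rewrites $\mathscr{J}\mathscr{D}\psi$, it does not remove a factor. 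The paper avoids this by never symmetrizing into a gradient form on the $\mu$-side: it writes $\int f\,\mathscr{L}_{\varphi}g\,d\nu_{\varphi}=\int (f\circ\psi')\,\mathscr{J}_u^*\bigl(\mathscr{J}g\circ(\psi'\otimes\psi')\bigr)d\mu$ and applies the adjoint relation directly, so that the chain rule is used only once (on $f\circ\psi'$), producing exactly one factor $\mathscr{J}\mathscr{D}\psi=(\mathscr{J}\mathscr{D}\varphi(\mathscr{D}\psi))^{-1}$ and hence the correct weight; symmetry in $f$ and $g$ of the resulting expression then gives the second equality of the lemma. To repair your argument, replace the identification with $M_u(f\circ\psi')$ by this divergence-form identification and pair against $g\circ\psi'$ via the adjoint property of $\mathscr{J}_u^*$.
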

  Put another way, it implies that the function $\varphi$ is a solution of a quasilinear free diffusion equation, where $\mathscr{L}_{\varphi}$ is the generator of the non-commutative Dirichlet form.
  \begin{equation}
      \mathcal{E}(f):=\iint (\mathscr{J}\mathscr{D}\varphi)^{-1}\cdot (\mathscr{J}f)^2\:d\nu_{\varphi}^{\otimes 2},
  \end{equation}
We can also deduce that if we denote $(P_t)_{t\geq 0}$ as the semigroup acting on Lipschitz functions induced by $\mathscr{L}_{\varphi}$, then for all $t\geq 0$ we have $P_t\varphi'=e^{-t}\varphi' $ and $\Gamma_{\varphi}(P_t\varphi')=e^{-2t}\mathscr{J}\mathscr{D}\varphi $. 
    \end{flushleft}
\end{flushleft}
\end{flushleft}
\bigbreak
\begin{flushleft}
    In the following, we will assume that we are given a tracial $W^*$-probability space $(\mathcal{M},\tau)$, which is supposed to contain a copy of $L(\mathbb{F}_{\infty})$ and to be filtered:
\newline
This means that there exists a free Brownian motion $(S_t)_{t\geq 0}$ and its corresponding filtration $(\mathcal{A}_t)_{t\geq 0}$ in $(\mathcal{M},\tau)$. 
\end{flushleft}
\begin{flushleft}
We then follow the "diffusion" approach of Klartag, section $4$ in \cite{Klart}, which uses diffusion processes and, in particular, the notion of "{\it stochastically complete}" weighted Riemannian manifolds to study the transport of measures. This approach will allow us to formally derive the apparently correct "bimodular" \textit{carré-du-champ} operators associated with this free diffusion. However, we will not investigate the global well-posedness of such free SDE's, leaving this for further investigations. We also mention that we insist on the fact that in the free case it seems that two \textit{carre-du-champ operator} appear (one is \textit{uni-modular} and was defined by Guionnet and Shlyakhtenko, section 6 in \cite{bak}). To the best of our knowledge, an important open problem in the free case is to prove or disprove that free functional inequalities (LSI, TI, HWI \cite{ov}, HSI \cite{LNP}, etc...) can be obtained using a non-commutative Bakry-Émery criterion \footnote{This seems to be the first time that the "bimodular" version of the \textit{carré-du-champ} that we present appears explicitly, although it was probably well known to the specialist community.}. Note that the approach we present here was largely inspired by the formulas found by Houdré and Popescu: section 5 in \cite{Hou}, which basically boils down to the free Ornstein-Uhlenbeck case that we will present in detail immediately afterwards. 
\end{flushleft}
\begin{definition}
    We say that the non-commutative Hessian manifold 
    $M_{n. c}^*: =(\mathbb{R},\mathscr{J}\mathscr{D}\varphi, \nu_{\varphi})$ is "stochastically complete" in the free sense if the free Ito diffusion process with generator $\mathscr{L}_{\varphi}$ is defined for all times $t\in[0, +\infty)$, which amounts to proving that for any bounded initial data $X_0=z\in \mathcal{M}_{s. a}$ freely independent of the free Brownian motion $S$, the following free SDE holds:
    \begin{equation}\label{car}
dX_t=-u'(\varphi'(X_t))dt+(2(\mathscr{J}\mathscr{D}\varphi(X_t))^{-1})^{\frac{1}{2}}\sharp dS_t,
    \end{equation}
  does not explode in finite time. Note that here from the convexity assumption on $\varphi$, we have $\forall (x,y)\in \mathbb{R}^2,  \mathscr{J}\mathscr{D}u(x,y)\geq 0$. Hence, we see here the operator  $\mathscr{J}\mathscr{D}\varphi:L^2(\mathcal{M})\rightarrow L^2_+(\mathcal{M}\bar{\otimes}\mathcal{M})$, i.e. $\forall x\in L^2(M), \mathscr{J}\mathscr{D}\varphi(x)=Q^*Q,\:\mbox{for some Q} \in L^2(\mathcal{M}\bar{\otimes}\mathcal{M})$.
\end{definition}
Below we continue our exposition under the previous assumptions. 
\bigbreak
We also extend the action of the operator $\mathscr{L}_{\varphi}$ to the tensor product of functions as follows.
First, for tensor products of simple functions, we set
\begin{equation}
    \mathscr{L}_{\varphi}^{\otimes 2}=\mathscr{L}_{\varphi}\otimes id+id\otimes \mathscr{L}_{\varphi},
\end{equation}
and we extend its action by linearity.
\newline
We can then check that this operator is negative and closable, which allows us to extend its action to a larger domain.
\begin{definition}\label{def7}
    We define the second carré du champ $\Gamma_{\varphi,2}$ adapted to free diffusion \eqref{car} for smooth functions $f,g$ as
    \begin{equation}
        \Gamma_{\varphi,2}(f,g)=\frac{1}{2}\bigg(\Gamma_{\varphi}(\mathscr{L}_{\varphi}^{\otimes 2}(\Gamma_{\varphi}(f,g))-\Gamma(\mathscr{L}_{\varphi}f,g)-\Gamma_{\varphi}(f,\mathscr{L}_{\varphi}g)\bigg),
    \end{equation}
    and we set $\Gamma_{\varphi,2}(f):=\Gamma_{\varphi,2}(f,f)$.
\end{definition}
\begin{example}
    Let us first make a few remarks about the most fundamental case. Let's choose $u=\frac{x^2}{2}$, for which we know that $\varphi=\frac{x^2}{2}$. Thus the free diffusion $(X_t)_{t\geq 0}$ defined in \eqref{car} reduces to the free Ornstein-Uhlenbeck process $dX_t=-X_tdt+\sqrt{2}dS_t$, with infinitesimal generator $\mathscr{L}$ (the free Ornstein-Uhlenbeck operator), which has a pure point spectrum: $sp(-L)=\mathbb{N}$.
    \end{example}
    \begin{flushleft}
        In this case we get (since we fix $\varphi=\frac{x^2}{2}$, we denote this for convenience $\Gamma$ instead of $\Gamma_{\varphi}$),
    \end{flushleft}
    \begin{equation}
        \Gamma(f)=(\mathscr{J}f)^2,\nonumber
    \end{equation}
   We also recall the non-commutative Bochner formula $\mathscr{J}\mathscr{L}=(\mathscr{L}^{\otimes 2}-\Id)\mathscr{J}$, where $Id$ is the identity operator on $L^2([-2,2]^2,\eta^{\otimes 2})\simeq L^2([-2,2],\eta)\bar{\otimes} L^2([-2,2],\eta)$ (for such a proof, see point $(5.3)$ of Proposition $7$ in \cite{Hou}, note also the sign conventions, since they consider the number operator $N:=-\mathscr{L}$ instead), we first get
 \begin{eqnarray}
    \Gamma(f,\mathscr{L}f)=\Gamma(\mathscr{L}f,f)&=& (\mathscr{J}\mathscr{L}f)\cdot \mathscr{J}f\nonumber\\
    &=& (\mathscr{L}^{\otimes 2}(\mathscr{J}f)-\mathscr{J}f)\cdot \mathscr{J}f\nonumber\\
    &=& \mathscr{L}^{\otimes 2}(\mathscr{J}f)\cdot \mathscr{J}f-(\mathscr{J}f)^2.
  \end{eqnarray}
        \begin{flushleft}
    We then have (after some simple calculations) the following expression for $\Gamma_2$:
    \end{flushleft}
    \begin{eqnarray}
        \Gamma_2(f)&=&
        \frac{1}{2}\bigg[\mathscr{L}^{\otimes 2}((\mathscr{J}f)^2)-2\mathscr{L}^{\otimes 2}(\mathscr{J}f)\cdot \mathscr{J}f+2(\mathscr{J}f)^2\bigg],\nonumber
    \end{eqnarray}
    And it suffices to prove that:
    \begin{equation}
        \mathscr{L}^{\otimes 2}((\mathscr{J}f)^2)-2\mathscr{L}^{\otimes 2}(\mathscr{J}f)\cdot \mathscr{J}f\geq 0.
    \end{equation}
    We use a spectral approach to prove that a non-commutative Bakry-\'Emery criterion holds for this free diffusion:
    \bigbreak
    In fact, take $f(x)=U_n(x),\: n\geq 0$, where $(U_n)_{n\geq 0}$ is the sequence of Tchebychev polynomials of the second kind, defined recursively by setting $U_0=1$, $U_1=X$ and $XU_n=U_{n+1}-U_{n-1}$. It is known that they form an orthonormal basis of eigenfunctions of the free Ornstein-Uhlenbeck operator, and hence of $L^2(\eta)$, with $\mathscr{L}U_n=-nU_n$. Thus the \textit{chaotic decomposition} takes the following form,
    \begin{equation}
        L^2(\eta)=\bigoplus_{n=0}^{\infty}Ker(\mathscr{L}+nId),
    \end{equation}
    The non-commutative derivatives of the Tchebychev polynomials also have a nice form (see e.g. prop 5.3.9 in \cite{BS}, which also gives its "infinite dimensional" version when the non-commutative derivative is replaced by the free Malliavin gradient).
    \begin{equation}
        \mathscr{J}U_n=\sum_{k=1}^nU_{k-1}\otimes U_{n-k},\: \forall n\geq 1\nonumber
    \end{equation}
     Then, using the product formula for Tchebychev polynomials (which holds in full generality for Wigner-Ito integrals, see Proposition $5.3$ in Biane and Speicher \cite{BS}), we get
    \begin{equation}
        (\mathscr{J}U_n)^2=\sum_{j,k=0}^n\sum_{p=0}^{k\wedge j-1}\sum_{q=0}^{n-(k\vee j)} U_{k+j-2p-2}\otimes U_{2n-k-j-2q},\nonumber
    \end{equation}
And so we use $\mathscr{L}^{\otimes 2}$,
\begin{equation}
    \mathscr{L}^{\otimes 2}((\mathscr{J}U_n)^2)=\sum_{j,k=1}^n\sum_{p=0}^{k\wedge j-1}\sum_{q=0}^{n-(k\vee j)} (2p+2q-2n+2).U_{k+j-2p-2}\otimes U_{2n-k-j-2q},\nonumber
\end{equation}
We can also simply 
\begin{eqnarray}
    \mathscr{L}^{\otimes 2}(\mathscr{J}U_n)&=&(1-n)\sum_{k=0}^n U_{k-1}\otimes U_{n-k}\nonumber\\
    &=&(1-n)\mathscr{J}U_n,\nonumber
    \end{eqnarray}
And so we have,
\begin{eqnarray}
   \mathscr{L}^{\otimes 2}(\mathscr{J}U_n)\cdot \mathscr{J}U_n=(1-n)(\mathscr{J}U_n)^2 &=&\sum_{j,k=1}^n\sum_{p=0}^{k\wedge j-1}\sum_{q=0}^{n-(k\vee j)} (1-n).U_{k+j-2p-2}\otimes U_{2n-k-j-2q},\nonumber
\end{eqnarray}
This brings us to the \begin{equation}
    \mathscr{L}^{\otimes 2}((\mathscr{J}U_n)^2)-2\mathscr{L}^{\otimes 2}(\mathscr{J}U_n)\cdot \mathscr{J}U_n= 2\sum_{j,k=1}^n\sum_{p=1}^{k\wedge j-1}\sum_{q=1}^{n-(k\vee j)} (p+q). U_{k+j-2p-2}\otimes U_{2n-k-j-2q}\nonumber
\end{equation}
which, by tedious recursion, can be shown to be always positive.
\begin{flushleft}
So, by expanding $f$ in Tchebychev polynomials, this finally gives that the non-commutative Bakry-\'Emery criterion is satisfied:
\begin{equation}
    \Gamma_2(f)\geq \Gamma(f).
\end{equation}
\end{flushleft}
\begin{flushleft}
More generally, we can prove the following decomposition, but proving a general non-commutative Bakry-\'Emery criterion seems to be another challenge, since a spectral approach is impossible. Indeed, we recall from the work of Anshelevich \cite{anshe} that the only free diffusion operator of the form $\mathcal{L}_{\varphi}$ which has orthogonal polynomial eigenfunctions is the free Ornstein-Uhlenbeck operator, i.e. when $\varphi=\frac{x^2}{2}$. This is currently the main obstacle to proving a non-commutative curvature criterion.
\end{flushleft}
\begin{proposition}
    Let $\varphi$ satisfy the previous hypothesis, then for smooth test functions $f$ we have
    \begin{equation}
        \Gamma_{\varphi,2}(f)=\frac{1}{2}\bigg(\mathscr{L}_{\varphi}^{\otimes 2}((\mathscr{J}\mathscr{D}\varphi)^{-1}\cdot (\mathscr{J}f)^2)-2(\mathscr{J}\mathscr{D}\varphi)^{-1} \cdot \mathscr{L}_{\varphi}^{\otimes 2}(\mathscr{J}f)\cdot \mathscr{J}f +2(\mathscr{J}f)^2\bigg)
    \end{equation}
\end{proposition}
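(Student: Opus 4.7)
The plan is to reduce the identity to a single Bochner-type relation for $\mathscr{L}_{\varphi}$ and then to substitute. Reading Definition~\ref{def7} as the standard Bakry--\'Emery expression
\[
\Gamma_{\varphi,2}(f,g) = \tfrac{1}{2}\bigl(\mathscr{L}_{\varphi}^{\otimes 2}(\Gamma_{\varphi}(f,g)) - \Gamma_{\varphi}(\mathscr{L}_{\varphi}f,g) - \Gamma_{\varphi}(f,\mathscr{L}_{\varphi}g)\bigr),
\]
I first substitute $\Gamma_{\varphi}(f,g) = (\mathscr{J}\mathscr{D}\varphi)^{-1}\mathscr{J}f\cdot\mathscr{J}g$ into both terms. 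The first term immediately yields $\mathscr{L}_{\varphi}^{\otimes 2}((\mathscr{J}\mathscr{D}\varphi)^{-1}(\mathscr{J}f)^2)$, while the cross term becomes $-2(\mathscr{J}\mathscr{D}\varphi)^{-1}\mathscr{J}(\mathscr{L}_{\varphi}f)\cdot\mathscr{J}f$. The whole proposition thus collapses to the single non-commutative Bochner-type identity
\[
\mathscr{J}(\mathscr{L}_{\varphi}f) \;=\; \mathscr{L}_{\varphi}^{\otimes 2}(\mathscr{J}f) \;-\; \mathscr{J}\mathscr{D}\varphi\cdot\mathscr{J}f,
\]
which is the natural $\varphi$-weighted generalization of the free Ornstein--Uhlenbeck formula $\mathscr{J}\mathscr{L} = (\mathscr{L}^{\otimes 2} - \Id)\mathscr{J}$ recalled in the Example preceding the statement (and reduces to it when $\varphi = x^2/2$, since then $\mathscr{J}\mathscr{D}\varphi \equiv 1$). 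Granted this identity, multiplication by $-2(\mathscr{J}\mathscr{D}\varphi)^{-1}\mathscr{J}f$ produces the desired term $-2(\mathscr{J}\mathscr{D}\varphi)^{-1}\mathscr{L}_{\varphi}^{\otimes 2}(\mathscr{J}f)\cdot\mathscr{J}f$ together with the residue $+2(\mathscr{J}f)^2$, which is precisely the right-hand side of the proposition.

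The main work is therefore to establish this free Bochner identity for the weighted Laplacian $\mathscr{L}_{\varphi}$. My plan is to expand $\mathscr{L}_{\varphi}f(x) - \mathscr{L}_{\varphi}f(y)$ directly from the integral representation of $\mathscr{L}_{\varphi}$, introduce a common auxiliary variable $z$ in both integrals, and divide by $x-y$. After inserting a symmetric pivot that adds and subtracts terms of the form $\mathscr{J}f(x,z)(\mathscr{J}\mathscr{D}\varphi(x,y))^{-1}$ and its $y$-counterpart, the kernel should split into three pieces: one whose $z$-integration reproduces $(\mathscr{L}_{\varphi}\otimes\Id)(\mathscr{J}f)(x,y)$, a symmetric piece producing $(\Id\otimes\mathscr{L}_{\varphi})(\mathscr{J}f)(x,y)$, and a residual drift contribution coming from the terms $f'(\cdot)u'(\varphi'(\cdot))$. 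The free K\"ahler--Einstein equation \eqref{KalEin} satisfied by $\varphi$ is then used to eliminate the boundary contributions of the drift, and the remaining residue is expected to reassemble exactly into the correction $-\mathscr{J}\mathscr{D}\varphi(x,y)\cdot\mathscr{J}f(x,y)$.

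The hard part will be the combinatorial bookkeeping of the principal-value double integrals with the weight $(\mathscr{J}\mathscr{D}\varphi)^{-1}$: the splitting must be carried out in a manifestly $x\leftrightarrow y$-symmetric way in order to recover the full symmetric operator $\mathscr{L}_{\varphi}^{\otimes 2}$, and the simultaneous cancellation of boundary and drift terms rests crucially on the Euler--Lagrange relation \eqref{KalEin}. Should this direct calculation prove too unwieldy, a cleaner route is duality-based: test the claimed identity against arbitrary simple tensors $g\otimes h \in L^2(\nu_{\varphi}^{\otimes 2})$ and invoke the symmetry of $\mathscr{L}_{\varphi}$ in $L^2(\nu_{\varphi})$ established in the preceding Dirichlet-form lemma to reduce the check to a single scalar integration by parts, in which the correction $-\mathscr{J}\mathscr{D}\varphi\cdot\mathscr{J}f$ appears naturally as the conformal correction induced by the Hessian weight. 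In either case, once the Bochner identity is secured, the remainder of the proposition is pure algebraic substitution.
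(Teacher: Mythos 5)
Your proposal is correct and takes essentially the same route as the paper: the paper likewise reduces the claim to the non-commutative Bochner formula $\mathscr{J}\mathscr{L}_{\varphi}=(\mathscr{L}_{\varphi}^{\otimes 2}-\mathscr{J}\mathscr{D}\varphi)\mathscr{J}$ and then performs exactly the algebraic substitution you describe (computing $\Gamma_{\varphi}(\mathscr{L}_{\varphi}f,f)=(\mathscr{J}\mathscr{D}\varphi)^{-1}\mathscr{L}_{\varphi}^{\otimes 2}(\mathscr{J}f)\cdot\mathscr{J}f-(\mathscr{J}f)^2$). The only difference is that the paper states this Bochner identity without proof, whereas you sketch a (not carried out) derivation of it; that extra step is consistent with, but not required by, the paper's argument.
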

\begin{proof}
    \begin{equation}
        \Gamma_{\varphi}(f)=(\mathscr{J}\mathscr{D}\varphi)^{-1}\cdot (\mathscr{J}f)^2.\nonumber
    \end{equation}
 Then by using the non-commutative Bochner formula:
 \begin{equation}
     \mathscr{J}\mathscr{L}_{\varphi}=(\mathscr{L}_{\varphi}^{\otimes 2}-\mathscr{J}\mathscr{D}\varphi)\mathscr{J},\nonumber
 \end{equation}
 i.e for smooth test functions: $\mathscr{J}(\mathscr{L}_{\varphi}f)=\mathscr{L}_{\varphi}^{\otimes 2}(\mathscr{J}f)-\mathscr{J}\mathscr{D}\varphi\cdot\mathscr{J}f$,
 \begin{flushleft}
 We arrive to,
 \end{flushleft}
 \begin{eqnarray}
        \Gamma_{\varphi}(\mathscr{L}_{\varphi}f,f)&=&(\mathscr{J}\mathscr{D}\varphi)^{-1}\cdot\mathscr{J}\mathscr{L}_{\varphi}f\cdot\mathscr{J}f\nonumber\\
        &=&(\mathscr{J}\mathscr{D}\varphi)^{-1} (\mathscr{L}_{\varphi}^{\otimes 2}(\mathscr{J}f)- \mathscr{J}\mathscr{D}\varphi\cdot \mathscr{J}f)\cdot \mathscr{J}f\nonumber\\
        &=&(\mathscr{J}\mathscr{D}\varphi)^{-1} \cdot \mathscr{L}_{\varphi}^{\otimes 2}(\mathscr{J}f)-(\mathscr{J}f)^2.\nonumber
    \end{eqnarray}
Therefore \begin{eqnarray}
    \Gamma_{\varphi,2}(f)&=&\frac{1}{2}\bigg(\mathscr{L}_{\varphi}^{\otimes 2}((\mathscr{J}\mathscr{D}\varphi)^{-1}\cdot (\mathscr{J}f)^2)-2(\mathscr{J}\mathscr{D}\varphi)^{-1} \cdot \mathscr{L}_{\varphi}^{\otimes 2}(\mathscr{J}f)\cdot \mathscr{J}f +2(\mathscr{J}f)^2\bigg).\nonumber
\end{eqnarray}
\end{proof}
\begin{flushleft}
    However, the calculations required to prove the next conjecture seem very complicated at the moment (especially since we cannot use a spectral approach), so we will leave them for further investigation. Worst of all, even if the conjecture could be proved, there is (to our knowledge) no proof of the free Logarithmic-Sobolev inequality via a non-commutative Bakry-Emery criterion, so we cannot get to the free Klartag estimate for now.
\end{flushleft}
\begin{conjecture}
   Under the above notations and assumptions we have for all smooth functions $(f,g)$:
    \begin{equation}
        \Gamma_{\varphi,2}(f)\geq \frac{1}{2}\Gamma_{\varphi}(f),
    \end{equation}
So, using the conventions of Guionnet and Shlyakhtenko \cite{bak}, we say that
$M_{n.c}:=(K_{\varphi},\mathscr{J}\mathscr{D}\varphi, \nu_{\varphi})$ satisfies $CD(2,\infty)$.
\end{conjecture}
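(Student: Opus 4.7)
The plan is to transcribe Kolesnikov's classical argument \cite{Kol}, which derives $\Ric\geq \tfrac{1}{2}g$ for the Hessian manifold $(M,\Hess\varphi,e^{-\varphi}dx)$ of a log-concave moment measure $\mu=e^{-V}dx$ by differentiating the toric Kähler-Einstein equation $\log\det\Hess\varphi=\varphi-V(\nabla\varphi)$ twice as a tensor identity and combining the resulting Bochner formula with the non-negativity $\Hess V\geq 0$. The constant $1/2$ is of linear-algebraic origin, appearing in the decomposition $\Ric_{\Hess\varphi}=\tfrac{1}{2}g+(\text{terms controlled by }\Hess V)$, so it should survive the free transcription provided one can control the bimodular calculus.

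First, I would take a second derivative of the logarithmic free Kähler-Einstein equation \eqref{4.3}, refining the computation \eqref{logmonge} used in the proof of Proposition \ref{cafarelli}, but now testing against an arbitrary smooth function $f$ rather than only monitoring $\varphi''$ at a maximum point. The factor $u''(\varphi'(x))(\varphi''(x))^2$ appearing on the right-hand side is the free counterpart of $\Hess V(\nabla\varphi)\otimes(\Hess\varphi)^2$ and should, after pushforward through $\mathscr{D}\varphi$, supply a non-negative curvature term on the target side. The target output is a refinement of the non-commutative Bochner identity $\mathscr{J}\mathscr{L}_\varphi=(\mathscr{L}_\varphi^{\otimes 2}-\mathscr{J}\mathscr{D}\varphi)\mathscr{J}$ by an explicit curvature tensor $\mathscr{R}_\varphi$ of the form $\tfrac{1}{2}\mathscr{J}\mathscr{D}\varphi+(\text{convex contribution from }u)$.

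Second, I would take the formula for $\Gamma_{\varphi,2}(f)$ established in the proposition just above the conjecture, apply the Bochner identity to the term $\mathscr{L}_\varphi^{\otimes 2}(\mathscr{J}f)$, and invoke a non-commutative Leibniz rule to distribute $\mathscr{L}_\varphi^{\otimes 2}$ across the product $(\mathscr{J}\mathscr{D}\varphi)^{-1}\cdot(\mathscr{J}f)^2$. The aim is to rewrite the result as $\tfrac{1}{2}\Gamma_\varphi(f)+(\text{manifestly non-negative bimodular form})+(\text{non-negative convexity term from }u)$. The free Ornstein-Uhlenbeck case treated in the excerpt, where $\varphi=x^2/2$ and the Tchebychev product formula closes positivity by brute force, should serve as the base case of this decomposition and guide the algebra.

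The main obstacle, explicitly flagged by the author, is the absence of a spectral diagonalization for $\mathscr{L}_\varphi$ outside the quadratic case, by Anshelevich's classification \cite{anshe} of free Meixner systems. In particular, the Leibniz expansion of $\mathscr{L}_\varphi^{\otimes 2}\bigl((\mathscr{J}\mathscr{D}\varphi)^{-1}\cdot(\mathscr{J}f)^2\bigr)$ produces bimodular cross terms living in $L^2(\mathcal{M}\bar\otimes\mathcal{M})$ whose positivity cannot be checked pointwise, and the classical trick of diagonalizing $\Hess\varphi$ at a given point has no direct free counterpart. The crux is therefore the design of a non-commutative rearrangement or Cauchy-Schwarz-type inequality compatible with the bimodular pairing $\langle\,\cdot\,,(\mathscr{J}\mathscr{D}\varphi)^{-1}\,\cdot\,\rangle_{L^2(\nu_\varphi^{\otimes 2})}$ that would reduce the conjecture to a manifestly positive algebraic identity; equivalently, one needs a free Hessian comparison lemma adapted to this non-tracial bimodular setting. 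Formulating and proving such an inequality is, in my view, the essential missing ingredient before the rest of the plan can be made rigorous.
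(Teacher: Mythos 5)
The statement you are trying to prove is left as an open conjecture in the paper: the author gives no proof, explicitly writes that ``the calculations required to prove the next conjecture seem very complicated at the moment (especially since we cannot use a spectral approach)'', and only verifies the quadratic case $\varphi=\tfrac{x^2}{2}$, where the free Ornstein--Uhlenbeck operator diagonalizes on Tchebychev polynomials and positivity of $\mathscr{L}^{\otimes 2}((\mathscr{J}U_n)^2)-2\mathscr{L}^{\otimes 2}(\mathscr{J}U_n)\cdot\mathscr{J}U_n$ can be checked term by term. Your proposal does not close this gap: it is a roadmap that mirrors Kolesnikov's classical derivation of $\Ric\geq\tfrac{1}{2}g$, but at the decisive step --- rewriting $\Gamma_{\varphi,2}(f)-\tfrac{1}{2}\Gamma_{\varphi}(f)$ as a manifestly non-negative bimodular form --- you simply postulate a ``non-commutative rearrangement or Cauchy--Schwarz-type inequality'' or a ``free Hessian comparison lemma'' without formulating it, let alone proving it. That missing lemma is not a technical afterthought; it is the entire content of the conjecture. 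In the classical argument the constant $1/2$ emerges from differentiating $\log\det\Hess\varphi=\varphi-V(\nabla\varphi)$ twice, diagonalizing $\Hess\varphi$ at a point, and using that the resulting third-derivative (Calabi-type) term $\varphi_{ijk}\varphi^{ia}\varphi^{jb}\varphi^{kc}\varphi_{abc}$ and the term involving $\Hess V\geq 0$ are pointwise non-negative; in the free setting the analogues of these terms live in $L^2(\nu_{\varphi}^{\otimes 2})$ (or in the bimodule $\mathcal{M}\bar\otimes\mathcal{M}$), there is no pointwise diagonalization, and you give no mechanism by which their positivity against the weight $(\mathscr{J}\mathscr{D}\varphi)^{-1}$ could be established. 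You correctly identify Anshelevich's classification as the obstruction to a spectral proof, but identifying the obstruction is not overcoming it.

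Two further cautions if you pursue this route. First, differentiating the one-dimensional equation \eqref{logmonge} ``tested against an arbitrary $f$'' does not by itself produce a Bochner identity refined by a curvature tensor: the Bochner formula $\mathscr{J}\mathscr{L}_{\varphi}=(\mathscr{L}_{\varphi}^{\otimes 2}-\mathscr{J}\mathscr{D}\varphi)\mathscr{J}$ is already used in the proposition preceding the conjecture to obtain the expression for $\Gamma_{\varphi,2}(f)$, and any ``refinement'' by a tensor $\mathscr{R}_{\varphi}$ would have to be derived, with domain and closability issues for $\mathscr{L}_{\varphi}^{\otimes 2}$ addressed, before it can be combined with a Leibniz expansion of $\mathscr{L}_{\varphi}^{\otimes 2}\bigl((\mathscr{J}\mathscr{D}\varphi)^{-1}\cdot(\mathscr{J}f)^2\bigr)$; note that $\mathscr{L}_{\varphi}^{\otimes 2}$ is not a derivation on the pointwise product of kernels, so the ``non-commutative Leibniz rule'' you invoke also needs to be stated and justified. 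Second, be careful with the normalization: the conjecture asserts $\Gamma_{\varphi,2}\geq\tfrac{1}{2}\Gamma_{\varphi}$ (the paper's $CD(2,\infty)$ convention), while your classical template produces $\Ric\geq\tfrac{1}{2}g$ for the metric $\Hess\varphi$; matching the constants through the change of carré du champ (the weight $(\mathscr{J}\mathscr{D}\varphi)^{-1}$) is part of what must be checked, as the quadratic base case in the paper yields $\Gamma_2\geq\Gamma$, not $\tfrac{1}{2}\Gamma$. In short, what you have is a sensible strategy statement that coincides with the author's own heuristic, not a proof; the conjecture remains open.
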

A proof of such an estimate, analogous to Kolesnikov's result \cite{Kol}, would be very useful to understand how far the parallelism can be made between the classical notions of a weighted Riemannian manifold with Hessian matric given by a convex function or from its complexification given by a certain toric K\"ahler manifold and associated with a centred convex body $K$ or equivalently with a centred log-concave measure $e^{-\varphi}dx$, and their free counterparts.
\newline
In fact, we recall how to associate to the uniform measure on a convex set a suitable toric K\"ahler manifold which is both Einstein with $\lambda=\frac{1}{2}$ and Fano with thus a "good" curvature criterion. The construction is as follows.
\bigbreak
Let $\mu$ be the uniform measure on $K$, then we know (see the full details given in Klartag's work \cite{Klart2}) that it is the moment measure of a smooth convex function $\psi$. We denote the complex torus as $\mathbb{T}_{\mathbb{C}}^n:=\mathbb{C}^n/({\sqrt{-1}}\mathbb{Z}^n)$ equipped with its real torus action:
\newline
$t\cdot(x+\sqrt{-1}y)=x+\sqrt{-1}(y+t),\: t\in \mathbb{T}^n:=\mathbb{R}^n/\mathbb{Z}^n,\:x+\sqrt{-1}y\in \mathbb{T}_{\mathbb{C}}^n$. 
\newline
After a suitable complexification to transform the convex function $\psi$ into a complex p.s.h. function (plurisubharmonic: upper-semicontinuous and subharmonic on each complex line) which amounts to set $\psi_{\mathbb{C}}(x+\sqrt{-1}y)=\psi(x),\:x+\sqrt{-1}y\in \mathbb{T}_{\mathbb{C}}^n$ (note also that in this case the function $\psi_{\mathbb{C}}$ is $\mathbb{T}^n$-invariant), the toric Kähler-Einstein manifold $M:=(\mathbb{T}_{\mathbb{C}}^n, \omega_{\psi})$ equipped with its Kähler form $\omega_{\psi}:=2\sqrt{-1}\partial\bar{\partial}\psi=\frac{\sqrt{-1}}{2}\sum_{i,j=1}^n\omega_{i,j}dz_i\wedge d\bar{z}_j$ (where we set as usual $\psi_{i,j}:=\partial^2{\psi}/(\partial x_i\partial x_j)$) has many pleasant properties: the most important one being that its Ricci tensor where $\Ric \omega=-\sqrt{-1}\partial\bar{\partial}\log(\det\psi_{i,j})$ is exactly equals to half the Riemmanian metric: $\Ric \omega_{\psi}=\frac{1}{2}\omega_{\psi}$ (thus a Einstein metric and this is called a toric Fano manifold since its first Chern class is positive, i.e. $c_1(M)>0$) which makes it very similar to a Gaussian space (in particular to obtain Log-Sobolev, Gaussian concentration, isoperimetric inequalities...).
\newline
Moreover, the \textit{moment map} $\mathbb{T}_{\mathbb{C}}^n\ni x+\sqrt{-1}y\mapsto \nabla \psi(x)\in \mathbb{R}^n$ send the volume measure induced by the Riemannian metric, i.e. $\Vol:=w_{\psi}^n/n!=w_{\psi}\wedge\ldots\wedge w_{\psi}/n!$ to the uniform measure $\mu$ on $K$ (see Abreu \cite{abreu}).
\begin{remark}
               We are also grateful to D. Shlyakhtenko, who also pointed out to us that this kind of free SDE's \eqref{car} also arise in the study of "\textit{free dilations}", i.e. $\alpha_t:M\rightarrow \tilde{M}: =M*L(\mathbb{F}_{\infty})$ which are trace preserving $*$-automorphisms such that $\lVert \alpha_t(x)-x\rVert_{2}\underset{t\rightarrow 0}{\rightarrow} 0$ which also solve free SDE's. This has many applications to prove structural properties of von Neumann algebras, especially in the group case, i.e. $M=L(\Gamma)$ where $\Gamma$ is an icc group. Indeed, it was noticed by Dabrowski and Ioana \cite{DabIoa} that the existence of unbounded $1$-cocycle coming from unbounded derivations implies strong restrictions on the structure of the group in order to be able to have a free dilation (based on another earlier work of Dabrowski's \cite{Dab10}). This could be very helpful in order to use the breakthrough of Popa's deformation/rigidity theory to obtain structural properties on these group von Neumann algebras.
\end{remark}
\section{Transporting free Stein kernels to other free Gibbs measure}
In this short section we will show that, as Fathi did in section 5 of \cite{mm}, we are able to construct free Stein kernels with respect to other free Gibbs reference measures in order to compare, for example, $\mu$ with a free Gibbs state $\nu_V$ with potential $V$ smooth and convex, provided it exists. This will be possible as soon as the pushforward measure $\mu_V$ of the measure $\mu$ by $\mathscr{D}V:=V'$ has a barycenter at the origin. It is also, as expected, exactly the free counterpart of the kernel discovered by Fathi in section 5 of \cite{mm}.
\begin{theorem}
Let $\nu_{V}$ be a free Gibbs measure associated with the potential $V$. Suppose $V$ is strictly convex and $\mathcal{C}^2$. Suppose also that we are given a measure $\mu$ which has a density w.r.t. the Lebesgue measure and is supported on a compact interval. Let $\mu_V$ be the pushforward of $\mu$ by $\mathscr{D}V$ and suppose that $\int xd\mu_V(x)=0$. Then
\begin{equation}
(x,y)\mapsto A_V(V'(x),V'(y)).(\mathscr{J}\mathscr{D}V)(x,y))^{-1}:=A_V(V'(x),V'(y))\frac{x-y}{V'(x)-V'(y)}
\end{equation}
is a free Stein kernel for $\mu$ with respect to the potential $V$, where $A_V:\mathbb{R}^2\rightarrow \mathbb{R}$ is an arbitrary free Stein kernel for $\mu_V$ with respect to the standard semicircular potential $\nu_{\frac{1}{2}x^2}$.
\end{theorem}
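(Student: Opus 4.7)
The plan is to reduce the claim to the defining Stein kernel identity for $A_V$ relative to $\mu_V$ via the change of variables induced by the diffeomorphism $V'$. Since $V$ is $\mathcal{C}^2$ and strictly convex, $V'$ is a strictly increasing $\mathcal{C}^1$-diffeomorphism from the support of $\mu$ onto the support of $\mu_V := (V')_{\sharp}\mu$, with smooth inverse $(V')^{-1}$ by the inverse function theorem. Because $\mu$ has a density and compact support, $\mu_V$ does too, so it is non-atomic, and together with the assumption $\int y\, d\mu_V(y)=0$ this guarantees the existence of at least one admissible Stein kernel $A_V$ for $\mu_V$ with respect to the semicircular potential $\tfrac{1}{2}x^2$.

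First, given any smooth test function $g$ on $\supp(\mu)$, I would lift it to the test function $\tilde g := g\circ (V')^{-1}$ on $\supp(\mu_V)$. By the change of variables formula and the very definition of the pushforward,
\begin{equation*}
\int V'(x)\,g(x)\,d\mu(x) = \int y\,\tilde g(y)\,d\mu_V(y).
\end{equation*}
Applying the Stein kernel identity for $A_V$ on the right-hand side and then changing variables back via $y_i = V'(x_i)$, so that $\tilde g(V'(x_i)) = g(x_i)$, one obtains
\begin{equation*}
\int V'(x)\,g(x)\,d\mu(x)
= \iint A_V(V'(x_1),V'(x_2))\,\frac{g(x_1)-g(x_2)}{V'(x_1)-V'(x_2)}\,d\mu(x_1)\,d\mu(x_2).
\end{equation*}

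Finally, I would multiply and divide the integrand by $x_1-x_2$, splitting it as the product of the candidate kernel $A_V(V'(x_1),V'(x_2))\cdot \tfrac{x_1-x_2}{V'(x_1)-V'(x_2)}$ with the free difference quotient $\mathscr{J}g(x_1,x_2)$. This is exactly the Stein equation for $\mu$ relative to the potential $V$, which proves the theorem.

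The computation itself is short; the only genuine subtlety is verifying that the relaxed Stein equation for $\mu_V$ (in the measure formulation) is indeed available. This is where one needs the absolute continuity of $\mu$, transferred to $\mu_V$ via the $\mathcal{C}^1$-diffeomorphism $V'$, so that the diagonal has $\mu_V\otimes\mu_V$-measure zero and the pairing $\iint A_V(y_1,y_2)\,\tfrac{\tilde g(y_1)-\tilde g(y_2)}{y_1-y_2}\,d\mu_V^{\otimes 2}$ makes sense; this was recorded in the discussion following Definition~1. Strict convexity of $V$ ($V'' > 0$) then also ensures that the factor $\tfrac{x_1-x_2}{V'(x_1)-V'(x_2)}$ extends continuously to the diagonal by $1/V''(x)$, so there is no integrability issue.
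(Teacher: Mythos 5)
Your proof is correct and follows essentially the same route as the paper's: compose the test function with $(V')^{-1}=(V^*)'$, use the pushforward identity to rewrite $\int V'(x)f(x)\,d\mu$ as $\int x\,\tilde g(x)\,d\mu_V$, apply the semicircular Stein equation for $A_V$, change variables back, and insert the factor $\frac{x-y}{V'(x)-V'(y)}$, justified by strict convexity so that $V'(x)\neq V'(y)$ off the diagonal. Your added remarks on the non-atomicity of $\mu_V$ and on the existence of $A_V$ (which the paper attributes to C\'ebron--Fathi--Mai under the centering hypothesis $\int x\,d\mu_V=0$) are consistent with the paper's discussion and do not change the argument.
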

\begin{proof}
Let's denote $\mu_{V}$ the pushforward of $\mu$ by $V'$. Then we set $g(x)=f((V^*)'(x))$ for an arbitrary test function $f$, and we have:
\begin{eqnarray}\label{gs}
\int_{\mathbb{R}}V'(x)f(x)d\mu(x)&=&\int_{\mathbb{R}}V'(x)g(V'(x))d\mu(x)\nonumber\\
&=&\int_{\mathbb{R}}xg(x)d\mu_{V}(x)\nonumber
\end{eqnarray}
Now let's assume that $\mu_{V}$ admits free Stein kernels with respect to the standard semicircular potential $\nu_{\frac{1}{2}x^2}$. This is always ensured by the result of Cébron, Fathi and Mai in \cite{FCM}, provided that
\begin{equation}
\int xd{\mu_V}=\int_{\mathbb{R}}V'(x)d\mu=0,\nonumber
\end{equation}
\newline
Now denote $A_V:\mathbb{R}^2\rightarrow \mathbb{R}$ such a kernel, then \eqref{gs} becomes
\begin{eqnarray}
\int V'(x)f(x)d\mu&=& \iint A_V(x,y)\frac{g(x)-g(y)}{x-y}d\mu_V(x)d\mu_V(y)\nonumber\\
&=&\iint A_V(V'(x),V'(y))\frac{g(V'(x))-g(V'(y))}{V^{'}(x)-V^{'}(y)}d\mu(x)d\mu(y)\nonumber\\
&=&\iint A_V(V'(x),V'(y))\frac{f(x)-f(y)}{V^{'}(x)-V^{'}(y)}d\mu(x)d\mu(y)\nonumber\\
&=&\iint A_V(V'(x),V'(y))\frac{x-y}{V'(x)-V'(y)}\frac{f(x)-f(y)}{x-y}d\mu(x)d\mu(y),\nonumber
\end{eqnarray}
Where we used the fact that $V''(x)>0$ for $x\in \supp(\mu)$ and thus that $V'(x)-V'(y)\neq 0$, $\mu$ almost everywhere.
\end{proof}\qed

\section{Connexion with the free (weighted) Poincaré inequalities and free diffusions processes}
In the free case, free Poincar\'e inequalities were discovered by Voiculescu in an unpublished note (see lemma 2 in \cite{DAB}) and studied by many authors (see for example Ledoux and Popescu \cite{LP} for a one-dimensional proof and various properties of the Poincaré constant under smooth changes of variables...). Note that these inequalities also hold in the multidimensional case, and have profound consequences for establishing regularity properties of von Neumann algebras, e.g. in the seminal work of Dabrowski \cite{DAB} it was proved that if $X=(x_1,\ldots,x_n)$ has finite free Fisher information, then $W^*(X)$ is a factor which doesn't have the $\Gamma$ property of Murray and von Neumann, and in particular is non-amenable. \footnote{In an unpublished note which Dabrowski personally communicated to the author, Dabrowski actually proved that the absence of the $\Gamma$ property still holds under the weaker assumption of a full non-microstates free entropy dimension: $\delta^*(X)=n$, and expect the same kind of result to hold under the weakest possible condition, i.e. when $\delta^*(X)>1$, as Voiculescu proved for the microstates version of free entropy (see corollary $7. 5$ in \cite{vcartan})} In the one dimensional case, a free Poincar\'e inequality in \textit{the sense of Voiculescu} means that for any compactly supported probability measure $\mu$, there exists a constant $C$ (depending only on $\mu$ and known to be bounded from above by $2\rho^2(\mu)$ where $\rho(\mu)=\sup\left\{\lvert z\rvert, z\in supp(\mu)\right\}$ in full generality and without any restriction on $\mu$) such that for all test functions $f$:
\begin{equation}\label{FPIn}
    Var_{\mu}(f)\leq C\iint\bigg(\frac{f(x)-f(y)}{x-y}\bigg)^2d\mu(x)d\mu(y)
\end{equation}
In fact, the important work of Ledoux and Popescu \cite{LP} illuminates a deep connection between large deviations of random matrices and free functional inequalities, where in this paper the authors prove another type of free Poincaré inequality by a mass transport argument. In fact, this new inequality can be generalised by a new variance estimate involving higher order non-commutative derivatives, see the important work of Houdré and Popescu \cite{Hou}.
\bigbreak
In the classical case, following an idea of Saumard \cite{Saumard} in dimension one, Fathi \cite{mm} proved that in every dimension $d\geq 1$, moment measures with sufficiently smooth moment maps satisfy a \textit{Weighted Poincaré Inequality} with an explicit weight given by the \textit{Moment Stein Kernel} (but only for this particular construction of the Stein Kernel). In the free case, thanks to the free Brascamp-Lieb inequality, we are able to obtain a one-dimensional analogue involving the free moment Stein kernel.
\bigbreak
However, since we have not been able to find a proof of the free Brascamp-Lieb inequality (in the sense of Biane), we propose a simple proof based on the calculations found in the paper by Houdré and Popescu \cite{Hou}, and the same kind of arguments for the proof of the Brascamp-Lieb inequality found by Helffer \cite{Helff} in the classical case.
\begin{proposition}(Free Brascamp-Lieb Inequality)
    Let $V$ be a $\mathcal{C}^2$ and strictly convex potential, and consider its associated free Gibbs measure $\nu_V$, then for smooth functions $f$:
    \begin{equation}
            Var_{\nu_V}(f)\leq \iint(\mathscr{J}\mathscr{D}V)^{-1} (\mathscr{J}f)^2\:d\nu_V^{\otimes 2}
    \end{equation}
    
\end{proposition}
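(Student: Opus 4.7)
The plan is to imitate Helffer's classical proof of the Brascamp--Lieb inequality, replacing the Witten Laplacian by the free Langevin generator $M_V=\mathscr{J}_V^{*}\mathscr{J}$ (whose invariant measure is $\nu_V$, cf.\ the discussion following Definition~\ref{def7}) and the classical commutator $[\nabla,L_V]=-\Hess V\cdot\nabla$ by its free analogue. I would set $\tilde f:=f-\tau_V(f)$; strict convexity of $V$ together with the free Poincar\'e inequality \eqref{FPIn} implies that $M_V$ is boundedly invertible on the mean-zero subspace of $L^2(\nu_V)$, so one can produce a sufficiently regular $g$ with $M_Vg=\tilde f$. Combined with the factorisation $M_V=\mathscr{J}_V^{*}\mathscr{J}$, this yields the ``Helffer identity''
\[
\mathrm{Var}_{\nu_V}(f)=\langle M_Vg,f\rangle_{\nu_V}=\langle \mathscr{J}g,\mathscr{J}f\rangle_{L^2(\nu_V^{\otimes 2})}.
\]

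The second ingredient is the non-commutative Bochner/commutator formula
\[
\mathscr{J}M_V=(M_V^{\otimes 2}+\mathscr{J}\mathscr{D}V)\,\mathscr{J},
\]
which is the positive-generator rewriting of the identity $\mathscr{J}\mathscr{L}_{\varphi}=(\mathscr{L}_{\varphi}^{\otimes 2}-\mathscr{J}\mathscr{D}\varphi)\mathscr{J}$ already used in Section~\ref{sect4} (checked in the OU case $V=\tfrac{x^2}{2}$ via Tchebychev expansions, as in Proposition~7 of Houdr\'e--Popescu~\cite{Hou}, where $\mathscr{J}\mathscr{D}V\equiv 1$). For general smooth strictly convex $V$ it should follow from a direct computation using the Schwinger--Dyson equation characterising $\nu_V$ and the chain rule for $\mathscr{J}$. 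Applying it to $g$ and using $M_Vg=\tilde f$ would give $\mathscr{J}f=M_V^{\otimes 2}\mathscr{J}g+\mathscr{J}\mathscr{D}V\cdot\mathscr{J}g$.

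Substituting this back and using positivity of $M_V^{\otimes 2}=M_V\otimes\Id+\Id\otimes M_V$ on $L^2(\nu_V^{\otimes 2})$,
\[
\mathrm{Var}_{\nu_V}(f)=\langle \mathscr{J}g,M_V^{\otimes 2}\mathscr{J}g\rangle+\iint \mathscr{J}\mathscr{D}V\cdot(\mathscr{J}g)^2\,d\nu_V^{\otimes 2}\;\geq\;\iint \mathscr{J}\mathscr{D}V\cdot(\mathscr{J}g)^2\,d\nu_V^{\otimes 2}.
\]
A Cauchy--Schwarz step with the strictly positive weight $\mathscr{J}\mathscr{D}V$ then gives
\[
\mathrm{Var}_{\nu_V}(f)=\langle \mathscr{J}g,\mathscr{J}f\rangle\leq\Bigl(\iint \mathscr{J}\mathscr{D}V(\mathscr{J}g)^2 d\nu_V^{\otimes 2}\Bigr)^{1/2}\Bigl(\iint (\mathscr{J}\mathscr{D}V)^{-1}(\mathscr{J}f)^2 d\nu_V^{\otimes 2}\Bigr)^{1/2},
\]
and combining with the previous bound and dividing by $\sqrt{\mathrm{Var}_{\nu_V}(f)}$ concludes.

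The main obstacle is the functional-analytic justification of the Bochner formula for a general strictly convex $\mathcal{C}^2$ potential $V$ (in the OU case the Tchebychev chaos provides a direct proof, but here there is no such spectral decomposition, as recalled via Anshelevich~\cite{anshe}): one should first establish it on a core of polynomials or bounded-Lipschitz test functions by differentiating the Schwinger--Dyson identity, then extend by density to $g\in\mathrm{Dom}(M_V)$ with $\mathscr{J}g\in L^2(\mathscr{J}\mathscr{D}V\,d\nu_V^{\otimes 2})$. One must also verify that finiteness of the right-hand side of the claimed inequality is the only hypothesis needed for the Cauchy--Schwarz estimate to apply, so that the bound extends to the natural class of test functions. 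Once the commutator formula is in place, the remaining steps are either standard spectral theory for $M_V$ or purely algebraic.
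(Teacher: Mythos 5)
Your proposal follows essentially the same route as the paper's proof: the same generator $M_V=\mathscr{J}_V^{*}\mathscr{J}$ with $\langle M_Vf,f\rangle_{L^2(\nu_V)}=\iint(\mathscr{J}f)^2d\nu_V^{\otimes 2}$, the same Helffer-type variance representation through the pseudo-inverse on mean-zero functions, and the same non-commutative Bochner/commutation formula $\mathscr{J}M_V=(M_V^{\otimes 2}+\mathscr{J}\mathscr{D}V)\mathscr{J}$ as the key lemma. The only difference is cosmetic and lies in the last step: the paper concludes via the operator inequality $(M_V^{\otimes 2}+\mathscr{J}\mathscr{D}V)^{-1}\leq(\mathscr{J}\mathscr{D}V)^{-1}$, whereas you discard the nonnegative $M_V^{\otimes 2}$ term and finish with a weighted Cauchy--Schwarz, two equivalent standard endings of Helffer's argument.
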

\begin{proof}
    Set $M_V:=-2\partial_x(Id\otimes \nu_V)\mathscr{J}+V'\mathscr{D}$, which is the infinitesimal generator of free Langevin diffusion:
    \begin{equation}
    dX_t=-\mathscr{D}V(X_t)+\sqrt{2}dS_t,\nonumber
    \end{equation} 
    and recall as before that for a sufficiently smooth function, $M_V=\mathscr{J}^*_V\mathscr{J}$ where again $\mathscr{J}_V^*$ denotes the adjoint of $\mathscr{J}$ with respect to the inner product in $L^2(\nu_V)$.
    Then, by proposition $10$ in \cite{Hou}, we have for $f$ of class $\mathcal{C}^2$,
    \begin{equation}
        \langle M_V f,f\rangle_{L^2(\nu_V)}=\iint\bigg(\frac{f(x)-f(y)}{x-y}\bigg)^2d\nu_V(x)d\nu_V(y)\nonumber
    \end{equation}
    In particular, since the right-hand side is a Dirichlet form, which is positive and closable, its generator $M_V$ must be essentially self-adjoint and positive.
    \begin{flushleft}
    We then assume, without loss of generality, that $f\in L^2_0(\nu_V):=\left\{f\in L^2(\nu_V),\: \int fd\nu_V=0\right\}$, considering then the pseudo-inverse $M_V^{-1}$ which is well defined and for which we have $f=M_VM_V^{-1}f$.
     \end{flushleft}
    \begin{flushleft}
        It is not difficult to see that
    \begin{equation}
        Var_{\nu_V}(f)=\iint \mathscr{J}M_V^{-1}f\cdot \mathscr{J}f\:d\nu_V^{\otimes 2}\nonumber
    \end{equation}
    \end{flushleft}
    
   Now consider the linear extension of the operator $M_V^{\otimes 2}:=M_V\otimes id+id\otimes M_V$, which acts on a simple function with values in $\mathbb{R}^2$ as follows
   \begin{equation}
M_V^{\otimes 2}(f\otimes g)=M_Vf\otimes g+ f\otimes M_Vg,\nonumber
   \end{equation}
   where, as usual, $(f\otimes g)(x,y):=f(x)g(y)$.
   \newline
  It is then not difficult to show that this operator $M_V^{\otimes 2}$ is essentially self-adjoint and positive. 
\begin{flushleft}
\bigbreak
  Then we can check that,
   \begin{equation}
       \mathscr{J}M_V=(M_V^{\otimes 2}+\mathscr{J}\mathscr{D}V)\mathscr{J},\nonumber
   \end{equation}
   which follows from
   \begin{equation}
       \mathscr{J}(-2\partial_x(Id\otimes \nu_V)\mathscr{J})=M_V^{\otimes 2},\nonumber
   \end{equation}
\end{flushleft}
Hence, we have: \begin{equation}
       (M_V^{\otimes 2}+\mathscr{J}\mathscr{D}V)^{-1}\mathscr{J}=\mathscr{J}M_V^{-1}\nonumber
   \end{equation}
And finally since $M_V^{\otimes 2}$ is non-negative, we have that $(M_V^{\otimes 2}+\mathscr{J}\mathscr{D}V)^{-1}\leq (\mathscr{J}\mathscr{D}V)^{-1}$, which concludes the proof.
\end{proof}
\qed
\begin{flushleft}
The following lemma refines a particular application of the free Cafarelli-Klartag contraction theorem \ref{cafarelli}, namely the stability of the free Poincar\'e inequality by Lipschtz map. Indeed, for example, it is easy to see that if the (smooth) transport map from the semicircular to a free Gibbs measure $\nu_u$ is associated with $u$ a $1$-uniformly convex potential, then $\nu_u$ satisfies a free Poincar\'e inequality with a free Poincar\'e constant less than $1$, see also Leodux and Popescu, Proposition $2$ in \cite{LP}.
\end{flushleft}
\begin{proposition}\label{prop8}
    Let $\mu$ a centered compactly supported probability measure, with a density w.r.t the Lebesgue measure and with a sufficiently  smooth  moment map $\varphi$ (at least of class $\mathcal{C}^2)$. Consider its moment free Stein kernel 
    $A=\mathscr{J}\mathscr{D}\varphi(\mathscr{D}\varphi^*)$. We then have the following free weighted Poincare inequality valid for smooth functions $f$:
    \end{proposition}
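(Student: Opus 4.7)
The plan is to reduce the claim to the free Brascamp--Lieb inequality just established, by using $\varphi'$ as a change of variable between $\nu_\varphi$ and $\mu$. Since $\mu = (\varphi')_\sharp \nu_\varphi$, for any smooth test function $f$ one has $\mathrm{Var}_\mu(f) = \mathrm{Var}_{\nu_\varphi}(f\circ \varphi')$, so it suffices to bound the right-hand side and then push the estimate forward.

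First I would observe that, since $\varphi$ is $\mathcal{C}^2$ and convex and since $\nu_\varphi$ is the free Gibbs law associated with the potential $\varphi$, the free Brascamp--Lieb inequality proved above applies with $V = \varphi$, giving
\begin{equation}
\mathrm{Var}_{\nu_\varphi}(g) \leq \iint (\mathscr{J}\mathscr{D}\varphi)^{-1}(\mathscr{J}g)^2\, d\nu_\varphi^{\otimes 2}.\nonumber
\end{equation}
Substituting $g = f\circ \varphi'$ and applying the chain rule for the free difference quotient,
\begin{equation}
\mathscr{J}(f\circ \varphi')(x,y) = \mathscr{J}\mathscr{D}\varphi(x,y)\cdot \mathscr{J}f(\varphi'(x),\varphi'(y)),\nonumber
\end{equation}
the factor $(\mathscr{J}\mathscr{D}\varphi)^{-1}$ collapses one power of $\mathscr{J}\mathscr{D}\varphi$, leaving the integrand
\begin{equation}
\mathscr{J}\mathscr{D}\varphi(x,y)\cdot \bigl(\mathscr{J}f(\varphi'(x),\varphi'(y))\bigr)^2.\nonumber
\end{equation}

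Next I would perform the change of variables $u = \varphi'(x)$, $v = \varphi'(y)$, which sends $\nu_\varphi^{\otimes 2}$ to $\mu^{\otimes 2}$, and rewrite $\mathscr{J}\mathscr{D}\varphi(x,y) = \mathscr{J}\mathscr{D}\varphi(\mathscr{D}\varphi^*(u),\mathscr{D}\varphi^*(v)) = A(u,v)$, using that $\mathscr{D}\varphi^*$ is the inverse of $\varphi'$. This directly produces
\begin{equation}
\mathrm{Var}_\mu(f) \leq \iint A(u,v)\bigl(\mathscr{J}f(u,v)\bigr)^2\, d\mu(u)\,d\mu(v),\nonumber
\end{equation}
which is the desired free weighted Poincaré inequality.

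The main technical obstacle will be justifying the strict positivity of $\mathscr{J}\mathscr{D}\varphi$ on $\supp(\nu_\varphi)$, which is what makes $(\mathscr{J}\mathscr{D}\varphi)^{-1}$ integrable against $\nu_\varphi^{\otimes 2}$ and $\mathscr{D}\varphi^*$ a genuine inverse of $\varphi'$. In the present $\mathcal{C}^2$ regime this follows from the free Monge--Ampère relation of Section~\ref{sect4} together with the fact that $\mu$ has a density with respect to Lebesgue measure; if needed, one could run the whole argument for the regularised potential $\varphi_\epsilon := \varphi + \tfrac{\epsilon}{2}x^2$ (which is automatically uniformly convex, hence strictly admissible for free Brascamp--Lieb) and then let $\epsilon \to 0$ using dominated convergence on both sides. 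Everything else is a formal change of variables and the chain rule.
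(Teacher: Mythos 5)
Your proposal is correct and follows essentially the same route as the paper: rewrite $\mathrm{Var}_\mu(f)=\mathrm{Var}_{\nu_\varphi}(f\circ\varphi')$, apply the free Brascamp--Lieb inequality with potential $\varphi$, use the chain rule for the free difference quotient to cancel one factor of $(\mathscr{J}\mathscr{D}\varphi)^{-1}$, and push forward by $\varphi'$ to obtain the weight $A=\mathscr{J}\mathscr{D}\varphi(\mathscr{D}\varphi^*)$ against $\mu^{\otimes 2}$. Your additional remarks on the strict convexity needed to apply Brascamp--Lieb (and the $\epsilon$-regularisation fallback) are a sensible supplement to the paper's argument, which leaves this implicit in its standing assumptions.
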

    \begin{eqnarray}
        Var_{\mu}(f)\leq \iint A \cdot (\mathscr{J}f)^2\:d\mu(x)d\mu(y),
    \end{eqnarray}
\begin{proof}
    Let's denote $\varphi$ as the free moment map of $\mu$, and $A=\mathscr{J}\mathscr{D}\varphi(\mathscr{D}\varphi^*)$ its free moment Stein kernel. Then, by using the chain rule for the non-commutative derivative, we get:
    \begin{eqnarray}
        Var_{\mu}(f)&=&Var_{\nu_{\varphi}}(f\circ \varphi')\nonumber\\
       & \leq & \iint (\mathscr{J}\mathscr{D}{\varphi})^{-1}\cdot \mathscr{J}\mathscr{D}\varphi\cdot \mathscr{J}f(\varphi')\cdot \mathscr{J}\mathscr{D}{\varphi}\cdot \mathscr{J}f(\varphi')d\nu_{\varphi}^{\otimes ^2}
       \nonumber\\
       &=& \iint \mathscr{J}\mathscr{D}{\varphi}\cdot (\mathscr{J}f(\varphi'))^2 d\nu_{\varphi}^{\otimes 2}\nonumber\\
       &=& \iint A\cdot (\mathscr{J}f)^2d\mu^{\otimes 2}\nonumber
    \end{eqnarray}
 \qed
\end{proof}
\begin{flushleft}
We now return to the probabilistic interpretation of {\it free moment Stein kernel} and the result \ref{prop8} as a spectral gap for the generator of a free diffusion process. We also continue to denote $(S_t)_{t\geq 0}$ as a free Brownian motion and $(\mathcal{A}_t)_{t\geq 0}$ as its corresponding filtration. We also assume for our exposition that $\varphi$ is of class $\mathcal{C}^2$ and still strictly convex.
\begin{flushleft}
     Indeed, another nice feature of this construction is the positivity and symmetry of these free Stein kernels, i.e. 
$\forall (x,y)\in \mathbb{R}^2, \mathscr{J}\mathscr{D}u(\mathscr{D}u^*(x),\mathscr{D}u^*(y))\geq 0$, from the convexity assumption on $u$, and the symmetry: $\mathscr{J}\mathscr{D}u(\mathscr{D}u^*(x),\mathscr{D}u^*(y))=\mathscr{J}\mathscr{D}u(\mathscr{D}u^*(y),\mathscr{D}u^*(x))$.
     In fact, using this argument, Fathi and Mikulincer \cite{FM} have translated the construction of {\it moment Stein kernels} of the form $A=\Hess\varphi(\nabla \varphi^*)$ into a probabilistic way.
    \bigbreak
That is, under some mild assumptions (e.g. this Stein kernel, which by construction is always symmetric and positive semidefinite, i.e. in $S_d^{+}(\mathbb{R})$, should actually be inside the positive cone and uniformly bounded from below), it is possible to construct a diffusion process with $\mu$ that has its {\it unique invariant measure}, which at first sight might be very useful to do sampling from $\mu$. Specifically, if $(B_t)_{t\geq 0}$ is a standard Brownian motion on some filtered probability space, then the diffusion process is $(X_t)_{t\geq 0}$:
\begin{equation}\label{class}
    dX_t=-X_tdt+\sqrt{2\Hess\varphi(\nabla \varphi)^*}(X_t)dB_t,
\end{equation}
has $\mu$ as a unique invariant measure (uniqueness is then ensured by a standard criterion on the lower bound of this diffusion coefficient). However, as noted by Fathi and Mikulincer \cite{FM}, the \textit{moment Stein kernels} are generally not globally Lipschitz, but generally belong to some Sobolev space (and are almost never explicitly tractable). Using these moment Stein kernels, a variant of the Crippa and De Lellis \cite{CripaD} techniques for studying transport equations, and a proxy condition derived from the crucial Lusin-Lipschitz property, they obtained a new generalisation of the Ledoux, Nourdin, Peccati estimates \cite{LNP} relating transport distances and Stein discrepancies in a non-Gaussian setting, that is for invariant measures $\mu$ of diffusions that are well-conditioned log-concave measures.
\end{flushleft}
\begin{definition}
Let $(X_t^{X_0})_{t\geq 0}$ be the free stochastic process starting from arbitrary (bounded) initial data $X_0\in \mathcal{M}_{s.a}$ with law $\mu$ assumed to be free of free Brownian motion $(S_t)_{t\geq 0}$, and a weak solution of the following free SDE given by Picard iteration if it exists:
\begin{equation}\label{freesde}
dX_t=-X_tdt+(2\mathscr{J}\mathscr{D}\varphi(\mathscr{D}\varphi^*(X_t)))^{\frac{1}{2}}\sharp dS_t,
    \end{equation}
    \end{definition}
    \begin{remark}
This free process is actually a modification of the free Ornstein-Uhlenbeck process:
\begin{equation}
    dX_t=-X_tdt+\sqrt{2}dS_t,
\end{equation}
which has $\mathcal{S}(0,1)$ invariant measure, and for which we recall that we have exponential fast convergence to equilibrium for the relative entropy via the free Log-Sobolev inequality (see Biane \cite{BLS}), and even for the free quadratic Wasserstein metric thanks to the free Talagrand inequality proved by Biane and Voiculescu \cite{BV} (\it{Otto-Villani}-type estimates).
\newline
Here the usual conventions of Langevin diffusion are reversed: all the information is carried in the diffusion coefficient and not in the drift.
\end{remark}
\begin{flushleft}
First we calculate the infinitesimal generator $\mathscr{L}$ of this free SDE \eqref{freesde}. Thanks to the free Ito formula of Biane and Speicher (see also the trace Ito formula of Nikitopolous): Theorem 3.5.3, and remark 3.5.4 in \cite{ENiki}) 
we can easily deduce that the generator is self-adjoint and, for all $\mathcal{C}^2$-bounded functions $f$, is given by
\begin{eqnarray}\label{gener}
    \mathscr{L}f(x)&=&\Delta_{ \mathscr{J}\mathscr{D}u(\mathscr{D}u^*(X_0))}f(x)-xf'(x),\nonumber
\end{eqnarray}
where we denote $\Delta_{ \mathscr{J}\mathscr{D}u(\mathscr{D}u^*(X_0))}$ as the correction term coming from the free Ito formula (it is really important to note here that the generator depends on the law of the initial condition).
\newline
Then we can note that if the free SDE \eqref{freesde} is well defined, then in this case $\mathbb{E}_{\mu}(\mathscr{L}f)=0$ for all $\mathcal{C}^2$ bounded functions (where $\mathbb{E}_{\mu}$ denotes the expectation taken under $\mu$). In other words, $\mu$ is the invariant measure of the free SDE \eqref{freesde}:
\begin{eqnarray}
    \int \mathscr{L}f(x)d\mu(x)=0,\nonumber
\end{eqnarray}
In fact, it is easy to see that this is a self-adjoint operator and that
\begin{eqnarray}
       \int \mathscr{L}f(x)d\mu(x)
    &=&\iint \frac{f'(x)-f'(y)}{x-y} \mathscr{J}\mathscr{D}u(\mathscr{D}u^*(x),\mathscr{D}u^*(y))d\mu(x)d\mu(y)-\int xf'(x)d\mu(x)\nonumber\\
    &=&0.\nonumber
\end{eqnarray}
since it is exactly the Stein equation in its weak formulation that is satisfied by the measure $\mu$ (we assume here that the test functions are gradients, i.e. $f\rightarrow \mathscr{D}f:=f'$).
\end{flushleft}
From now on, the above statement \ref{prop8} can be more conveniently reformulated by saying that the following Dirichlet form has a spectral gap $1$.
\newline
Indeed, for a function $f\in \Dom(\mathscr{L})$, we define the Dirichlet form:
\begin{equation*}
    f\mapsto \mu(f\mathscr{L}f)=\mu\otimes \mu(A\cdot (\mathscr{J}f)^2),
\end{equation*}
The connection with the weighted free Poincaré inequalities can be made more explicit from the above statement by saying that
\begin{equation}
    Var_{\mu}(f)\leq \mu\otimes \mu(A\cdot (\mathscr{J}f)^2),\nonumber
\end{equation} 
In particular, if $\varphi$ is smooth and $\rho$-uniformly convex for some $\rho>0$, then:
\begin{equation}
    Var_{\mu}(f)\leq {\rho}\iint \bigg(\frac{f(x)-f(y)}{x-y}\bigg)^2d\mu(x)d\mu(y).
\end{equation}
\end{flushleft}
\section{A first step towards a multidimensional extension}
In this part we'll explain how our previous construction of free moment Stein kernels can be extended to a multidimensional setting, albeit with strong restrictions since we can only deal with free moment measures that are "close" to the semicircular law. In this case, our results can be interpreted as a first step towards studying regularity estimates on free Monge-Ampère equations and their links to free diffusion operators (and towards understanding the non-commutative Hessian structure associated with free Gibbs laws that are sufficiently close to the semicircular law).
We will first introduce the main notation, which is slightly different now that we are in a strongly non-commutative setting.
\begin{flushleft}
  We first recall what the space of the test function (the free algebra) will be, and how the Banach algebra of non-commutative power series with radius of convergence $R>0$ is defined.  
\end{flushleft}
\begin{definition}
    Let $t_1,\ldots,t_n$ be non-commuting self-adjoint variables. Then we denote by $\mathscr{P}:=\mathbb{C}\langle t_1,\ldots,t_n\rangle$ the algebra of non-commutative polynomials (also called free algebra) in these variables. We also set $\mathscr{P}_0 \subset \mathscr{P}$ as the linear span of polynomials with zero constant term.
\end{definition}
In this space we consider the family of norms $\lVert \cdot\rVert_R$ with $R>0$, defined as follows. For a monomial $q$ and arbitrary $P\in\mathscr{P}$, let $\lambda_q(P)$ denote the coefficient of $q$ in the decomposition of $P$; thus we have
$P=\sum_q\lambda_q(P)q$. Then we set
\begin{equation} 
    \lVert P\rVert_R=\sum_{q:\deg(q)\geq 0}\lvert \lambda_q(P)\rvert R^{\deg(q)},
\end{equation}
we will denote by $\mathscr{P}^{(R)}$ the completion of $\mathscr{P}$ with respect to this norm. This is a Banach algebra, which can be viewed as the algebra of non-commutative power series with a convergence radius of at least $R>0$.
\begin{flushleft}
We now introduce the well-known concept of tracial non-commutative distributions, which will make the notation easier in the sequel.
\begin{definition}
    Let $X=(X_1,\ldots,X_n)$ in a $W^*$ tracial probability space $(\mathcal{M},\tau)$, then by definition the non-commutative distribution (or law) of $X$, denoted $\mu_X$, is the linear form
    \begin{eqnarray}
        \mu_X:&\mathscr{P}&\rightarrow \mathbb{C}\nonumber\\
        &P&\mapsto \tau(P(X)).
    \end{eqnarray}
\end{definition}
\end{flushleft}
\begin{remark}
    In fact, we can easily check that 
    \begin{enumerate}
        \item $\mu_X(1)=1$ (unital),
        \item $\mu_X(P^*P)\geq 0$ (positive),
        \item $\mu_X(PQ)=\mu_X(QP)$ (tracial),
        \item $\mu_X(X_{i_1}\ldots X_{i_n})<R^n$ for any choice of $i_1,\ldots,i_n\in \left\{1,\ldots,n\right\}$ and where $R$ is any positive number such that $R>\underset{i=1,\ldots,n}{\sup}\lVert X_i\rVert$ (exponentially bounded).
    \end{enumerate}
   This motivate to define more generally the notion of non-commutative distribution which is the following.
\end{remark}
\begin{definition}
   A tracial non-commutative distribution is a trace on the $C^*$-universal free product $C([-R,R])^{*n}$ for some $R>0$. This space is denoted $\Sigma_{m,R}$ and is endowed with the weak $*$-topology, i.e. the topology of pointwise convergence on $\mathscr{P}$. Equivalently, an element of $\Sigma_{m,R}$ is a unital, positive, tracial and exponentially bounded map.
\end{definition}
Let's also recall Voiculescu's definitions of free difference calculus, which are the cyclic gradients and the free difference quotients. In the following definitions we first define these operators on the space of non-commutative polynomials, and any tensor product that appears must be understood as the algebraic tensor product (i.e. without completion).
\begin{definition}[Voiculescu, \cite{Voic1}]
We define the cyclic derivative on monomials $m\in \mathscr{P}$ as
\begin{equation}
    Dm=(D_1m,\ldots ,D_nm),\nonumber
\end{equation}
where
\begin{equation}
D_jm=\sum_{m=at_{j}b}ba,\nonumber
\end{equation}
and then extended it linearly to $\mathscr{P}$.
\end{definition}
\begin{definition}(Voiculescu section 3 in \cite{V})
The $j$-free difference quotient is defined as:
\begin{equation}
    \partial_jm=\sum_{m=at_jb}a\otimes b^{op},\nonumber
\end{equation}
and then linearly extended to $\mathscr{P}$.
\end{definition}
\begin{flushleft}
    In fact, we can relate the free difference quotients and the cyclic derivatives as follows
\end{flushleft}

\begin{equation}
    D_j=m\circ(\partial_j)^{\sigma},
\end{equation}
where we denote $^{\sigma}$ as the flip homomorphism, i.e. $Q=a\otimes b\in \mathscr{P}\otimes \mathscr{P}^{op}$, $(Q)^{\sigma}=b\otimes a$. This operator is then extended to include linearity.
\begin{definition}\label{2}
For $P=(p_1,\ldots,p_n)$ we define the non-commutative Jacobian as :
\begin{equation*}
\mathscr{J}P = 
\begin{pmatrix}
\partial_1 p_1 & \partial_2 p_1 & \cdots & \partial_n p_n\\
\vdots  & \vdots  & \ddots & \vdots  \\
\partial_1 p_n & \partial_2 p_n & \cdots & \partial_n p_n
\end{pmatrix} \in M_n(\mathscr{P}\otimes \mathscr{P}^{op}),\nonumber
\end{equation*}
\end{definition}
Fortunately, the non-commutative Jacobian enjoys a fundamental chain rule property (up to natural operations).
\begin{flushleft}
    We also introduce the \textit{Number} operator $\mathscr{N}$ and the symmetrization operator $\mathscr{S}$, which are defined on $\mathscr{P}$ by
    \begin{equation}
        \mathscr{N}(t_{i_1}\ldots t_{i_p})=pt_{i_1}\ldots t_{i_p}
    \end{equation}
    i.e. $\mathscr{N}P=\sum_{i=1}^p\partial_jP\sharp t_j$, this is the linear operator that multiplies a monomial of degree $p$ by $p$.
    \bigbreak
    The symmetrization operator $S$ is defined on monomials in $\mathscr{P}_0$ as 
    \begin{equation}
        \mathscr{S}t_{i_1}\ldots t_{i_p}=\frac{1}{p}
\sum_{r=1}^p t_{i_{r+1}}\ldots t_{i_p}t_{i_1}\ldots t_{i_r}.  \end{equation}
\end{flushleft}
\begin{flushleft}
    Using corollary $3.3$ in \cite{NZi}, it can be shown that the operator $\mathscr{J}\mathscr{D}$ extends to $\mathscr{P}^{(R)}$ for any $R>0$. Here $\mathscr{P}\hat\otimes_R \mathscr{P}^{op}$ is the completion of the algebraic tensor product $\mathscr{P}^{(R)}\otimes (\mathscr{P}^{(R)})^{op}$ with respect to the projective tensor norm (e.g. \cite[Section 3.1]{NZi} for further details).
\end{flushleft}
\bigbreak
\begin{flushleft}
    We now introduce the notion of the free Gibbs law in the multivariate setting, which is similar to the univariate one, except that here, since there are different notions of entropy, we'll focus on the microstates one, which we'll recall in what follows.
    \end{flushleft}
    \bigbreak
    \begin{definition}(Voiculescu, Microstates free entropy \cite{Ventro})
    \newline
    Let $X=(x_1,\ldots,x_n)\in (\mathcal{M},\tau)^n$ be a $W^*$ tracial probability space. For each $k\in \mathbb{N}$, let $M^{sa}_k$ denote the set of $k\times k$ self-adjoint matrices equipped with its non-normalised trace $\Tr$. For $l\in \mathbb{N}$ and $\varepsilon>0$, let 
    \begin{equation}
        \Gamma(X;k,l,\varepsilon):=\left\{Y\in M^{sa}_k,\bigg\lvert \frac{1}{n}Tr(P(Y))-\tau(P(X))\bigg\rvert<\varepsilon,\:\forall P\in \mathscr{P}\:\mbox{with\:} \deg(P)\leq l\right\}.
    \end{equation}
    Then the free entropy of the microstates of $X$ is defined as follows
    \begin{equation}
        \chi(X):=\underset{l,\varepsilon}{\inf}\:\underset{k\rightarrow \infty}{\limsup}\frac{1}{k^2}\log(\Vol(\Gamma(X;k,l,\varepsilon)))+\frac{n}{2}\log k.
    \end{equation}
    Note that we also use a shortcut: $\chi(\tau_X)=\chi(X)$. 
    \begin{flushleft}
        More generally, any non-commutative distribution $\tau$ with exponential bound $R>0$ can be realised as the non-commutative distribution of some $n$-tuple of self-adjoint operators $X=(X_1,\ldots,X_n)$ with $\lVert X\rVert_{\infty}:=\underset{i=1,\ldots, n}{\max} \lVert X_i\rVert_{\infty}\leq R$ on a $W^*$ tracial probability space. This is a version of the Gelfand-Naimark-Segal construction (see Proposition $5.2.14$ in \cite{AGZ}).
    \end{flushleft}
    
    \end{definition}
    This leads to the following definition, which is well known and was introduced by Voiculescu in \cite{VF} and Guionnet and Shlyakhtenko in \cite{GS}. 
    \begin{definition}
        The free Gibbs law $\tau_U$ associated with the potential $U$ is the minimiser of $-\chi(\tau_U)+\tau(U(X_1,\ldots,X_n)$ if it exists.
    \end{definition}
We are now in a position to give a proper definition of free moment laws in a multivariate setting.
\begin{definition}
    The law of a non-commutative tuple of random variables $X=(X_1,\ldots,X_n)$ is called a free moment law if there exists a self-adjoint non-commutative power series $U$ such that the law $\tau_U$ is well defined, and the law of non-commutative random variables $Y=(Y_1,\ldots,Y_n)$ such that
    \begin{equation}
        X=(X_1,\ldots,X_n)=(\mathscr{D}U)(Y_1,\ldots,Y_n),
    \end{equation}
    that is, for all $j=1,\ldots,n$,
    \begin{equation}
        X_j=\mathscr{D}_jU(X_1,\ldots,X_n).
    \end{equation}
\end{definition}
\begin{flushleft}
    Then the definition of the free Stein kernels in the multivariate setting takes the following form
\end{flushleft}

\begin{definition}
A free Stein kernel for an $n$-tuple $X\in (\mathcal{M},\tau)^n$ with respect to a potential $V\in \mathscr{P}$ is an element of $L^2(M_n(\mathcal{M}\bar{\otimes} \mathcal{M}^{op}), (\tau \otimes \tau^{op}) \circ Tr)$ such that for any $P \in \mathscr{P} ^n$:
\begin{equation}
    \langle [\mathscr{D}V](X),P(X)\rangle_{\tau}=\langle A,[\mathscr{J}P](X)\rangle_{\tau \otimes \tau^{op}},
\end{equation}
\end{definition}
\begin{flushleft}
\end{flushleft}
\begin{equation}
\Sigma ^*(X|V)=\inf_{A}\lVert A-(1\otimes 1^{op})\otimes I_n\rVert_{L^2(M_n(\mathcal{M}\bar{\otimes} \mathcal{M}^{op}),(\tau \otimes \tau^{op}) \circ Tr)},
\end{equation}
where the infinimum is taken over all admissible Stein kernels $A$ of $X$ relative to $V$.
\bigbreak
\begin{flushleft}
    We also need to introduce an appropriate notion of convexity, which was already precisely defined in the work of Guionnet and Shlyakhtenko \cite{GS} and was also used by Fathi and Nelson \cite{FN} to prove a microstates variant of the free Log-Sobolev inequality.
\end{flushleft}
\begin{definition}
    We say that $f\in \mathscr{P}^{(R)},\: R>0$ is convex if and only if $\mathscr{J}\mathscr{D}f=Q^*Q$ for some $Q\in M_n(\mathscr{P}^{(R)}\hat{\otimes} \mathscr{P}^{(R)})$. In shorthand we will refer to this as $\mathscr{J}\mathscr{D}f\geq 0$.
\end{definition}
\begin{flushleft}
    The following theorem is stated in a more general setting, since we don't assume that the target-free Gibbs measure is necessarily "close" to the semicircular law, \end{flushleft}
\begin{theorem}\label{multivstein}
    Let $X=(X_1,\ldots,X_n)$ be a free moment law associated with a non-commutative power series $U$ in $\mathscr{P}^{(R)}$ for some $R>0$, i.e, such that there exists $Y=(Y_1,\ldots,Y_n)$ of the law $\tau_U$ which is well defined and such that for all $j=1,\ldots,n$, $X_j=\mathscr{D}_jU(Y_1,\ldots,Y_n)$.
    \begin{flushleft}
        Now suppose $\mathscr{D}U=(\mathscr{D}_1U,\ldots, \mathscr{D}_nU)\in (\mathscr{P}^{(R)})^n$ has a compositional inverse $G\in (\mathscr{P}^{(\lVert \mathscr{D}U\rVert_R)})^n$ with $\mathscr{J}\mathscr{D}U\geq 0$, then
    $A=[\mathscr{J}\mathscr{D}U](\mathscr{D}G(X))$ is a free Stein kernel for $X$ with respect to the standard semicircular potential.
   $V_1=\frac{1}{2}\sum_{i=1}^n X_i^2$ which is also positive in $M_n(W^*(X)\bar{\otimes}W^*(X))$.\end{flushleft}
    
\end{theorem}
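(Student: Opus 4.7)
The plan is to lift the one-dimensional argument of Theorem \ref{mstein} to the multivariate setting: exploit the Schwinger--Dyson equation characterising the free Gibbs law $\tau_U$ to integrate by parts, then change variables via $Y=G(X)$ to rewrite the identity entirely in terms of $X$ and its test polynomials, and finally read off both the Stein kernel and its positivity.

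Step 1. Because $Y=(Y_1,\ldots,Y_n)$ realises $\tau_U$, the multivariate Schwinger--Dyson equation of Guionnet--Shlyakhtenko \cite{GS} (see also Fathi--Nelson \cite{FN}) asserts that for every test tuple $P=(p_1,\ldots,p_n)\in\mathscr{P}^n$,
$$\langle \mathscr{D}U(Y),P(Y)\rangle_{\tau}=\langle (1\otimes 1^{op})\otimes I_n,[\mathscr{J}P](Y)\rangle_{\tau\otimes\tau^{op}}.$$
Given an arbitrary $Q\in\mathscr{P}^n$, I substitute $P:=Q\circ \mathscr{D}U$, which lies in $(\mathscr{P}^{(R)})^n$ thanks to the composition rules on the Banach algebras $\mathscr{P}^{(R)}$ and the polynomiality of $Q$. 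Then $P(Y)=Q(\mathscr{D}U(Y))=Q(X)$, and the non-commutative chain rule for the Jacobian produces
$$[\mathscr{J}P](Y)=[\mathscr{J}\mathscr{D}U](Y)\,\sharp\,[\mathscr{J}Q](X),$$
with $\sharp$ denoting the natural matrix product accompanied by bimodular contraction on the middle tensor legs.

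Step 2. Plugging these two identities back into the Schwinger--Dyson equation, and noting that $\mathscr{D}U(Y)=X=\mathscr{D}V_1(X)$ on the left-hand side, yields
$$\langle X,Q(X)\rangle_{\tau}=\langle [\mathscr{J}\mathscr{D}U](Y),[\mathscr{J}Q](X)\rangle_{\tau\otimes\tau^{op}}.$$
The compositional inverse hypothesis now enters decisively: $G\circ\mathscr{D}U=\mathrm{id}$ forces $Y=G(X)$, so $[\mathscr{J}\mathscr{D}U](Y)=[\mathscr{J}\mathscr{D}U](G(X))=A$. This is exactly the defining identity of a free Stein kernel of $X$ relative to the standard semicircular potential $V_1$. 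For the positivity statement, the convexity hypothesis $\mathscr{J}\mathscr{D}U=T^{*}T$ with $T\in M_n(\mathscr{P}^{(R)}\hat{\otimes}\mathscr{P}^{(R)})$ is preserved under evaluation at $G(X)$, which is a $*$-algebra morphism into $W^{*}(X)\bar{\otimes}W^{*}(X)$, giving $A=[T(G(X))]^{*}[T(G(X))]\geq 0$ in $M_n(W^{*}(X)\bar{\otimes}W^{*}(X))$.

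The main difficulty is analytic rather than algebraic. One must verify that the two compositions appearing in the argument, $Q\circ\mathscr{D}U$ inside $\mathscr{P}^{(R)}$ and $G(X)$ inside $W^{*}(X)\bar{\otimes}W^{*}(X)$, are well-defined and continuous with respect to the norms $\lVert\cdot\rVert_R$ and $\lVert\cdot\rVert_{\lVert \mathscr{D}U\rVert_R}$; this is precisely what the hypotheses $\mathscr{D}U\in(\mathscr{P}^{(R)})^n$ and $G\in(\mathscr{P}^{(\lVert\mathscr{D}U\rVert_R)})^n$ are engineered to guarantee. The non-commutative chain rule must then be extended from polynomials to the projective completion by a density argument. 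Once this functional-analytic bookkeeping is handled, the remainder of the argument is a direct multivariate transcription of the one-dimensional calculation of Theorem \ref{mstein}.
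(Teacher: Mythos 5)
Your proposal is correct and follows essentially the same route as the paper's own proof: invoke the Schwinger--Dyson equation for $Y\sim\tau_U$, substitute the composed test function $P\circ\mathscr{D}U$, apply the chain rule for the non-commutative Jacobian, and change variables via the compositional inverse to land on the Stein identity for $V_1$, with positivity read off from the convexity factorisation. The only discrepancy is notational: the paper writes the inverse map as $\mathscr{D}G$ (so $Y=\mathscr{D}G(X)$ and $A=[\mathscr{J}\mathscr{D}U](\mathscr{D}G(X))$) whereas you write $Y=G(X)$, which matches the hypothesis as literally stated and does not affect the argument.
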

\begin{proof}
    By definition, since $Y$ follows the free Gibbs law associated with $U$, it satisfies the following Schwinger-Dyson equation: $\forall P=(P_1,\ldots,P_n)\in \mathscr{P}^n$.
   \begin{eqnarray}
        \tau([\mathscr{D}U](Y)\cdot P(Y))=\tau\otimes\tau\circ \Tr([\mathscr{J}P](Y)),
    \end{eqnarray}
    So by setting $Q=P(\mathscr{D}U):=\bigg(P_i(\mathscr{D}_1U,\ldots,\mathscr{D}_nU)\bigg)_{i=1}^n$ component-wise, by the trace property of the state and the chain rule for the non-commutative Jacobian, we get
       
    \begin{eqnarray}
          \tau(P(\mathscr{D}U)(Y)\cdot [\mathscr{D}U](Y))&=&\tau\otimes\tau\circ \Tr([\mathscr{J}P(\mathscr{D}U)](Y))\nonumber\\
          &=&\tau\otimes\tau\circ \Tr([\mathscr{J}{\mathscr{D}}U](Y)\sharp[\mathscr{J}P](\mathscr{D}U(Y)).\nonumber
          \end{eqnarray}
Now, by changing the variables: $Y=(Y_1,\ldots,Y_n)=\mathscr{D}G(X_1,\ldots,Y_n)$, that $\mathscr{D}U(\mathscr{D}G(X))=X$, we come to
\begin{equation}
    \tau(X\cdot P(X))=\tau\otimes\tau\circ \Tr([\mathscr{J}\mathscr{D}U](\mathscr{D}G(X))\sharp [\mathscr{J}P](X)).\nonumber
\end{equation}
which is exactly the desired conclusion. The positivity of such a free Stein kernel then follows easily from the convexity assumption on $U$.
\end{proof}
\qed
\begin{flushleft}
    We now introduce the following free version of the quadratic Wasserstein distance, which was first introduced in the important work of Biane and Voiculescu \cite{BV}.
\end{flushleft}
\begin{definition}(Biane Voiculescu, \cite{BV})
The free quadratic Wasserstein distance is defined as the following infinimum over couplings with respect to the quadratic cost
\begin{eqnarray*}
    W_2((X_1,\ldots ,X_n),(Y_1,\ldots ,Y_n))&=&\inf \Bigl\{\lVert (X_i^{'} - Y_i^{'})_{1\leq i\leq n}\rVert_2 /
    (X_1^{'},\ldots ,X_n^{'},Y_1^{'},\ldots ,Y_n^{'}) \subset (M_3,\tau)\nonumber\\
    &&(X_1^{'},\ldots ,X_n^{'})\simeq(X_1,\ldots ,X_n),(Y_1^{'},\ldots ,Y_n^{'})\simeq(Y_1,\ldots ,Y_n)\Bigl\}
, \end{eqnarray*}
where $\simeq$ means equality in the $*$-distribution, where $(M_3,\tau)$ is a $W^*$-tracial probability space with each $(X_i^{'},Y_i^{'})\in M_3$, where $\simeq$ means equality in the distribution.
\end{definition}
\bigbreak
\begin{remark}
    We can also define the classical Wasserstein distance between two bounded tuples of random vectors by requiring that the coupling $(X_1',\ldots,X_n',Y_1',\ldots,Y_n')$ lives in an abelian $W^*$-tracial probability space.
\end{remark}
\begin{flushleft}
    Thus, as in the one-dimensional case, via the multidimensional free Wasserstein Stein discrepancy of Cébron \cite{C}, we can obtain the following stability estimates on the free Monge-Ampère equation of Guionnet and Shlyakhtenko, but first, following the work of Fathi, Cébron and Mai \cite{FCM}, we introduce the multidimensional version of the free Sobolev spaces associated with a non-commutative distribution.
\end{flushleft}
\begin{definition}
    For any (tracial or not in full generality) non-commutative distribution $\mu$, we denote $H^1(\mu)$ as the separation-completion of $\mathscr{P}^n$ with respect to the sesqui-linear form:
    \begin{equation}
       \langle P,Q\rangle_{H^1(\mu)}:=\mu\otimes \mu\circ\Tr((\mathscr{J}Q)^*\sharp \mathscr{J}P),\: P,Q \in \mathscr{P}^n.
    \end{equation}
\end{definition}
\begin{remark}
   If we specify the non-commutative law as the distribution $\mu_X$ of a tuple of self-adjoint operators $X=(X_1,\ldots,X_n)$, it is interesting to note that the isotropic cone which is defined as $\mathcal{N}_{\mu_X}:=\left\{P\in \mathscr{P}^n,\: \langle P,P\rangle_{H^1(\mu_X)}=0\right\}$ could contain non-constant polynomials due to the possible algebraic relations between the $X_i$'s. However, if we make some mild assumption on the $n$-tuple $X$, e.g. finite free Fisher information, finite free entropy or even the weakest condition of full non-microstate free entropy, i.e. $\delta^*(X)=n$, then the absence of algebraic relations is automatically satisfied, cf. Charleworth-Shlyakhtenko \cite{CS}, or Mai, Speicher and Weber \cite{MSW}. \end{remark}

    Now we recall that in the seminal paper by Guionnet and Shlyakhtenko \cite{GS}, the authors proved that existence of an analytic and invertible transport $U$ between the free Gibbs state associated with the quadratic potential (semicircular system) and a small perturbation of this potential, i.e. such that $\mathcal{D}U(S_1,\ldots,S_n)$ has the free Gibbs law $\tau_W$ associated with the potential $W$ when $(S_1,\ldots,S_n)$ are free semicircular variables exists and is unique (in an appropriate sense) if $\lVert W-V_1\rVert_{R}$ is small enough. In particular, such a transport solves the following free Monge-Ampère equation.
\begin{lemma}(Guionnet and Shlyakhtenko, Lemma 3.3 in \cite{GS})
\newline
    A free transport $U$ between the free Gibbs laws with potential $V_1=\frac{1}{2}\sum X_j^2$ and potential $W$ exists if and only if (denoting in a shorthand $\tau:=\tau_{V_1}$ as the standard semicircular law):
    \begin{equation}
        (1\otimes \tau+\tau\otimes 1)\Tr\log\mathscr{J}\mathscr{D}U=\mathscr{S}\left\{W(\mathscr{D}U(X))-\frac{1}{2}\sum X_j^2\right\}.
    \end{equation}
    where $\mathscr{S}$ means equality modulo cyclic symmetrization (or commutator).\end{lemma}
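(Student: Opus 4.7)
The plan is to characterize the transport condition via Schwinger–Dyson equations and then read the claimed identity as the free analogue of the classical Monge–Ampère equation $\log\det\nabla\Phi = V_1-W(\Phi)$, with $\log\det$ replaced by $(1\otimes\tau+\tau\otimes 1)\Tr\log$ acting on the non-commutative Jacobian. Recall that the free Gibbs laws $\tau_{V_1}$ (the semicircular law of $S=(S_1,\ldots,S_n)$) and $\tau_W$ are each characterized by their Schwinger–Dyson equations: for every test $Q\in\mathscr{P}^n$,
\begin{equation}
\tau(S\cdot Q(S))=(\tau\otimes\tau)\circ\Tr(\mathscr{J}Q(S)),\qquad \tau_W(\mathscr{D}W\cdot Q)=(\tau_W\otimes\tau_W)\circ\Tr(\mathscr{J}Q).\nonumber
\end{equation}
Consequently, $U$ is a transport (that is, $\mathscr{D}U(S)$ has law $\tau_W$) if and only if $Y:=\mathscr{D}U(S)$ satisfies the second equation.

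Next I would substitute $Y=\mathscr{D}U(S)$ into the target Schwinger–Dyson equation and use the chain rule $\mathscr{J}(Q\circ\mathscr{D}U)=[\mathscr{J}Q](\mathscr{D}U)\,\sharp\,\mathscr{J}\mathscr{D}U$ together with the semicircular Schwinger–Dyson equation applied to the test $P:=Q\circ\mathscr{D}U$, which yields
\begin{equation}
\tau\bigl(\mathscr{D}U(S)\cdot Q(Y)\bigr)=(\tau\otimes\tau)\circ\Tr\bigl([\mathscr{J}Q](Y)\,\sharp\,\mathscr{J}\mathscr{D}U(S)\bigr).\nonumber
\end{equation}
Comparing this to the desired identity $\tau(\mathscr{D}W(Y)\cdot Q(Y))=(\tau\otimes\tau)\circ\Tr([\mathscr{J}Q](Y))$, the existence of the transport is equivalent to the vanishing, for every $Q$, of
\begin{equation}
\tau\bigl((\mathscr{D}W(\mathscr{D}U(X))-X)\cdot Q(\mathscr{D}U(X))\bigr)-(\tau\otimes\tau)\circ\Tr\bigl([\mathscr{J}Q](\mathscr{D}U)\,\sharp\,(\mathscr{J}\mathscr{D}U-1\otimes 1\otimes I_n)\bigr).\nonumber
\end{equation}
Since $Q$ is arbitrary, recognizing the right-hand member as the cyclic derivative of a scalar functional, applied to $Q$, would then give a PDE for $U$ determined uniquely up to cyclic symmetrization.

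The heart of the proof is the free log-Jacobian identity: for $U\in\mathscr{P}^{(R)}$ with $\mathscr{J}\mathscr{D}U=Q^*Q\geq 0$, the functional $F(U):=(1\otimes\tau+\tau\otimes 1)\Tr\log\mathscr{J}\mathscr{D}U$ is well defined via holomorphic functional calculus, and its cyclic derivative satisfies
\begin{equation}
\mathscr{S}\,\mathscr{D}F(U)=(\tau\otimes\tau)\circ\Tr\bigl((\mathscr{J}\mathscr{D}U)^{-1}\,\sharp\,\mathscr{J}\mathscr{D}(\,\cdot\,)\bigr),\nonumber
\end{equation}
in a weak sense dual to testing against $\mathscr{P}^n$. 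Combined with the chain-rule identity $\mathscr{S}\,\mathscr{D}[W(\mathscr{D}U(X))]=\mathscr{S}\{\mathscr{D}W(\mathscr{D}U)\cdot\mathscr{J}\mathscr{D}U\}$ (with an analogous expression for $\mathscr{S}\,\mathscr{D}[\tfrac12\sum X_j^2]=\mathscr{S}\{X\}$), the vanishing condition above becomes
\begin{equation}
\mathscr{S}\bigl\{W(\mathscr{D}U(X))-\tfrac{1}{2}{\textstyle\sum_j}X_j^2\bigr\}=(1\otimes\tau+\tau\otimes 1)\Tr\log\mathscr{J}\mathscr{D}U,\nonumber
\end{equation}
which is the claimed equation. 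For the converse direction, given the PDE, the same chain of equivalences shows that $\mathscr{D}U(S)$ satisfies the Schwinger–Dyson equation for $W$, hence has distribution $\tau_W$ by uniqueness of the free Gibbs minimizer in the regime under consideration.

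The main obstacle is the rigorous derivation of the free log-Jacobian identity above: making precise the sense in which $\mathscr{S}\,\mathscr{D}$ applied to $(1\otimes\tau+\tau\otimes 1)\Tr\log\mathscr{J}\mathscr{D}U$ produces the term $(\tau\otimes\tau)\circ\Tr((\mathscr{J}\mathscr{D}U)^{-1}\,\sharp\,\mathscr{J}\mathscr{D}(\cdot))$. This requires a careful manipulation of the non-commutative matricial calculus on $M_n(\mathscr{P}\hat{\otimes}\mathscr{P}^{\mathrm{op}})$, using the relation $\mathscr{D}_j=m\circ(\partial_j)^{\sigma}$ and the flip, the cyclicity of the two partial traces $1\otimes\tau$ and $\tau\otimes 1$, and the fact that $\mathscr{J}\mathscr{D}U\geq 0$ guarantees $\log\mathscr{J}\mathscr{D}U$ is defined by a convergent power series in an appropriate projective-tensor Banach algebra. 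The cyclic symmetrization $\mathscr{S}$ is unavoidable because the Schwinger–Dyson pairing only sees the cyclic-invariant part of any scalar functional of $U$.
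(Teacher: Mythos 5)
This lemma is not proved in the paper at all: it is imported verbatim from Guionnet and Shlyakhtenko (Lemma 3.3 of \cite{GS}), so the only benchmark is the original argument there, and your outline follows essentially that same route — characterize the transport by the Schwinger--Dyson equation for the pushforward, compose test functions with $\mathscr{D}U$ and use the chain rule together with the semicircular Schwinger--Dyson equation, then identify the residual term with the cyclic gradient of the trace--log of the non-commutative Hessian and integrate back, using that the kernel of the cyclic gradient consists of constants and commutators (whence the ``modulo $\mathscr{S}$''). Two slips to fix along the way: in your second display the left-hand side should be $\tau\big(S\cdot Q(Y)\big)$, not $\tau\big(\mathscr{D}U(S)\cdot Q(Y)\big)$, since $\mathscr{D}V_1=X$ (your subsequent difference term $\mathscr{D}W(\mathscr{D}U(X))-X$ is consistent with the corrected version); and passing from ``the pairing against $Q(\mathscr{D}U(X))$ vanishes for all $Q$'' to an identity of non-commutative power series needs $\mathscr{D}U$ to admit a compositional inverse (or at least $W^*(\mathscr{D}U(X))=W^*(X)$), which is exactly what the perturbative regime of \cite{GS} supplies and should be invoked explicitly, as you do only for the converse direction.

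The genuine gap is the step you yourself call ``the main obstacle'', and it is not merely a technicality left aside: it is the entire analytic content of the lemma, and moreover the identity you state is not obviously the one required. What you write, $\mathscr{S}\mathscr{D}F(U)=(\tau\otimes\tau)\circ\Tr\big((\mathscr{J}\mathscr{D}U)^{-1}\sharp\mathscr{J}\mathscr{D}(\cdot)\big)$ ``dual to testing against $\mathscr{P}^n$'', reads as the first variation of the functional $F$ under perturbations of $U$; what the argument actually needs is a formula, in the $X$-variable, for the cyclic gradient of the scalar power series $(1\otimes\tau+\tau\otimes 1)\Tr\log\mathscr{J}\mathscr{D}U$, obtained by rewriting the term $(\tau\otimes\tau)\circ\Tr\big([\mathscr{J}Q](\mathscr{D}U)\sharp(\mathscr{J}\mathscr{D}U-1)\big)$ as $\tau(\Xi\cdot Q(\mathscr{D}U(X)))$ via the semicircular integration by parts (the adjoints $\partial_j^*$, i.e.\ the conjugate-variable calculus), and then identifying the correction terms of $\partial_j^*$ applied to entries of $(\mathscr{J}\mathscr{D}U)^{-1}$ with that cyclic gradient. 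This identification uses the convergent series for $\log$ in $M_n(\mathscr{P}^{(R)}\hat{\otimes}(\mathscr{P}^{(R)})^{op})$, the flip-symmetry of $\mathscr{J}\mathscr{D}U$, co-associativity of the free difference quotients, and the relation $\mathscr{D}_j=m\circ(\partial_j)^{\sigma}$; none of this is carried out or even precisely formulated in your proposal, and it is exactly the computation done in Section 3 of \cite{GS}. Until that identity is proved in the correct form (gradient in $X$, not differential in $U$), the chain of equivalences you describe does not close.
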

    \begin{flushleft}
        Using this notion of free Sobolev spaces, we can now reinterpret in a more pleasant way the quantitative stability result (which here is perturbative by nature) on the free Monge-Amp\'ere equation.
    \end{flushleft}
\begin{corollary}
    Let $X=(X_1,\ldots,X_n)$ be an $n$-tuple of bounded operators as in the previous theorem \ref{multivstein} and $S=(S_1,\ldots,S_n)$ be a standard semicircular system, both living in some $W^*$-tracial probability space. Then we have
    \begin{eqnarray}
        W_2(X,S)^2&\leq& \lVert [\mathscr{J}\mathscr{D}U](\mathscr{D}G(X))-(1\otimes 1)\otimes I_n\rVert_{L^2(M_n(W^*(X)\bar{\otimes} W^*(X))}
        \nonumber\\
        &=&\lVert [\mathscr{J}\mathscr{D}U](Y)-(1\otimes 1)\otimes I_n\rVert_{L^2(M_n(W^*(Y)\bar{\otimes}W^*(Y))}\nonumber\\
        &=&\lVert\mathscr{D}U-Id_{\mathscr{P}^n}\rVert_{H^1(\mu_Y)}.
    \end{eqnarray}
    where $Id_{\mathscr{P}^n}=(t_1,\ldots,t_n)$.
\end{corollary}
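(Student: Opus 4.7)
The plan is to chain together three ingredients that are already established earlier in the paper. First, I invoke the multivariate extension of Cébron's Wasserstein--Stein discrepancy inequality (Proposition 2.5 in \cite{C}, the multidimensional counterpart of Theorem \ref{wsc}, applied to the standard semicircular potential $V_1=\frac{1}{2}\sum_i t_i^2$), which yields
\[
W_2(X,S)^2 \leq \Sigma^*(X|V_1)^2.
\]
Then Theorem \ref{multivstein} produces the explicit admissible free Stein kernel $A=[\mathscr{J}\mathscr{D}U](\mathscr{D}G(X))$ for $X$ with respect to $V_1$. Upper-bounding the infimum defining $\Sigma^*(X|V_1)^2$ by the $L^2$-norm associated with this specific kernel immediately gives the first line of the announced estimate.

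For the second equality, it suffices to notice that $Y=\mathscr{D}G(X)$ by the very definition of $G$ as the compositional inverse of $\mathscr{D}U$ in the non-commutative analytic functional calculus. Since $\mathscr{D}U$ and $G$ intertwine the tuples $X$ and $Y$, the tracial von Neumann algebras coincide, $W^*(X)=W^*(Y)$, and the substitution $\mathscr{D}G(X)\mapsto Y$ preserves both the matrix structure and the trace, hence the $L^2$ norm on $M_n(W^*(X)\bar{\otimes}W^*(X))=M_n(W^*(Y)\bar{\otimes}W^*(Y))$.

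For the third equality, I unpack the definitions. Writing $Id_{\mathscr{P}^n}=(t_1,\ldots,t_n)$, its non-commutative Jacobian has $(i,j)$-entry $\partial_j t_i=\delta_{i,j}(1\otimes 1)$, so $\mathscr{J}Id_{\mathscr{P}^n}=(1\otimes 1)\otimes I_n$. By linearity of $\mathscr{J}$, this gives
\[
\mathscr{J}(\mathscr{D}U-Id_{\mathscr{P}^n})=\mathscr{J}\mathscr{D}U-(1\otimes 1)\otimes I_n,
\]
and evaluating at $Y$ the definition of the $H^1(\mu_Y)$ semi-norm via the sesqui-linear form $\mu_Y\otimes\mu_Y\circ\Tr((\mathscr{J}\cdot)^*\sharp\mathscr{J}\cdot)$ matches exactly the $L^2$ quantity displayed on the second line.

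The main verification to carry out carefully---though it is not a genuine obstacle---is to check that $\mathscr{D}U-Id_{\mathscr{P}^n}$ belongs to the domain of $\lVert\cdot\rVert_{H^1(\mu_Y)}$ as defined by Fathi--Cébron--Mai \cite{FCM}. This is guaranteed by the assumptions $U\in\mathscr{P}^{(R)}$ and $G\in(\mathscr{P}^{(\lVert\mathscr{D}U\rVert_R)})^n$, together with the extension result of Nelson--Zeng \cite{NZi}, which makes $\mathscr{J}\mathscr{D}U$ well-defined as an element of $M_n(\mathscr{P}^{(R)}\hat{\otimes}\mathscr{P}^{(R)})$; its evaluation at $Y$ then lies in $L^2(M_n(W^*(Y)\bar{\otimes}W^*(Y)))$ because $R>\lVert Y\rVert_\infty$. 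Modulo this bookkeeping, the corollary is a direct concatenation of the multivariate WS inequality, the explicit form of the free moment Stein kernel from Theorem \ref{multivstein}, and a straightforward unfolding of the $H^1(\mu_Y)$ semi-norm.
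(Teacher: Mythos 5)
Your proposal is correct and follows essentially the same route the paper intends: the corollary is stated there without a separate proof, being presented as the direct concatenation of Cébron's multidimensional Wasserstein--Stein discrepancy inequality (the multivariate form of Theorem \ref{wsc}), the explicit admissible kernel $[\mathscr{J}\mathscr{D}U](\mathscr{D}G(X))$ from Theorem \ref{multivstein} used to bound the infimum defining $\Sigma^*(X|V_1)$, and the unfolding of the $H^1(\mu_Y)$ semi-norm exactly as you do. The only blemish is the exponent bookkeeping (your chain yields $W_2(X,S)^2$ bounded by the square of the displayed norm, whereas the corollary writes the norm unsquared), but this mismatch is inherited from the paper's own conventions rather than a gap in your argument.
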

In a perturbative regime, that is for free Gibbs near the semicircular law, Bahr and Boschert have been able to extend the construction in a multidimensional case. The method they employed is not variational and relies on a study of an analogous free Monge-Ampère PDE to conclude via a fixed point argument.
\begin{theorem}(Bahr and Boschert, Theorem 3.1 in \cite{BB})\label{th11}
    There exists a $C>0$ and a $\epsilon>0$ such that if $W(X_1,\ldots,X_n)$ is a self-adjoint n.c power series containing only terms of even degree, and $\lVert W\rVert_C<\epsilon$, then there is a corresponding power series $V(Y_1,\ldots,Y_n)$ such that, when $Y$ has the free Gibbs law associated with $\frac{1}{2}\lVert X\rVert^2+V$, then $Y+\mathscr{D}_YV(Y)$ has free Gibbs law $\frac{1}{2}\lVert X\rVert^2+W$.
\end{theorem}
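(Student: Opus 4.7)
The plan is to follow the free monotone transport scheme of Guionnet and Shlyakhtenko \cite{GS}, specialised to the moment-map ansatz $U(Y)=\frac{1}{2}\lVert Y\rVert^2+V(Y)$, so that $\mathscr{D}U=\mathrm{Id}+\mathscr{D}V$ is the desired transport. The requirement that $Y+\mathscr{D}V(Y)$ have law $\tau_{V_1+W}$ whenever $Y\sim\tau_{V_1+V}$ translates, via the Schwinger-Dyson characterisation of free Gibbs laws and the chain rule for the non-commutative Jacobian, into a variant of Lemma 3.3 of \cite{GS}: modulo cyclic symmetrisation, $V$ must solve
\begin{equation*}
(1\otimes \tau_{V_1+V} + \tau_{V_1+V}\otimes 1)\Tr\log(I+\mathscr{J}\mathscr{D}V)\;\underset{\mathscr{S}}{\equiv}\;(V_1+W)(Y+\mathscr{D}V(Y))-(V_1+V)(Y).
\end{equation*}
The novelty compared to the GS situation is that the source is itself a perturbed Gibbs law rather than the free semicircular system.

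The second step is to linearise around $V=0$, $W=0$. Expanding $\log(I+\mathscr{J}\mathscr{D}V)=\mathscr{J}\mathscr{D}V-\tfrac{1}{2}(\mathscr{J}\mathscr{D}V)^2+\cdots$ and the composition $(V_1+W)(Y+\mathscr{D}V(Y))$ in powers of $V$, I would isolate an explicit linear operator $LV$ built from the number operator $\mathscr{N}$, the symmetrisation $\mathscr{S}$, and expectations against the free semicircular law, from a remainder $N(V,W)$ gathering all contributions that are at least quadratic in $(V,W)$ or of mixed type $V\cdot W$. The equation then becomes $LV\equiv_{\mathscr{S}}W+N(V,W)$, in which $L$ is essentially a shift of $\mathscr{N}$ and is therefore invertible on the closed subspace of self-adjoint power series of even degree with vanishing constant term. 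The even-parity hypothesis on $W$ is precisely what is needed to bypass the obstruction coming from odd-degree elements of the kernel of $L$. Inverting $L$ recasts the problem as the fixed-point equation
\begin{equation*}
V=\Phi_W(V):=L^{-1}\bigl(W+N(V,W)\bigr)
\end{equation*}
in the Banach algebra $\mathscr{P}^{(C)}$.

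I would then run a Banach contraction argument on the ball $B_\delta=\{V\in\mathscr{P}^{(C)}:V=V^*,\ V\text{ of even degree},\ \lVert V\rVert_C\leq\delta\}$. Submultiplicativity of $\lVert\cdot\rVert_C$ on $M_n(\mathscr{P}^{(C)}\hat{\otimes}(\mathscr{P}^{(C)})^{op})$, the analyticity of $X\mapsto\log(I+X)$ near zero, and a standard Taylor bound for the shift $Y\mapsto W(Y+\mathscr{D}V(Y))$ together show that, for $C$ large enough, $\delta$ small enough (depending on $C$ only), and $\lVert W\rVert_C<\epsilon:=\epsilon(C,\delta)$, the map $\Phi_W$ stabilises $B_\delta$ and is a strict contraction there. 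The unique fixed point $V\in B_\delta$ is then the sought free moment-map potential; self-adjointness and evenness propagate automatically since $L$, $L^{-1}$, $N$ and all semicircular expectations respect both gradings.

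The hard part will be controlling $\tau_{V_1+V}$ along the iteration, because $\Phi_W$ involves expectations under the source Gibbs law, which itself depends on the unknown $V$. For small $\lVert V\rVert_C$, the potential $V_1+V$ is a small analytic perturbation of the quadratic one, so the existence, uniqueness and (Lipschitz) analytic dependence of $\tau_{V_1+V}$ on $V$ are available from the perturbative theory of free Gibbs laws (cf.\ Remark 1 in Section 2), but injecting this dependence cleanly into the contraction argument typically forces one to enlarge $\Phi_W$ into a joint fixed-point equation for the pair $(V,\xi_V)$, where $\xi_V$ denotes the tuple of non-commutative conjugate variables of $\tau_{V_1+V}$, and to prove contraction in a product norm. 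Once this coupled fixed point is solved, the conclusion of the theorem is immediate.
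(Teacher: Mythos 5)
This theorem is not proved in the paper — it is quoted from Bahr and Boschert \cite{BB}, whose method the paper explicitly describes as non-variational, based on the free Monge--Amp\`ere equation \eqref{bbm} and a perturbative fixed-point argument \`a la Guionnet--Shlyakhtenko \cite{GS}. Your proposal (derive exactly that equation modulo cyclic symmetrization, linearise around $V=0$, invert the linearised operator on even self-adjoint power series, contract in $\mathscr{P}^{(C)}$, and treat the $V$-dependence of the source law $\tau_{V_1+V}$ via the perturbative dependence of free Gibbs states) is essentially the same approach as the cited proof.
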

\begin{flushleft}
\textbf{Assumptions:}
\newline
From now on and in the sequel, we assume that given $C>0$ and a $\epsilon>0$ such that theorem \ref{th11} is satisfied. In particular by Proposition $3. 2$ in \cite{BB}, for this choice of cutoff $C$ we know that the free Gibbs law $\tau_{\tilde{W}}$ (with $\lVert W\rVert_{C}<\epsilon$) to the Schwinger-Dyson equation of the potential $\tilde{W}$ exits, and which by theorem \ref{th11} is also the non-commutative law of $Y+\mathscr{D}_YV(Y)$, where we denote $\tilde{V} =V_1+V=\frac{1}{2}\sum_{i=1}^n X_i^2+V$, and $\tilde{W}:=V_1+W=\frac{1}{2}\sum_{i=1}^n X_i^2+W$. 
\newline
We also assume that this cutoff $C>0$ is large enough so that the following non-commutative Hessian metric $U=\mathscr{J}\mathscr{D}\tilde{V}$ and its inverse $U^{-1}=(\mathscr{J}\mathscr{D}\tilde{V})^{-1}$ exists in $M_n(\mathscr{P}^{(C)}\hat{\otimes} \mathscr{P}^{(C)})$.
\bigbreak
    In particular, Bahr and Boschert then prove that $V$ then solves the following variant of the Monge-Ampère equation. 
\end{flushleft}

\begin{lemma}(Bahr and Boschert, lemma 3.3 in \cite{BB})\label{lma3}
\newline
    The non-commutative power series solution of theorem \ref{th11} satisfies
    \begin{equation}\label{bbm}
        (1\otimes \tau_{\tilde{V}}+\tau_{\tilde{V}}\otimes 1)\Tr\log(1+\mathscr{J}\mathscr{D}V)=\mathscr{S}\left\{W(Y+\mathscr{D}V)+(\mathscr{N}-id)V+\frac{1}{2}\lvert \mathscr{D}V\rvert^2\right\}.
    \end{equation}
    where here $\mathscr{S}$ means equality modulo cyclic symmetrization (or commutator).\end{lemma}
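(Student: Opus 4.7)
The plan is to derive the equation from the Schwinger-Dyson characterisations of both free Gibbs laws $\tau_{\tilde V}$ and $\tau_{\tilde W}$, related by the transport $X:=T(Y)=Y+\mathscr{D}V(Y)$, together with the chain rule for the non-commutative Jacobian; the stated identity will then emerge after a cyclic expansion of $\tilde W(T)-\tilde V$. Observe at the outset that $\mathscr{J}T = I_n + \mathscr{J}\mathscr{D}V$, which explains the occurrence of $\log(1+\mathscr{J}\mathscr{D}V)$ on the left-hand side.

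The first step is to compare two applications of the Schwinger-Dyson equation. Applying the SDE for $Y\sim\tau_{\tilde V}$ to the pulled-back test polynomial $P\circ T$ (with $P\in\mathscr{P}^n$ arbitrary) and using the chain rule $\mathscr{J}(P\circ T) = \mathscr{J}T\,\sharp\,\mathscr{J}P(T)$, and simultaneously applying the SDE for $X\sim\tau_{\tilde W}$ to $P$ and pushing it back to the $Y$-variables via $X=T(Y)$, one obtains by subtraction
\begin{equation*}
\tau_{\tilde V}\bigl((\mathscr{D}\tilde V - \mathscr{D}\tilde W\circ T)\cdot P(T)\bigr) = (\tau_{\tilde V}\otimes\tau_{\tilde V})\circ\Tr\bigl(\mathscr{J}\mathscr{D}V\,\sharp\,\mathscr{J}P(T)\bigr),
\end{equation*}
valid for every $P$. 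The main obstacle is now to integrate this infinite family of scalar identities into a single closed equation in $\mathscr{P}/[\mathscr{P},\mathscr{P}]$. The appropriate tool is the non-commutative change-of-variable formula for Voiculescu's conjugate variable (equivalently, for the free entropy) under a smooth transport: it expresses the conjugate variable of $X=T(Y)$ with respect to $T_\sharp\tau_{\tilde V}$ as $\mathscr{D}\tilde V(Y)$ corrected by the cyclic gradient of a trace-of-log term built from $\mathscr{J}T$. Forcing this corrected conjugate variable to coincide with $\mathscr{D}\tilde W(T(Y))$ and integrating in the cyclic-gradient sense -- essentially the calculation Guionnet and Shlyakhtenko carried out in their Lemma 3.3, here adapted so that the source is $\tau_{\tilde V}$ rather than the pure semicircular law -- produces
\begin{equation*}
(1\otimes\tau_{\tilde V}+\tau_{\tilde V}\otimes 1)\,\Tr\log(1+\mathscr{J}\mathscr{D}V) = \mathscr{S}\bigl(\tilde W(T(Y)) - \tilde V(Y)\bigr).
\end{equation*}

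The final step is a routine expansion of this right-hand side. Writing $\tilde V = \tfrac{1}{2}\sum_j Y_j^2 + V$ and $\tilde W(T) = \tfrac{1}{2}\sum_j(Y_j + \mathscr{D}_j V)^2 + W(T)$, the only term that requires care is the cross contribution $\tfrac{1}{2}\sum_j(Y_j\mathscr{D}_j V + \mathscr{D}_j V\cdot Y_j)$; by the non-commutative Euler identity $\mathscr{S}\bigl(\sum_j Y_j\mathscr{D}_j V\bigr) = \mathscr{S}(\mathscr{N}V)$ (a one-line check on monomials, since $\mathscr{N}$ multiplies a degree-$p$ monomial by $p$ while each cyclic rotation contributes one occurrence of $Y_j\mathscr{D}_j V$), this is cyclically equivalent to $\mathscr{N}V$. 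Collecting the remaining terms yields $\mathscr{S}(\tilde W(T)-\tilde V) = \mathscr{S}\bigl(W(T) + (\mathscr{N}-\mathrm{id})V + \tfrac{1}{2}|\mathscr{D}V|^2\bigr)$, which is precisely the stated right-hand side. The genuinely non-trivial ingredient is thus squarely the middle step -- the free Monge-Amp\`ere change of variables -- while the first and third are essentially formal manipulations of Schwinger-Dyson.
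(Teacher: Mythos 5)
You have reduced the lemma to the general transport identity $(1\otimes\tau_{\tilde V}+\tau_{\tilde V}\otimes 1)\Tr\log\mathscr{J}T=\mathscr{S}\{\tilde W(T(Y))-\tilde V(Y)\}$ for $T=\mathscr{D}\tilde V=Y+\mathscr{D}V$, and your final expansion is correct: $\mathscr{J}T=1+\mathscr{J}\mathscr{D}V$ explains the left-hand side, the cross terms $\tfrac12\sum_j(Y_j\mathscr{D}_jV+\mathscr{D}_jV\,Y_j)$ are cyclically equivalent to $\mathscr{N}V$ by the Euler identity, and the remaining terms assemble into $W(Y+\mathscr{D}V)+(\mathscr{N}-\mathrm{id})V+\tfrac12\lvert\mathscr{D}V\rvert^2$, matching \eqref{bbm}. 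Be aware, however, that the paper itself gives no proof of this lemma: it is imported verbatim from Bahr--Boschert (Lemma 3.3 of \cite{BB}), whose derivation is precisely the adaptation of Guionnet--Shlyakhtenko's computation that you gesture at, so your outline follows the same route as the actual source.

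The gap is that the middle step is the entire content of the lemma, and you do not carry it out: invoking "essentially the calculation Guionnet and Shlyakhtenko carried out in their Lemma 3.3, here adapted so that the source is $\tau_{\tilde V}$" is, in this context, close to assuming the statement, since that adaptation is what Bahr--Boschert's Lemma 3.3 establishes. To close it you would need to: (i) start from the requirement that $T_\sharp\tau_{\tilde V}$ satisfies the Schwinger--Dyson equation for $\tilde W$, write $(\mathscr{J}P)(T)=(\mathscr{J}T)^{-1}\sharp\,\mathscr{J}(P\circ T)$ (using invertibility of $1+\mathscr{J}\mathscr{D}V$ for $\lVert W\rVert_C$ small) and integrate by parts against $\tau_{\tilde V}$ via its own Schwinger--Dyson equation, i.e. the divergence formula in which the conjugate variable is $\mathscr{D}_j\tilde V(Y)$ rather than $Y_j$ as in the semicircular case; (ii) use density of $\{P(T):P\in\mathscr{P}^n\}$ (again invertibility of $T$ near the identity) to upgrade the family of scalar identities to an identity of elements; (iii) identify both sides as cyclic gradients, in particular the identity expressing $\mathscr{D}_j\bigl[(1\otimes\tau_{\tilde V}+\tau_{\tilde V}\otimes1)\Tr\log\mathscr{J}T\bigr]$ as the divergence term produced in (i), and integrate modulo cyclic symmetrization while fixing the additive constant. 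None of these steps fails, but they are where all the work lies. Two smaller points: the subtraction identity of your first step is never used afterwards, and your phrasing of the change-of-variables formula is loose --- what equals $\mathscr{D}\tilde V(Y)$ plus the cyclic gradient of the trace-log term is not the conjugate variable of $X=T(Y)$ itself but its pull-back hit with $\mathscr{J}T\,\sharp$ (equivalently the cyclic gradient of $\tilde W\circ T$), which is exactly why the identity integrates to the stated equation.
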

    
    \begin{flushleft}
    Since we want to adopt the diffusion point of view exposed for the one-dimensional case, we will linearise this equation and show that the free moment map (modulo commutator and replacement by a non-flat Laplacian) is still an eigenvector of the Laplacian on a non-commutative Hessian manifold, which is always associated with the eigenvalue $-1$.
    \end{flushleft}
    \bigbreak
In the following we rewrite the equation \eqref{bbm} with $\tilde{V}=V_1+V$ and in particular (omitting the superscript $Y$ as the context is clear) that $Y+\mathscr{D}V=\mathscr{D}\tilde{V}$ and $\mathscr{J}\mathscr{D}\tilde{V}=1+\mathscr{J}\mathscr{D}V$.
\bigbreak
Let's now introduce the following operator, which will be the Laplacian part of the free Monge-Ampère equation and will allow us to reformulate it as a free diffusion equation. 
\begin{definition}
For a cutoff $C>0$, with $W\in \mathscr{P}^{(C)}$ and $\epsilon>0$, chosen small enough so that both $\lVert W\rVert_{C}<\epsilon$ and the above statements holds.
\newline
Then, for an arbitrary non-commutative power series $g\in \mathscr{P}^{(C)}$, we set 
    \begin{eqnarray}
    \mathscr{L}_{\tilde{V}}^{\tilde{W}}g&=&2\sum_{i,j=1}^n\: m\circ(id\otimes \tau_{\tilde{V}}\otimes id)(1\otimes (\mathscr{J}\mathscr{D}\tilde{V})^{-1}\sharp (\partial_i\otimes id\circ \partial_j g)\sharp  (\mathscr{J}\mathscr{D}\tilde{V})^{-1}\otimes 1)\nonumber\\
    &-&\sum_{i=1}^n\partial_i\tilde{W}(g)\sharp \mathscr{D}_i\tilde{V}.
\end{eqnarray}
where $m$ is the standard multiplication operator and $(\partial_i\otimes id)\circ \partial_i$ denotes the mixed second-order derivative. We recall that the free difference quotients are coassociative derivatives, i.e. $(\partial_i\otimes id)\circ \partial_j=(id\otimes \partial_j)\circ\partial_i$, see the important work of Dabrowski, Guionnet and Shlyakhtenko in \cite{Dab10,YGS}, and Biane and Speicher \cite{BS} for further details.
\begin{remark}
    The first term of the operator above is in fact the "flat Laplacian".
    In the case where $\tilde{W}=V_1$ (i.e. $W=0$), we in fact are left with the infinitesimal generator of free Langevin diffusion with potential $V$
\begin{equation}
    dX_t=-[\mathscr{D}V](X_t)+\sqrt{2}dS_t.\nonumber
\end{equation}
where here $S_t=(S_t^{(1)},\ldots, S_t^{(n)})_{t\geq 0}$ is a $n$-dimensional free Brownian motion.
\end{remark}
    
\end{definition}

\begin{flushleft}
We can now "differentiate" this equation \eqref{bbm} (taking the full cyclic gradient in every direction) to finally write it as a free diffusion equation by introducing only these diffusions operators as follows
    \begin{equation}
        L_{\tilde{V}}^{\tilde{W}}\mathscr{D}_i\tilde{V}+R\mathscr{D}_i\tilde{V}=-\mathscr{S}\mathscr{D}_i\tilde{V}.
    \end{equation}
    where $R$ is a second order term (whose expression can be computed explicitly), which we will remove by modifying the Laplacian using a new free differential calculus for a bigger set of test functions as discovered by Dabrowski, Guionnet in \cite{YGS}.
\end{flushleft}
Indeed, inspired by Dabrowski, Guionnet and Shlyakhtenko's work on free transport for convex potentials \cite{YGS}, we set the following deformed Laplacian, which was introduced to remove this additional second order term $R$ appearing in the free Monge-Ampère equation. The procedure basically amounts to enlarge the set of test functions (to the space of functions with expectations) and change the generator of the diffusion to involve differentiation under expectation. The reader might consult the important heuristic details given in the introduction of \cite{YGS}. It is a crucial fact and it will help to see that free diffusion equations are indeed semigroups with generators given by:
\begin{equation}
    \Delta_{\tilde{W}}^{\tilde V}=\mathscr{L}_{\tilde{W}}^{\tilde V}+\delta_{\tilde{W}}^{\tilde V}.\nonumber
\end{equation}
where $\delta_{\tilde{W}}^{\tilde V}$ is the derivation defined on the following space of non-commutative polynomials with expectations already considered by C\'ebron in \cite{ceb2}
$\mathbb{C}\left\{X_1,\ldots,X_n\right\}:=\mathbb{C}\langle X_1,\ldots,X_n\rangle \otimes \mathscr{S}(\mathbb{C}\langle X_1,\ldots,X_n\rangle)$, where here we let $\mathscr{S}(\mathbb{C}\langle X_1,\ldots,X_n\rangle)$ denotes the symmetric algebra of non-commutative polynomials (see appendix in \cite{ceb2} for furthers details on this construction) by
\begin{equation}
    \delta_{\tilde{W}}^{\tilde {V}}(P_0)=0,\: \delta_{\tilde{W}}^{\tilde {V}}(\tau(P))=\tau(\mathscr{L}_{\tilde{W}}^{\tilde {V}}(P_i)).\nonumber
\end{equation}
The action of this operator can then be easily extended to consider non-commutative power series (analytic functions) $\mathscr{P}^{(R)}$, for some $R>0$  with expectations, i.e. for analytic functions of the form
\begin{equation}
    \sum_{P_0,\ldots,P_n\in \mathscr{P}}a_{P_0,\ldots,P_n}P_0\tau(P_1)\ldots\tau(P_n).\nonumber
\end{equation}
which is exactly the form of a free transport map found by Guionnet and Shlyakhtenko \cite{GS}.
\bigbreak
Since we want to solve the free Monge-Ampère equation in gradient form, we have to define a cyclic gradient that respects the property of differentiation under expectation. In fact, the solution is given by introducing the full cyclic gradient, which is then defined on this space of analytic functions with expectation by setting:
\begin{equation}
    \mathscr{D}_i(P_0\tau(P_1)\ldots\tau(P_n))=\sum_{j=0}^n\mathscr{D}_iP_j\prod_{k\neq j}\tau(P_k).\nonumber
\end{equation}
The action of the full free difference quotient is also needed as in is involve sin the transport equation and it is given as follows, 
\begin{equation}
\partial_i(P_0\tau(P_1)\ldots\tau(P_n))=\partial_iP_0\tau(P_1)\ldots\tau(P_n).\nonumber
\end{equation}
Moreover, we have the commutation relation between the \textit{flat Laplacian} adapted to the free diffusion and the full cyclic gradient, i.e setting \begin{equation}
    \Delta(\cdot)=2\sum_{i,j=1}^n\: m\circ(id\otimes \tau_{\tilde{V}}\otimes id)\partial_i\otimes id\circ \partial_j(\cdot),
\end{equation}
we have
\begin{equation}
    \Delta\mathscr{D}_i=\mathscr{D}_i\Delta, \:\forall i=1,\ldots,n.
\end{equation}
See for example Dabrowski, Guionnet and Shlyakhtenko, equation $(14)$ in \cite{YGS} to note this fact. 
\begin{flushleft}
    Then we can finally check that modulo commutators, we have that in each direction $\mathscr{D}_iV$ is an eigenvalue of the following free diffusion operator.
\begin{proposition}
Each $\mathscr{D}_i\tilde{V}, i=1,\ldots,n$ is an eigenvalue (modulo) commutator of the free Laplacian.
\begin{equation}\label{81}
     \Delta_{\tilde{W}}^{\tilde {V}}(\mathscr{D}_i\tilde{V})=-\mathscr{S}\mathscr{D}_i\tilde{V},\:\forall\:i=1,\ldots,n.
\end{equation}
Moreover, the associated eigenspace is composed (modulo) commutators by all linear functions.
\end{proposition}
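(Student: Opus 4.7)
My strategy is to extract the eigenvalue equation \eqref{81} by applying the full cyclic gradient $\mathscr{D}_i$ to each side of the Bahr--Boschert free Monge--Amp\`ere equation \eqref{bbm} of Lemma \ref{lma3}, and then to match the resulting identity with the decomposition $\Delta_{\tilde W}^{\tilde V} = \mathscr{L}_{\tilde W}^{\tilde V} + \delta_{\tilde W}^{\tilde V}$ evaluated at $\mathscr{D}_i\tilde V$. Two structural facts drive the whole computation: first, since $\mathscr{D}_i$ factors through cyclic symmetrization modulo commutators, one may drop the outer $\mathscr{S}\{\cdot\}$ on the right-hand side of \eqref{bbm} before differentiating; second, the flat Laplacian commutes with each $\mathscr{D}_i$ (equation $(14)$ in \cite{YGS}), which permits the exchange of $\mathscr{D}_i$ with the second-order tensor operator $\partial_j\otimes\mathrm{id}\circ\partial_k$ appearing inside $\mathscr{L}_{\tilde W}^{\tilde V}$.

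\textbf{Computation.} Using $1+\mathscr{J}\mathscr{D}V=\mathscr{J}\mathscr{D}\tilde V$ and the standard formula $d\,\Tr\log U = \Tr(U^{-1}\sharp dU)$, differentiating the left-hand side $(1\otimes\tau_{\tilde V}+\tau_{\tilde V}\otimes 1)\Tr\log\mathscr{J}\mathscr{D}\tilde V$ by $\mathscr{D}_i$ produces, modulo commutators, two contributions: the ``flat-Laplacian'' second-order term in the definition of $\mathscr{L}_{\tilde W}^{\tilde V}(\mathscr{D}_i\tilde V)$, plus an extra piece coming from differentiating under the two expectations $\tau_{\tilde V}$. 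By the very construction of the space of polynomials with expectation $\mathbb{C}\{X_1,\ldots,X_n\}$ and of the derivation defined in \cite{ceb2}, this latter piece is precisely $\delta_{\tilde W}^{\tilde V}(\mathscr{D}_i\tilde V)$. On the right-hand side, the cyclic-gradient chain rule applied to $W(\mathscr{D}\tilde V)$ --- upgraded from $W$ to $\tilde W = V_1 + W$ by absorbing the contribution $\mathscr{D}_i V_1 = X_i$ hidden in $\tilde V = V_1+V$ --- reassembles as the drift part of $\mathscr{L}_{\tilde W}^{\tilde V}(\mathscr{D}_i\tilde V)$, while the Euler-type term $\mathscr{D}_i(\mathscr{N}-\mathrm{id})V$ combined with the quadratic piece $\tfrac12\mathscr{D}_i\lvert\mathscr{D}V\rvert^2$ collapses, after cyclic symmetrization, to exactly $-\mathscr{S}\mathscr{D}_i\tilde V$.

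\textbf{Matching and main obstacle.} Rearranging the two sides gives $\mathscr{L}_{\tilde W}^{\tilde V}(\mathscr{D}_i\tilde V)+\delta_{\tilde W}^{\tilde V}(\mathscr{D}_i\tilde V) = -\mathscr{S}\mathscr{D}_i\tilde V$ modulo commutators, which is precisely \eqref{81}. The principal difficulty is the careful bookkeeping of the cyclic-derivative chain rules through the nested compositions $W(\mathscr{D}\tilde V)$ and through $\Tr\log\mathscr{J}\mathscr{D}\tilde V$, together with the rigorous verification --- in the projective topology of $\mathscr{P}^{(C)}\hat\otimes\mathscr{P}^{(C)}$ with $C$ large enough as supplied by Theorem \ref{th11} --- that the ``under-expectation'' contribution produced on the left coincides, term by term, with the action of $\delta_{\tilde W}^{\tilde V}$ on $\mathscr{D}_i\tilde V$. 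This matching is in fact the very reason for which the enlarged test-function space $\mathbb{C}\{X_1,\ldots,X_n\}$ and the derivation $\delta_{\tilde W}^{\tilde V}$ were introduced in \cite{YGS,ceb2}.

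\textbf{Linear functionals in the eigenspace.} For the second assertion, let $L=\sum_k c_k X_k$ be an arbitrary linear function. Since $\partial_j\otimes\mathrm{id}\circ\partial_k L=0$, the flat-Laplacian part of $\mathscr{L}_{\tilde W}^{\tilde V}(L)$ vanishes identically, and $\delta_{\tilde W}^{\tilde V}(L)=0$ because $L$ carries no expectation factor. Only the drift term survives, and the Schwinger--Dyson equation satisfied by the free Gibbs law $\tau_{\tilde W}$ with potential $\tilde W$ identifies this drift, modulo commutators, with $-\mathscr{S}L$. Hence $\Delta_{\tilde W}^{\tilde V}(L)=-\mathscr{S}L$, which shows that the entire linear span $\mathrm{span}\{X_1,\ldots,X_n\}$ sits inside the $-1$-eigenspace modulo commutators, as claimed.
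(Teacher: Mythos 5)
Your derivation of the eigenvalue identity \eqref{81} follows essentially the same route as the paper: apply the full cyclic gradient to the Bahr--Boschert equation \eqref{bbm} (dropping the outer $\mathscr{S}$ is legitimate since the cyclic gradient annihilates commutators), recognize the differentiated $\Tr\log$ term as the second-order part of $\mathscr{L}_{\tilde W}^{\tilde V}(\mathscr{D}_i\tilde V)$ plus an ``under-expectation'' remainder, and absorb that remainder into the derivation $\delta_{\tilde W}^{\tilde V}$, which is exactly how the paper passes from $L_{\tilde V}^{\tilde W}\mathscr{D}_i\tilde V+R\mathscr{D}_i\tilde V=-\mathscr{S}\mathscr{D}_i\tilde V$ to the deformed Laplacian $\Delta_{\tilde W}^{\tilde V}=\mathscr{L}_{\tilde W}^{\tilde V}+\delta_{\tilde W}^{\tilde V}$. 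Like the paper, you assert rather than verify that the under-expectation contribution coincides with $\delta_{\tilde W}^{\tilde V}(\mathscr{D}_i\tilde V)$ (the nontrivial bookkeeping imported from \cite{YGS}), and your accounting of the right-hand side is slightly misattributed: $\mathscr{D}_i\bigl((\mathscr{N}-id)V+\tfrac12\lvert\mathscr{D}V\rvert^2\bigr)$ by itself produces a term of the form $\sum_j\partial_j\mathscr{D}_iV\sharp\mathscr{D}_j\tilde V$, and the eigenvalue term $-\mathscr{S}\mathscr{D}_i\tilde V$ only appears after completing $\mathscr{D}_iV$ to $\mathscr{D}_i\tilde V$ inside the drift; your parenthetical about absorbing $\mathscr{D}_iV_1=X_i$ indicates the intended (correct) regrouping, so at the level of detail the paper itself offers, this part is acceptable.

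The genuine gap is in the ``moreover'' part. You take $L=\sum_k c_kX_k$ linear in the variables on which $\Delta_{\tilde W}^{\tilde V}$ acts (the source variables, distributed according to $\tau_{\tilde V}$) and claim the surviving drift term is identified with $-\mathscr{S}L$ by the Schwinger--Dyson equation for $\tau_{\tilde W}$. This fails twice over. First, since $\partial_kL=c_k\,1\otimes1$, the drift applied to $L$ is, up to commutators, $-\sum_kc_k[\mathscr{D}_k\tilde W](\mathscr{D}\tilde V)$, a composition with the transport map; this equals $-\mathscr{S}L$ only when $\mathscr{D}\tilde W\circ\mathscr{D}\tilde V=\mathrm{id}$, i.e. $\tilde W=\tilde V^*$, which is not assumed (already in the paper's one-dimensional picture, $\mathscr{L}_{\varphi}$ applied to the coordinate function $x$ gives $-u'(\varphi'(x))$, not $-x$). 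Second, the Schwinger--Dyson equation is an identity of traces against test functions, not an identity in the algebra, so it cannot upgrade that drift to $-\mathscr{S}L$ as elements of $\mathscr{P}^{(C)}$. The statement should be read as in Klartag--Kolesnikov and in the paper's one-dimensional section: the $(-1)$-eigenspace is spanned, modulo commutators, by the $\mathscr{D}_i\tilde V$, $i=1,\ldots,n$, i.e. by the functions that become the linear coordinate functions of the target variables $X=\mathscr{D}\tilde V(Y)$ under the moment-map identification of $M_{n.c}^*$ with its dual --- not by arbitrary linear polynomials in the source variables. That reading follows from \eqref{81} and linearity in the direction $i$, and is what should replace your appeal to Schwinger--Dyson.
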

\end{flushleft}
\bigbreak
\begin{flushleft}
    In particular, the classical analogue of the important equation \eqref{81}, which plays a key role in many results, can be used to recover (at least asymptotically for large $n$) a central phenomenon in convex geometry, formally proved in Klartag's seminal work 
\cite{convex}, which states that:
\newline
"\textit{High-dimensional, centered, isotropic convex bodies (or isotropic log-concave distributions) have most of their one-dimensional marginal that are approximately standard Gaussian.}" 
\bigbreak
In fact, let's assume that $X$ is a random vector which is uniformly distributed on the convex body $K$ (with non-empty interior) supposed to be an isotropic body of volume one. Then, for a subset $\Theta\subset\mathbb{S}^{n-1}$ with large spherical measure, $\forall \theta \in \Theta$, $\langle X,\theta\rangle\approx\mathcal{N}(0,1)$ if $n$ is sufficiently large. 
This very important result for the class of isotropic, convex bodies can indeed be explained by the previous view of moment maps. This was remarkably pointed out by Fathi as a conjecture about "CLTs for random eigenfunctions in positive curvature" in his \textit{Habilitation} thesis, conjecture $2$ in \cite{hdr}, in order to provide a unified framework for different kinds of (classical) CLTs. 
\bigbreak
Indeed, if we denote $\mu$ as a uniform measure on the isotropic convex body $K$, then if the barycenter of $K$ is at the origin, we know that there exists a moment map $\varphi$ such that $\mu=(\nabla\varphi)_{\sharp}e^{-\varphi}dx$, and considering the weighted Riemannian manifold $(\mathbb{R}^n,\Hess \varphi,e^{-\varphi}dx)$, we know that $\partial_i{\varphi}:=\partial_{e_i}\varphi$ is an eigenvalue of the Laplacian on this manifold, i.e. $\mathcal{L}_{\varphi}(\partial_i{\varphi})=-\partial_i{\varphi}$ (see, for example, Fathi's proof of Theorem $3. 2$ in \cite{mm}), so we can explain Klartag's result by saying that for most directions the pushforward of $e^{-\varphi}$ by $\partial_i \varphi$ is approximately Gaussian (this is also even true for multidimensional vectors if, for example, the convex Body is moreover unconditional, see theorem $1.3$ of Klartag \cite{convex}). The curvature criterion proved by Kolesnikov \cite{Kol}, which ensures that the Ricci curvature is always bounded from below by $1/2$, uniformly, with respect to the dimension $n$ of the ambient Euclidean space, is here a really important feature in Fathi's conjecture, which is motivated by several examples, such as Meckes CLT for the eigfunctions of the Laplacian on the sphere (which has positive constant curvature) \cite{Meckes}. So if Fathi's conjecture about  CLTs for random eigenfunctions in positive curvature is true, then Klartag's theorem would follow.
\newline
It is also important to note that there are other CLTs that exploit this structure: a version of Nualart and Peccati's fourth moment theorem \cite{NP}, or even Ledoux's fourth moment theorem, which generalises this last one for more general structures given by the chaos of a Markov operator \cite{Led}. The main goal of this conjecture is to give a unified geometric framework covering all these different CLTs, and in particular Klartag's central limit theorem for convex bodies \cite{convex}.
\bigbreak
This is the same phenomenon that appears here in the free case, mostly that the free version of this moment measure is such that if $Y$ has the free Gibbs law with potential $\frac{1}{2}\lvert X\rvert^2+V(X)$, then the moment measure $Y+\mathscr{D}_iV(Y)$, which is the free Gibbs law of the potential $\frac{1}{2}\lvert X\rvert^2+V(X)$, has most of its marginals approximately semicircular, i.e. e.g. $\forall i=1,\ldots,n,\: Y_i+\mathscr{D}_iV(Y)\approx S\simeq \sigma$ and most of the coordinates are "\textit{\textbf{asymptotically free}}" from each other: this is the most important remark as it is well known (see corollary $6.7$ \cite{Ventro} of Voiculescu) that freeness and regularity, i.e. a free family of random variables $X_1,\ldots,X_n$ generating a $II_1$ factor, necessarily generates a (possibly interpolated) free group factor. In fact according to Klartag, we can rephrase the convexity as being almost as good as independence (here free independence). Furthermore, we conjecture that the Ricci curvature defined in the same way as in \ref{def7} (but in a multidimensional setting), is bounded from below by $1/2$. This seems to be the reason why, heuristically, for $n$ large enough, we have the isormorphism between these perturbed semicircular von Neumann algebras generated by the $X_i=Y_i+\mathscr{D}_iV(Y), i=1,\ldots, n\leq \infty$ and the free group factor $W^*(S_1,\ldots,S_n)\simeq L(\mathbb{F}_n)$ (this even seems to explain the remarkable non-commutative triangular transport discovered by Jekel \cite{jektriang}). If the construction of moment maps could be extended to more general potentials (and not just the even perturbative case), this would also explain why the isomorphism still holds in the uniformly convex setting, as proved by Dabrowski, Guionnet and Shlyakhtenko \cite{YGS}, at least for a large number of generators.
\end{flushleft}

\begin{flushleft}
All this remarks gives a strong hint that the conjecture of Fathi and Nelson \cite{FN} may be true.
In the following we denote for $a,b\in \mathcal{M}$, a von Neumann algebra equipped with a faithful normal state (not necessarily tracial), $(a\otimes b^{op})^{\dagger}=b^*\otimes (a^*)^{op}$ and for a matrix $A\in M_n(\mathcal{M}\bar{\otimes} \mathcal{M}^{op})$, we denote $A^{\dagger}\in M_n(\mathcal{M}\bar{\otimes} \mathcal{M}^{op})$ such that $[A^{\dagger}]_{i,j}=[A]_{i,j}^{\dagger}$.
\begin{conjecture}(Fathi, Nelson \cite{FN})
  Let $X=(X_1,\ldots,X_n)$ with joint law $\mu_X$ and $A=A^{\dagger}$ a free Stein kernel with respect to $V_1$, then there exists $\epsilon>0$ such that if $\Sigma^*(X|V_0)<\epsilon$, then there exists a free transport from $\mu_X$ to the semicircular law. In particular, $W^*(X)\hookrightarrow L(\mathbb{F}_n)$.
\end{conjecture}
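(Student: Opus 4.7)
The plan is perturbative, modeled on the free transport construction of Guionnet and Shlyakhtenko \cite{GS}: I would use the free Stein kernel $A$ as the natural linearization, around the identity, of a would-be transport map from the semicircular law onto $\mu_X$, and then upgrade this linearization to a genuine invertible transport by a Banach fixed point argument. The starting observation is that the defining identity $\tau(X\cdot P(X)) = \langle A,[\mathscr{J}P](X)\rangle_{\tau\otimes\tau^{op}}$ says that $A-(1\otimes 1^{op})\otimes I_n$, which is small in $L^2(M_n(W^*(X)\bar{\otimes} W^*(X)^{op}))$ by assumption, measures the defect of $X$ from being a standard semicircular system; moreover by Theorem \ref{multivstein}, if $X = \mathscr{D}U(S)$ for some convex $U$ then $[\mathscr{J}\mathscr{D}U](\mathscr{D}G(X))$ is itself a free Stein kernel, so heuristically the Stein kernel \emph{is} the Jacobian of the sought transport, seen from the target side.

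The second step is to look for $U = V_1 + V$ with $V$ in some Banach algebra $\mathscr{P}^{(R)}$ of non-commutative power series, $\|V\|_R$ small, such that $\mathscr{D}U(S) \stackrel{\mathrm{law}}{=} X$, by solving the Guionnet--Shlyakhtenko free Monge--Amp\`ere equation (Lemma 3.3 of \cite{GS}) with the Stein kernel supplying the initial ansatz $\mathscr{J}\mathscr{D}U \approx A$. Concretely, I would run the same contraction mapping scheme as in \cite{GS} (or the closely related one of Bahr and Boschert used in Theorem \ref{th11}), but centered at $A$ rather than at the identity, so that only the residual $A - (1\otimes 1^{op})\otimes I_n$ has to be inverted. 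The symmetry hypothesis $A = A^{\dagger}$ is exactly what guarantees that the resulting candidate $U$ can be chosen self-adjoint, as needed to recognize it as a potential. Once an invertible $\mathscr{D}U$ of this form is produced, its compositional inverse $G = (\mathscr{D}U)^{\circ -1}$ is an analytic transport from $\mu_X$ to the free semicircular system, and the inclusion $W^*(X) \hookrightarrow W^*(S_1,\ldots,S_n) \simeq L(\mathbb{F}_n)$ follows exactly as in \cite{GS}.

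The hard part, and the reason this remains a conjecture, is bridging the regularity gap between the $L^2$ hypothesis $\Sigma^*(X|V_1) < \varepsilon$ and the analytic Banach algebra norm $\|\cdot\|_R$ required to make any contraction argument close. A Stein kernel \emph{a priori} lives only in $L^2(M_n(W^*(X)\bar{\otimes} W^*(X)^{op}))$; there is no free analogue of Cafarelli/Klartag regularity that would extract analytic or even operator norm control of $A$ from smallness of the Stein discrepancy alone, and in general the $L^2$-optimal $A$ need not even correspond to the Jacobian of a non-commutative power series. A natural intermediate target, which is plausibly equivalent to the full conjecture, is therefore a \emph{free regularity lemma} of the form: if $\Sigma^*(X|V_1)$ is small enough, then some $L^2$-optimal symmetric Stein kernel admits an analytic representative with small $\|A - (1\otimes 1^{op})\otimes I_n\|_{R}$ in $M_n(\mathscr{P}^{(R)}\hat{\otimes} \mathscr{P}^{(R)})$ for some $R > \max_i\|X_i\|$. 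Once this analytic upgrade is available, the Bahr--Boschert/Guionnet--Shlyakhtenko machinery described in Section 6 closes the argument and yields the embedding into $L(\mathbb{F}_n)$; absent it, only the weaker transport-distance estimate of the corollary after Theorem \ref{multivstein} can be extracted.
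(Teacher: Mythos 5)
This statement is recorded in the paper as a conjecture of Fathi and Nelson \cite{FN}; the paper gives no proof of it, and your submission does not close it either, as you yourself acknowledge. Your outline --- read the Stein identity as saying that $A-(1\otimes 1^{op})\otimes I_n$ is the linearized defect of a hoped-for transport, then run the Guionnet--Shlyakhtenko / Bahr--Boschert fixed point (Lemma 3.3 of \cite{GS}, Theorem \ref{th11}) ``centered at $A$,'' and deduce $W^*(X)\hookrightarrow L(\mathbb{F}_n)$ from invertibility of $\mathscr{D}U$ --- is the natural perturbative strategy and is consistent with the heuristics of Theorem \ref{multivstein} and Section 6 of the paper (your silent reading of the hypothesis as $\Sigma^*(X|V_1)<\epsilon$ is also the correct one; $V_0$ in the statement is a slip). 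But what you have produced is a reduction of the conjecture to an unproved ``free regularity lemma,'' not a proof: the hypothesis only controls $A-(1\otimes 1^{op})\otimes I_n$ in $L^2(M_n(\mathcal{M}\bar\otimes\mathcal{M}^{op}),(\tau\otimes\tau^{op})\circ\Tr)$, whereas every known contraction scheme requires smallness in the analytic norm $\lVert\cdot\rVert_R$ for some $R>\lVert X\rVert_\infty$, and no free analogue of the Caffarelli/Klartag regularity theory that would bridge this is available. That bridge is essentially the entire content of the conjecture.

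Beyond the gap you name, two steps of your plan would not go through as stated. First, even granting an analytic representative of $A$ close to the identity, a self-adjoint element $A=A^{\dagger}$ of $M_n(\mathscr{P}^{(R)}\hat{\otimes}\mathscr{P}^{(R)})$ need not be a non-commutative Hessian: your scheme needs a potential $U$ with $[\mathscr{J}\mathscr{D}U](\mathscr{D}G(X))=A$, and this integrability step has no justification --- free Stein kernels are highly non-unique and need not be Jacobians of cyclic gradients, so ``the Stein kernel is the Jacobian of the transport seen from the target side'' is a heuristic, not a usable identity. Second, the fixed point of \cite{GS} (and of \cite{BB}) transports between free Gibbs laws: it requires $\mu_X$ to satisfy a Schwinger--Dyson equation for an analytic potential $W$ with $\lVert W-V_1\rVert_R$ small, whereas smallness of the Stein discrepancy gives no a priori Gibbs structure, and ``centering the contraction at $A$'' is not a well-defined operation in the Banach space of potentials in which that scheme contracts. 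Even the existence of a transport map (as opposed to a coupling realizing the Biane--Voiculescu distance) is delicate in the free setting, cf.\ the duality framework of \cite{JS2}. So your concluding fallback is accurate: with present techniques one only extracts the $W_2$ estimate of the corollary following Theorem \ref{multivstein}, and the conjecture remains open.
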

\end{flushleft}
\section{Remarks and open questions}
    The computations done in section \ref{sect4} and the seminal works of Klartag and Kolesnikov \cite{Klart,KlaKol,Kol2,Kol} seem to suggest that in the free context, and for a centered free Gibbs measure $\mu:=\nu_{u}$ associated with a smooth uniformly convex potential $u:\mathbb{R}\rightarrow \mathbb{R}$ (and thus supported on a compact interval $K$), the good notion of a non-commutative Hessian manifold (at least in dimension one) is given by the structure $M_{n.c}^*:=(K_{\varphi},\mathscr{J}\mathscr{D}\varphi, \nu_{\varphi})$ where $\varphi$ is the moment map and whose Laplacian can be computed explicitly and for which the moment map is always an eigenfunction associated with the eigenvalue $-1$. The pushforward of the free diffusion $(X_t)_{t\geq 0}$, by $\psi'$ where $\psi=\varphi^*$ can thus be interpreted as the (canonical) free Brownian motion on the non-commutative Hessian manifold $M_{n.c}:=(K,\mathscr{J}\mathscr{D}\psi,\mu)$.  Moreover, this formal construction can be seen as a canonical way to deform a free Brownian motion so that its invariant measure is exactly given by the free moment measure $\nu_{\varphi}$. This amounts in the end to adding a drift term to the generator of the free Brownian motion on the manifold $M_{n.c}:=(K,\mathscr{J}\mathscr{D}\psi,\nu_{\varphi})$  (we recall that the global and intrinsic construction via Dirichlet forms define the classical Brownian motion on a closed Riemannian manifold $(M,g)$ as the unique Markov process with half the Laplace-Beltrami operator as its infinitesimal generator). It is also interesting to note that in the classical case in the uniformly convex setting we have exponentially fast convergence to the equilibrium of the SDE \eqref{class}, a statement we didn't investigate here.
\bigbreak
The recent work of Fathi and Mikulincer \cite{FM} provides new insights into how to prove stability estimates for invariant measures of diffusions that may be far from Gaussian. As mentioned above, the (classical) SDE's considered in their paper are generally irregular and not necessarily Lipschitz, but generally belong to some Sobolev spaces. The first authors to overcome these problems and deal with this level of generality in the classical case was Figalli in \cite{Figal} for stochastic differential equations based on Le-Bris and Lions \cite{DLyo} work for Fokker-Plank equations with irregular coefficients. Unfortunately, in the free setting, almost nothing is known in the general context of free SDE's with non {\it operator Lipschitz} coefficients (except for the free SPDE's considered in the seminal work of Dabrowski \cite{DAB14}). It would be of great interest to develop a free analogue of these results, and also to investigate whether a kind of Lusin-Lipschitz property, i.e. maximal estimates for Sobolev functions w.r.t. the Lesbesgue measure, recently extended by Ambrosio, Bru\'{e} and Trevisan \cite{Lusin} when the reference measure is log-concave, might hold in the non-commutative context (see e.g. section 2.2 in \cite{FM} for precise statements).
\bigbreak
In the free context, the lack of geometric interpretations and a sufficiently strong free Riemannian geometry (in particular, it is closely related to understand the evolution of the semigroup acting on a Lipschitz function under the action of the non-commutative {\it carré du champ}, and therefore to prove the free log-Sobolev inequality under an appropriate non-commutative Bakry-Émery condition introduced by Guionnet and Shlyakhtenko in \cite{bak}) seems to be a major obstacle to obtaining such results. This question, as well as the study of the regularity properties of the free transport map, its $L^p$-estimates in some non-commutative Sobolev spaces, and the study of the continuity of the moment measure (most of these statements were proved by Klartag and Kolesnikov \cite{Klart,KlaKol, Kol2}), thus seems much more difficult to obtain in the free setting. Even extending our result in the multidimensional setting beyond the perturbative regime seems to be very complicated and possibly irrelevant. 
\bigbreak
The last question we ask is whether there is a possible free analogue of Fathi's conjecture $2$ in \cite{hdr}. A sub-question of this has already been raised in a previous author's paper \cite{Di}, section $10$, where a possible fourth moment diffusive structure was proposed, which would be a free analog of Ledoux's one. However, it is still very unclear whether there is any structure other than the semicircular one in which the free CLT can hold for random chaotic eigenfunctions. Moreover, the development of the theory of free moment maps in a convex non-perturbative setting could possibly be of great interest to translate some isomorphism problems. We leave all these questions to the interested reader.

\section{Acknowledgments}
CP Diez was partially supported by the Luxembourg National Research Fund
\newline
(Grant: O22/17372844/FraMStA). CP Diez also acknowledges support from  the Ministry of Research, Innovation and Digitalization (Romania), grant CF-194-PNRR-III-C9-2023. 
\newline
The author would like to thank  Pr. Max Fathi, Pr. Ivan Nourdin, Pr. Giovanni Peccati for their encouragement and insightful comments during the preparation of this work.

\end{document}